\newtheorem{Proposition}{Proposition}
  \newtheorem{Remark}[Proposition]{Remark}
  \newtheorem{Corollary}[Proposition]{Corollary}
  \newtheorem{Lemma}[Proposition]{Lemma}
  \newtheorem{Theorem}{Theorem}
 \newtheorem{Definition}[Proposition]{Definition}
 \newtheorem{Note}[Proposition]{Note}
\newcommand {\z}{{\noindent}}
\def\Le{\leqslant}
\def\Ge{\geqslant}
 \def\HH{\mathbb{H}}
\def\CC{\mathbb{C}}
\def\DD{\mathbb{D}}
 \def\RR{\mathbb{R}}
 \def\NN{\mathbb{N}}
\def\ZZ{\mathbb{Z}}
\def\Re{\mathrm{Re\,}}
\def\Im{\mathrm{Im\,}}
\def\JN{ J}
\def\LN{ L}
\def\newD6p{\mathbb{D}_{21/4}^+}
 \def\({\left(} \def\){\right)} \makeindex
\def\ro{{r}}
\def\tcT{{2,3}}
\def\tsT{{2;3}}
\def\tso{{2;1}}
\def\Tco{{3,1}}
\def\sm{{s^{\scriptscriptstyle -}}}
\def\um{{u^{\scriptscriptstyle -}}}
\def\stjm{{\tilde{s}_j}{\negthinspace}^{\scriptscriptstyle -}}
\def\h,{\negthinspace}
\def\z{\noindent}
\def\be{\begin{equation}}
\def\bel{\begin{equation}\label}
\def\ee{\end{equation}}
\def\bd{\begin{Definition}}
\def\ed{\end{Definition}}
\def\bp{\begin{Proposition}}
\def\bpl{\begin{Proposition}\label}
\def\ep{\end{Proposition}}
\def\bl{\begin{Lemma}}
\def\el{\end{Lemma}}
\def\bt{\begin{Theorem}}
\def\et{\end{Theorem}}
\def\Bx{\ $\Box$}
\def\oldCa{\ell}
\def\oldCb{\ell^-}
\def\oldDa{c_1}
\def\oldC0{c_2}
\def\oc1{c_3}
\def\oldc2{c_4}
\def\ogc1{c_5}
\def\olgoc2{c_6}
\def\olhatl{\tilde{l}}
\def\olhatg{\tilde{g}}
\author{O. Costin, R.D. Costin and M. Huang} \address{Mathematics
  Department\\The Ohio State University\\Columbus, OH 43210}
\address{Mathematics Department\\The Ohio State University\\Columbus, OH
  43210} \address{Mathematics Department\\City University of Hong Kong, Hong Kong
} \title%{New method to find connection formulae for $\text{P}_1$}
{A direct method to find Stokes multipliers in closed form for
P1 and more general integrable systems}
\begin{document}
$ $ \vskip -2cm
\gdef\shorttitle{Direct method to find Stokes multipliers in closed form}
\maketitle

%\newpage
%\tableofcontents

\begin{abstract}
We calculate the Stokes multipliers in closed form for
P1 using a direct approach. For this purpose, we introduce a new rigorous method, based on Borel summability and
  asymptotic constants of motion generalizing previous results,
  to analyze singular behavior of nonlinear ODEs in a neighborhood of infinity
  and provide global information about their solutions in $\CC$. In equations with the
  Painlev\'e-Kowalevski (P-K) property (stating that movable singularities are
  not branched) the method allows for solving connection problems.  The analysis is
  carried in detail for P$_1$, $y''=6y^2+z$, for which we find the Stokes
  multipliers in closed form and global asymptotics in $\CC$ for solutions having
  power-like behavior in some direction, in particular for the
  tritronqu\'ees.

  Calculating the Stokes multipliers solely relies on the P-K property and
  does not use linearization techniques such as Riemann-Hilbert or
  isomonodromic reformulations.  %We discuss how the approach would work to
 % calculate connection constants for a larger class of P-K integrable equations.

  We develop methods for finding asymptotic expansions in sectors where
  solutions have infinitely many singularities. These techniques do not rely
  on integrability and apply to more general second order ODEs which, after
  normalization, are asymptotically close to autonomous Hamiltonian systems.
\end{abstract}

\section{Introduction}\label{out}

\subsection{Overview of the paper and motivation}

We provide a new method for studying solutions of nonlinear second
order equations in singular regions, containing singularities which
may accumulate towards infinity.  The method relies on obtaining and
using asymptotically conserved quantities (ACQ); these may not exist
globally, but rather on regions bordered by antistokes lines. These ACQ
can be matched to each other and to ACQ valid in regular regions (bordered by antistokes lines, where solutions are analytic towards infinity). A family of matched ACQ
determines solutions and their behavior, and the use of ACQ as dependent
variables desingularizes the problem.

For analytic ODEs it is known that solutions which are
regular in sectors toward infinity have
asymptotic expansions which are Borel summable \cite{imrn,duke,
  invent}. These expansions are shown here to provide ACQ that match the ACQ of the singular regions. This  approach does not use linearization
such as a Riemann-Hilbert reformulation. For equations with the
Painev\'e-Kowalevski property (P-K) -- stating that all solutions are
single-valued on a {\em common} Riemann surface -- the asymptotically
conserved quantities provide explicit connection formulas using the
P-K property alone due to the fact
that the solution returns to the same asymptotic representation after
a $2\pi$ rotation in a neighborhood of infinity. This yields a
nontrivial equation for the {\em Stokes multiplier}, see \S\ref{MainS}.

In the present paper we carry out this program for the Painlev\'e
equation P$_1$
\begin{equation}\label{p1}
y''= 6y^2 + z
\end{equation}
%We discuss in \S\ref{secp2} how the approach can be applied to the Painlev\'e equation P$_2$.

We obtain the asymptotic behavior of tronque\'e solutions with
exponential accuracy in the pole-free sectors, with $O(z^{-\frac{25}{16}})$ relative
errors in pole regions where the associated asymptotic elliptic functions become
trigonometric ones, and $O(z^{-\frac{15}{8}})$ where the elliptic functions
are nondegenerate,  see Theorem\,\ref{P1p}, \S\ref{sec1} and
\S\ref{tro1} below. This precision  exceeds the one needed to keep track
of the Stokes multipliers and then to determine them based on the single-valued consistency mentioned above.

Until now the Stokes multiplier has been calculated using linearization methods;
one interest in developing an alternative approach
is that while there is no known systematic method of generating an associated
Riemann-Hilbert problem from the P-K property, establishing the P-K property is generally a much easier problem.

\subsection{A brief overview of integrability, linearization, the R-H problem and  connection formulas}\label{IlRc}

For more details we refer the reader to the surveys by Clarkson \cite{Clarkson}  and   Joshi \cite{Joshi}.
Eq. \eqref{p1} is the first of the six Painlev\'e equations. These, together
with equations reducible to equations of classical functions,
constitute  the complete set of differential
equations of degree at most two, in a quite general class \cite{Ince}, that are P-K integrable
\cite{Boutroux1,Boutroux2,Clarkson}, modulo  equivalences.

In the realm of linear differential equations, the classical special functions such
as the Airy, Bessel or hypergeometric ones, play an important role
due the existence of integral representations, allowing in particular
for a global description: an integral formula allows for explicitly
linking the behavior of one solution at various critical points, and
along different directions at infinity. These links are {\em
  connection problems} and their solutions are {\em connection
  formulae}. Until the late 1970s integral formulas were essentially
the only tools in solving connection problems. The general solution of \eqref{p1} is highly transcendental; in
particular there are no integral representations in terms of simpler
functions.  This is a deep result with a long history starting with
a partial proof by Painlev\'e himself, and a complete argument due to Umemura,
\cite{Umemura2}, \cite{Umemura3}.

However, all six Painlev\'e equations turned out to have explicit
connection formulas. These have been obtained over a span of about two decades
starting in the late 70s, after the discovery of linearization methods, cf. the fundamental papers
by Ablowitz and Segur \cite{Ablowitz}, McCoy, Wu and Tracy \cite{McCoy}; for a good survey of the vast literature see the papers by Clarkson \cite{Clarkson} and by  Fokas,  Its,  Kapaev  and Novokshenov  \cite{Fokas}. Linearization techniques fall in some sense under the Riemann-Hilbert
(RH) reformulation umbrella, \cite{Fokas}. The six
Painlev\'e equations were shown in the late last century to be RH
compatibility equations, and the P-K property follows from this
presentation.

For the Painlev\'e transcendent $\text{P}_1$ the Stokes multiplier was
obtained via linearization in 1988 by Kapaev \cite{Kapaev}, corrected in a
1993 paper by Kitaev-Kapaev \cite{KitaevKapaev}. The Painlev\'e transcendents
are now as important in nonlinear mathematical physics as the classical
special functions are in linear mathematical physics \cite{Fokas}.

Conversely, knowing the jumps across the cuts determines the associated
RH problem. For a differential equation the jumps follow from  its connection formulas. The circle is closed once the P-K property
is shown to determine the jump conditions in closed form.

\subsection{Definitions,  setting and general properties of P$_1$}\label{sec1}
\subsubsection{Normalization}
It is convenient to normalize \eqref{p1} as described in \cite{invent}.
The change of variables
\begin{equation}
  \label{chvarx}
  z={24}^{-1}{30^{4/5}}x^{4/5}e^{-\pi i /5};\ y(z)=i\sqrt{z/6}(1-\tfrac{4}{25}x^{-2}+h(x))
\end{equation}
(the branch of the square root is positive for
$z>0$) brings \eqref{p1} to the Boutroux-like form
\begin{equation}
  \label{eq:eqp}
h''+\frac{h'}{x}-h-\frac{h^2}{2}-\frac{392}{625}\,\frac{1}{ x^4}=0
\end{equation}
\subsubsection{Symmetries}\label{sym}
Eq.  \eqref{p1} has a five-fold symmetry: if $y(z)$ solves \eqref{p1}, then so
does  $\rho^2y(\rho z)$ if $\rho^5=1$.  Relatedly,
\eqref{eq:eqp} is invariant under the transformations $h(x)\mapsto h(xe^{\pm
      i\pi})$
and note also the symmetry $h(x)\mapsto\overline{h(\overline{x})}$.

\subsubsection{Regularity}\label{tro1} There are five special directions of \eqref{p1} for solutions having asymptotic power series in some sectors (see Note \ref{StokesAz} and \S\ref{Overmu}). Bordered by these directions, we have the sectors
\begin{equation}
  \label{eq:sectorsz}
  S_k=\left\{z\in\CC\, \Big|\, \frac{2k-1}{5}\pi
<\arg z<\frac{2 k+1}{5}\pi \right\},\ k\in\ZZ_5
\end{equation}

{\em Tronqu\'ees and tritronqu\'ees solutions}. Generic solutions
have poles accumulating at $\infty$ in all $S_k$.
Any solution has poles in at least one $S_k$ \cite{Kitaev}. For any
{\em two} adjacent sectors $S_k$ there is a one-parameter family of
solutions, called {\em{tronqu\'ees}} solutions, with the behavior
$y=\pm i\sqrt{\frac{z}{6}}(1+o(1))$ as $z\to\infty$ in both
sectors (so they do not have poles for large $z$ in two sectors). In particular, for any set of {\em four} adjacent sectors there is
exactly one solution with this behavior, see \cite{Kapaev-2004}, \cite{Masoero};
these particular tronqu\'ees solutions which are maximally regular
solutions are called {\em tritronqu\'ees}. The five tritronqu\'ees
are obtained from each other via the five-fold symmetry.

We will study the tritronque\'e  $y_t$ with
\begin{equation}\label{sectorinz}
y_t(z)=i\sqrt{\frac{z}{6}}\,\left(1+o(1)\right)\ \text{as $|z|\to \infty$ with }\arg z\in
  \left(-\frac{3\pi}{5},\pi\right)
\end{equation}
In the  normalization \eqref{eq:eqp}, a sector $S_k$  in $z$ corresponds to a quadrant
in $x$  and the sector $-\pi<{\rm{arg}}\ z\leq \pi$ corresponds to the sector $-\pi<{\rm{arg}}\ x\leq 3\pi/2$. The solution $h_t$ of \eqref{eq:eqp}
corresponding to $y_t$ satisfies
\begin{equation}\label{sectorinx}
h_t(x)=o(1)\ \text{as $x\to \infty$ with }\arg x\in \left[-\frac{\pi}{2},\frac{3\pi}{2}\right]
\end{equation}
 is analytic for
large $x$ in the sector \eqref{sectorinx} and has arrays of  poles beyond its
edges,
see \S\ref{firstpoles}.
General results about the solutions of \eqref{eq:eqp} which decay in some
direction at infinity, which correspond to tronqu\'ee of \eqref{p1},
are overviewed in \cite{cch}.

\section{Main results in sectors of analyticity}\label{MainS}
\subsection{The Stokes constants for  tronqu\'ee solutions}
Theorem\,\ref{P1p} gives the value of the {\em Stokes
   multiplier}  $\mu$ for any tronqu\'ee solution. While it is formulated for
 solutions analytic in the sector $S_0$, it can be easily  adapted  to
 solutions analytic  in any of the sectors in \eqref{eq:sectorsz}.
\begin{Theorem}\label{P1p} Let $h$ be a solution of  \eqref{eq:eqp} satisfying
  \begin{equation}
  \label{eq:eqsm}
  h(x)=Ce^{-x}x^{-\frac12}+o(x^{-1/2}) \ \ \text{ as } \ x\to e^{i\alpha}\infty\ \text{for all}\  \alpha\in [0,\tfrac12 \pi],\ \ \ \ \
\end{equation}
Then
\begin{equation}
  \label{eq:eqsm2}
 h(x)
  =(C+\mu) x^{-\frac12}e^{-x}+o(x^{-\frac12}) \ \ \text{ as }\  x\to e^{i\alpha}\infty\   \text{for all}\   \alpha\in [-\tfrac12 \pi,0] ,\ \ \ \ \
  \end{equation}
with
\begin{equation}
  \label{val_mu}
    \mu=\sqrt{\frac{6}{5 \pi }}\, i
\end{equation}
\end{Theorem}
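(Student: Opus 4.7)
The plan is to derive $\mu$ from the Painlev\'e--Kowalevski single-valuedness of $h$ in a punctured neighborhood of $\infty$, combined with Borel summability of the formal transseries of \eqref{eq:eqp}. The starting point is the transseries representation
\[
h(x)\sim \tilde h_0(x)+C\,x^{-1/2}e^{-x}\bigl(1+O(x^{-1})\bigr),
\]
valid in any sector of analyticity, where $\tilde h_0$ is the unique formal power series solution of \eqref{eq:eqp} and $C$ is the sector-dependent transseries parameter. The theorem asserts that $C$ jumps by an $h$-independent constant $\mu$ as $\arg x$ decreases through the Stokes direction $\arg x=0$, and evaluates that constant explicitly.

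First I would make the transseries structure precise on both sides of $\arg x=0$. By the nonlinear Stokes theory for rank-one ODEs developed in \cite{duke,invent,imrn}, the Borel sum of the transseries is an honest solution, and the transseries parameter receives an additive shift by a quantity $\mu$ depending only on \eqref{eq:eqp} (and not on $C$) as we cross the Stokes ray. Thus the content of the theorem reduces to identifying that shift and computing it; the linear dependence on $C$ in \eqref{eq:eqsm2} is then automatic from the linearization of the equation around $\tilde h_0$.

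To pin down the numerical value of $\mu$ I would exploit the P-K single-valuedness of $h$ under a full rotation $x\mapsto e^{2\pi i}x$ together with the symmetries of \S\ref{sym}. Starting from the tronqu\'ee sector with parameter $C_0$, continue the solution around $\infty$ by passing through the singular sectors described in \S\ref{tro1} and the Appendix \S\ref{nf}: there the solution is modelled by an elliptic function whose moduli play the role of asymptotically conserved quantities, and these moduli are pinned down on each antistokes boundary by matching to the transseries parameter of the adjacent regular sector. Across each Stokes ray the transseries parameter receives an additive $\mu$-contribution, with phases prescribed by the symmetries $h(x)\mapsto h(xe^{\pm i\pi})$ and $h(x)\mapsto\overline{h(\overline{x})}$, while across each singular sector the conserved moduli undergo an explicit transformation. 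Requiring that the composition of all these maps along a full $2\pi$ loop be the identity yields a scalar equation whose unique solution is $\mu=i\sqrt{6/(5\pi)}$.

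The main obstacle is the matching at the boundary between a regular and a singular sector with enough precision to resolve the $\mu$-correction, which is of order $e^{-x}x^{-1/2}$, against the oscillatory background of the elliptic approximation; this is exactly the reason the paper constructs the $O(x^{-25/16})$ and $O(x^{-15/8})$ asymptotics announced in the introduction. Constructing the asymptotically conserved quantities in the pole sectors and showing that they glue continuously onto the transseries parameter $C$ across the antistokes rays, with errors much smaller than $e^{-x}x^{-1/2}$, is what the bulk of the subsequent sections must establish. Once that gluing is in hand, the determination of $\mu$ follows algebraically from the closed-loop P-K condition and the five-fold symmetry of \eqref{p1}.
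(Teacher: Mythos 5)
Your outline coincides with the paper's strategy: existence of the additive, $C$-independent Stokes constant $\mu$ is imported from the Borel-summation theory of \cite{imrn,duke}, and its value is computed by continuing the tritronqu\'ee through its single pole sector via asymptotically conserved quantities (in the paper, $\mathcal{Q}=xJ(s)$ and $\mathcal{K}=\hat J/J$), matching these onto the transseries representation at the bordering antistokes directions, and imposing Painlev\'e--Kowalevski single-valuedness to close the system. Two small corrections to your framing, neither of which affects the viability of the approach: the single-valuedness statement is \eqref{eq:mer1} in the original variable $z$, so the loop in the normalized variable $x$ subtends $5\pi/2$ rather than $2\pi$, and the identification at its endpoints is the relation \eqref{hpm}, not $h(xe^{2\pi i})=h(x)$; and the five-fold symmetry of \eqref{p1} enters only in the normalization (fixing a symmetry sector and relating the five tritronqu\'ees), not in the final algebraic equation \eqref{stok2} that determines $\mu$, which follows from single-valuedness alone.
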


Of course, for $ |\alpha|\in [0,\tfrac12 \pi)$ relation \eqref{eq:eqsm} simply means that $h(x)=o(x^{-1/2})$, trivially true since in this region $h\sim -\frac{392}{625 x^4}$.

 The {\em existence} a Stokes multiplier such that \eqref{eq:eqsm} and
\eqref{eq:eqsm2} hold is known in a wide class of differential equations, see
in \cite{imrn} formula following (1.15) and also see in \cite{duke},
\eqref{eq:sjump}. A complete Borel summed expansion of the solutions $h$ satisfying (\ref{eq:eqsm}), (\ref{eq:eqsm2}) is given in  \cite{cch}-- Theorem 2 and (55)-- where $C_+=C,C_-=C+\mu$.

On the other hand, the existence of an {\em{explicit expression}} for $\mu$ is expected only in special cases such as
integrable equations. The value \eqref{val_mu} was calculated before using Riemann-Hilbert
associated problems, as mentioned in \S\ref{IlRc}.
 In the present paper,  \eqref{val_mu} will follow from more general asymptotic formulas we obtain by
 matching Borel summed expansions valid in the regular sector to asymptotic
 constants of motion explained in \S\ref{KAM}, which are shown to give suitable representations in the sectors with singularities.

In particular, for the tritronqu\'ee $h_t$ obtained from $y_t$ via \eqref{chvarx}, Theorem\,\ref{P1p} gives

\begin{Proposition}\label{sto}
 The tritronqu\'ee $h_t$ defined by \eqref{sectorinx} satisfies
 $$h_t(x)=O(x^{-4})\text{ as } x\to +i\infty$$
  (implying $C=0$ in \eqref{eq:eqsm}) and, with  $\mu$ given by \eqref{val_mu},
$$h_t(x)=\mu x^{-\frac12}e^{-x}(1+o(1))\text{ as } x\to -i\infty$$
\end{Proposition}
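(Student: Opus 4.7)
The strategy is to apply Theorem~\ref{P1p} to $h_t$ and show that the Stokes constant $C$ appearing in \eqref{eq:eqsm} equals zero for the tritronqu\'ee. The essential input is that $h_t$ is regular and $o(1)$ in the wider sector $\arg x\in[-\pi/2,3\pi/2]$ per \eqref{sectorinx}, a full $2\pi$-sector, much larger than the quadrant $[0,\pi/2]$ demanded by the hypothesis of Theorem~\ref{P1p}. Both conclusions of the Proposition will follow once $C=0$ is established.

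To see $C=0$: on the open upper half-plane $\arg x\in(0,\pi)$ the transseries representation of $h_t$ has a well-defined coefficient of the mode $e^{-x}x^{-1/2}$, namely $C$, and this coefficient is constant throughout that half-plane because no Stokes line is crossed there -- the Stokes lines for the two exponentially small modes $e^{\mp x}$ sit at $\arg x=0$ and $\arg x=\pi$, while the antistokes line $\arg x=\pi/2$ does not alter transseries constants. In the sub-sector $\arg x\in(\pi/2,\pi)$, which still lies inside the tritronqu\'ee's sector of regularity, $e^{-x}$ is exponentially large; so if $C\neq 0$ the term $Ce^{-x}x^{-1/2}$ would blow up as $x\to\infty$ along such a direction, contradicting $h_t=o(1)$. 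Hence $C=0$, and Theorem~\ref{P1p} then yields $h_t(x)=\mu\,x^{-1/2}e^{-x}(1+o(1))$ as $x\to-i\infty$ with $\mu$ given by \eqref{val_mu}.

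The sharper $O(x^{-4})$ bound as $x\to+i\infty$ requires one more step: an analogous argument for the conjugate mode $e^{+x}$ shows that its transseries coefficient in the upper half-plane must also vanish, since $e^{+x}$ is exponentially large on $\arg x\in(0,\pi/2)$ where $h_t=o(1)$. With both exponentially small transseries modes absent, $h_t$ in the upper half-plane reduces to the Borel sum of the formal power series solution $\tilde h_0$ of \eqref{eq:eqp} (the complete Borel summed expansion of such solutions is given in the Appendix \S\ref{appendix}). A dominant-balance computation in \eqref{eq:eqp}, using $h=a\,x^{-4}+O(x^{-8})$, forces $a=-\tfrac{392}{625}$, so $\tilde h_0(x)=-\tfrac{392}{625}x^{-4}+O(x^{-8})$ and hence $h_t(x)=O(x^{-4})$.

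The main obstacle is the transseries bookkeeping behind the claim $C=0$: one must rigorously justify that the coefficient of $e^{-x}x^{-1/2}$ is genuinely constant across the antistokes line $\arg x=\pi/2$, and that $o(1)$-behavior in an exponentially-expanding direction forces the corresponding coefficient to vanish. This relies on the Borel summability framework of \cite{imrn, duke, invent} and the explicit transseries representation supplied in the Appendix; the argument itself is a short corollary of that machinery, so the bulk of the work has been done upstream in proving Theorem~\ref{P1p}.
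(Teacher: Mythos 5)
Your treatment of the first claim --- that $C=0$ follows from the wide sector of regularity, and that with both exponential modes absent the solution reduces to the Borel sum of the power series, giving $h_t(x)=O(x^{-4})$ as $x\to+i\infty$ --- is sound and aligns with what the paper takes more or less for granted in \S\ref{firstpoles} (where it is stated that $C_-=\mu$ for $h_t$, hence $C_+=0$) and in \S\ref{Overmu}. A small caveat: the precise mechanism the paper uses is that a nonzero $C$ forces an array of poles near the antistokes line (cf.\ \eqref{eq:eq51}--\eqref{eq:pospoles}), rather than raw unbounded growth of the single leading exponential, but the conclusion $C=0$ is the same and your informal phrasing captures it.

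The second claim --- that $\mu=\sqrt{6/(5\pi)}\,i$ --- is where your proof becomes circular. You obtain this value by ``applying Theorem~\ref{P1p},'' but the explicit value \eqref{val_mu} asserted in Theorem~\ref{P1p} is exactly what the rigorous content of Sections~3 through 6 of the paper is constructed to establish, and Proposition~\ref{sto} is the vehicle through which it is established. What is genuinely citable from \cite{imrn,duke} is the \emph{existence} of a Stokes constant with the structure of Theorem~\ref{P1p}, but not its numerical value; that must be computed, and computing it is the whole point. The paper's proof of this Proposition, given at the end of \S\ref{find}, is a lengthy chain: it invokes the explicit asymptotics of the discrete constants of motion (Proposition~\ref{c123}), the evaluation of $x_0J_0$ in terms of $\mu$ from matching the tritronqu\'ee to the transseries at the edge of the regular sector (Proposition~\ref{x0j0tt}), the fact that after $N_m$ loops $x_{N_m}$ arrives in the opposite transseries region (Proposition~\ref{reach}), explicit evaluations of $\int Q(u_0,s)J(s)\,ds$ and $\int Q(u_0,s)\hat J(s)\,ds$ (Lemma~\ref{qjqj}), and the closed forms of $B_n$, $\tilde B_n$ (Lemma~\ref{xjed}); all of this is substituted into the single-valuedness constraint \eqref{hpm}, reformulated as \eqref{stok2}, which is then solved for $\mu$. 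None of this computation appears in your proposal. You have the logical dependence reversed: Proposition~\ref{sto} supplies the value that makes Theorem~\ref{P1p} quantitative, not the other way around.
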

 The fact that $\mu$ is the same for all tronqu\'ees including the tritronqu\'ee is general - see in \cite{imrn} eq. (1.19) and more generally in \cite{duke} eq. (1.27). It is clear from Proposition \ref{sto} and Theorem \ref{P1p} that it suffices to obtain $\mu$
for $h_t$.
The proof of Proposition\,\ref{sto} is given in Section \ref{find}; the
equation that $\mu$ solves is \eqref{stok2}.

\subsection{Sectors with singularities. Setting and heuristics.}\label{KAM}

\subsubsection{Arrays of poles near regular sectors of tronqu\'ees solutions}\label{firstpoles}
Solutions $h$ satisfying \eqref{eq:eqsm} are analytic for large $x$ in the
right half plane. Beyond the edges of the sector  $-\pi/2\leqslant{\rm{arg}}\,
x\leqslant\pi/2$, $h$ develops arrays of poles (unless $h$ is
tritronqu\'ee). These facts are proved, together with the location of the
first few arrays of  singularities, in \cite{invent} and \cite{cch} and
are overviewed below.

Given $h$ as in Theorem \ref{P1p} there is a unique constant $C_+$ with the
following properties. Denoting  $\xi=\xi(x)=C_+x^{-1/2}e^{-x}$ the leading
behavior of $h$  for large $|x|$ with ${\rm{arg}}\, x$ close to $\pi/2$ is
 \begin{equation}
   \label{eq:eq51}
   h\sim H_0(\xi)+\frac{H_{1}(\xi)}{x}+\frac{H_{2}(\xi)}{x^2}+\cdots\ \ (x\to i\infty \text{ with }|\xi-12|>\epsilon,\ |\xi|<M)
 \end{equation}
(if $\xi$ is small, the terms
may need to be reordered) where
\begin{equation}\label{eq:f11}
 H_0(\xi)= \tfrac{\xi}{(\xi/12-1)^2},\, H_{1}(\xi)=\tfrac{
-\frac{1}{60}\xi^4+3\xi^3+210\xi^2
+216\xi}{(12-\xi)^3},\ldots,\, H_{n}(\xi)=\tfrac{P_n(\xi)}{\xi^n(\xi-12)^{n+2}}
\end{equation}
with $P_n$ polynomials of degree $3n+2$. \footnote{ These can be obtained by substituting the expansion \eqref{eq:eq51} in \eqref{eq:eqp}, treating $x$ and $\xi$ as independent variables, and solving order by order in $1/x$.}

The first array of poles beyond $i\RR^+$ is located at points $x=p_n$ near the
solutions $\tilde{p}_n$ of the equation $\xi(x)=12$, namely
\begin{equation}
  \label{eq:pospoles}
  p_n=\tilde{p}_n+o(1)=2n\pi i -\frac{1}{2}\ln(2n\pi i)+\ln C_+-\ln\, 12+o(1),\ \ \ \ (n\to\infty)
\end{equation}
Rotating $x$ further into the
second quadrant, $h$ develops
successive arrays of poles   separated by distances
$O(\ln x)$ of each other as long as $\arg(x)=\pi/2+o(1)$ \cite{invent}.
\begin{Note}
  {\em  The array of poles developed near the other edge of the sector of
    analyticity, for $\arg(x)=-\pi/2+o(1)$, is obtained by the conjugation
    symmetry in \S\ref{sym}: in  \eqref{eq:eq51} and \eqref{eq:pospoles}
    $i$ is
    replaced by $-i$ and $C_+$ by a different constant, still unique,
    $C_-$.  The tritronqu\'ee $h_t$ has the sector of analyticity as in
    \eqref{sectorinx}; $h_t$ has an array of poles for
    $\arg(x)=-\pi/2+o(1)$. For more details see \cite{cch}. }
\end{Note}

\subsubsection{Sectors with poles. Setting and heuristics} As
mentioned, the general solution of P$_1$ has poles in any sector in $\CC$, and
any solution has at least a sector of width $2\pi/5$ with singularities.  In particular, any truncated solution $h$ as in Theorem\,\ref{P1p} has poles outside the sector $-\pi/2\leq{\rm arg }\,x\leq \pi/2$ and $h_t$ has poles outside the sector $-\pi/2\leq {\rm arg }\,x\leq 3\pi/2$, in particular $h_t$ has poles for  $x$ in the sector
 \begin{equation}\label{def-Sigma}
  \Sigma=\{x\, |\, -\pi<{\rm{arg}}\, x<-\pi/2\}
  \end{equation}

The heuristics of the approach in the present paper are given in detail in \cite{cch}, see esp. \S 3.2. As
it is often the case, rigorous arguments are more involved and sometimes
depart from the heuristic ideas. These are given in \S\ref{xjc}. With $u:=h$,
\vspace{-0.25cm}\begin{equation}
  \label{def_s}
  s={h'}^2-h^2-h^3/3
\end{equation}\vspace{-0.25cm}
and
\vspace{-0.25cm}\begin{equation}\label{def_R}
R(u,s)=\sqrt{u^3/3+u^2+s}
\end{equation}\vspace{-0.25cm}
equation (\ref{eq:eqp}) can be rewritten as a system
\begin{align}
  \label{eq:eqds2n}
 & s(u)=s_n-2\int_{u_n}^u \(\frac{R(v,s(v))}{x(v)}-\frac{392}{625}\frac{1}{x(v)^4}\)dv \\
 & x(u)=x_n+\int_{u_n}^u \frac{1}{R(v,s(v))}dv \label{eq:eqdx2n}
\end{align}
where the integrals are along a closed curve $\mathcal{C}$ (see Note \ref{chooseC}), we write $u_n$ to denote that $u$ has traveled $n$
times along $\mathcal{C}$, and $s_n = s(u_n),~ x_n = x(u_n)$. See also \S\ref{return_map}.

The functions
\begin{equation}
  \label{eq:eqJL}
    \JN (s):=\oint _\mathcal{C} R(v,s)\,dv;\ \ \LN(s):=\oint_\mathcal{C} \,\frac{dv}{R(v,s)}
\end{equation}
satisfy the equations\vspace{-0.25cm}
\begin{equation}
  \label{eq:difeq00}
 \JN\,''+\frac{1}{4}\rho(s)\JN=0;\ \ \text{where}\ \  \rho(s)=\frac {5}{3s \left( 3\,s+4 \right) }
\end{equation}\vspace{-0.25cm}
and\vspace{-0.25cm}
\begin{equation}
  \label{eq:difeq01}
  \LN\, ''-\frac{\rho'(s)}{\rho(s)}\LN\, '+\frac{1}{4}\rho(s)\LN=0
\end{equation}\vspace{-0.25cm}

A natural procedure relying on the Poincar\'e return map leads to two candidates for asymptotically conserved quantities, see \cite{cch},
\begin{equation}
  \label{eq:eqapprx}
  \mathcal{Q}(x,s):= x\JN(s)= x_0\JN(s_0)\, \left(1+o(1)\right)
\end{equation}
and
\begin{equation}\label{eq:2ndc}
 \mathcal{K}(s)+\frac{2n}{\kappa_0\, x_0\JN(s_0)}=\mathcal{K}(s_0)+o(1)
\end{equation}
for $n=O(x_0)$, where
\begin{equation}\label{defK}
\mathcal{K}(s):=\kappa_0\int_{0}^s\frac{ds}{\JN(s)^2}
=\frac{\hat{J}(s)}{\JN(s)}
\end{equation} $\hat{J}$ is an independent solution of \eqref{eq:difeq00} with $\hat{J}(0)=0$, and $\kappa_0$ is
the Wronskian of $\JN$ and $\hat{J}$.

\begin{Note}\label{nsing}
  {\em   Near the edge
$\arg x =-\pi/2-o(1)$ of $\Sigma$ (or near the other edge, for $\arg x =-\pi+o(1)$) special care is
needed for the truncated solutions since $s=o(1/x)$ (respectively, $s=-\frac43+o(1/x)$), so $s$ is near
singularities.}
\end{Note}

\begin{Note}\label{NewNote}
 {\em One gets higher orders in the asymptotic expansions of $(x_n,s_n)$ by formal
Picard iterations, using \eqref{eq:eqapprx} and \eqref{eq:2ndc} in
\eqref{eq:eqdx2n} and \eqref{eq:eqds2n}; these lead, after inversion of
\eqref{eq:eqdx2n} and \eqref{eq:eqds2n}, to an asymptotic expansion of $h(x)$
and $h'(x)$. This is quite straightforward and fairly short, but a formal calculation will introduce uncontrolled errors, and a good part of the technical
sections of the paper deals with rigorizing the analysis.}
\end{Note}

\subsection{Calculating $\mu$} Let $h=h_t$.
After having obtained and proved the
asymptotic expansions of $h$ and $h'$ as in Note \ref{NewNote}, we match them to
expansions  of type
\eqref{eq:eq51}, as explained in \S\ref{firstpoles}, of $h$ and $h'$ when $\arg x$ decreases below $-\pi$.  The matching, relying on the  single-valuedness of $y(z)$ which entails a consistency condition of the asymptotics
translates into an equation for $\mu$, explained in \S\ref{find}  (in particular, see Note \ref{method} and \eqref{stok2}).  Since $h_t$ is a tritronqu\'ee, a unique solution regular for $\arg
x\in (\pi/2,-\pi/2)\cup (-\pi,-3\pi/2)$, not surprisingly, the equation for
$\mu$ has a unique solution ($\mu=\sqrt{\frac{6}{5 \pi }}i$, see \S\ref{find} for details).

\begin{Note}
The validity of our method does not rely on integrability. For instance, if we drop the term $-\frac{392}{625 x^4}$ in \eqref{eq:eqp}, the
equation becomes Painlev\'e nonintegrable, but our asymptotic expansion
of $(s_n,x_n)$ does not change to the order used. However, for such nonintegrable equations, $\mu$ cannot be identified as the Stokes
multiplier, since analytic continuations would involve different Riemann sheets and no obvious matching would be possible.
%Application of our methods to more general equations including P$_2$ are discussed in \S\ref{secp2}.
\end{Note}
\section{Main results about the singular sectors}\label{singreg}
\subsection{Asymptotic expansions of solutions}
We start with some fixed $u_0,s_0,x_0$  and
as $u$ travels along $\mathcal{C}$ back to $u_0$ we obtain $s_1,x_1$, and
repeating this procedure we get $s_2,x_2,$ etc. (see also \eqref{eq:eqds2n} and \eqref{eq:eqdx2n} and comments following these equations) In this way
\eqref{eq:eqds2n} and \eqref{eq:eqdx2n} provide a recurrence relation
describing the evolution of $s$ and $x$ as $u$ goes along $\mathcal{C}$.

Theorem\,\ref{recu1} provides asymptotic expansions of the asymptotically conserved quantities \eqref{eq:eqapprx}, \eqref{eq:2ndc} of the system \eqref{eq:eqds2n}, \eqref{eq:eqdx2n}, up to $O(x^{-5/4})$ or better.

It turns out (Lemma \ref{Consistency1})  that for $h=h_t$ the $s_n$ are in the upper half plane with modulus proved to be
less than $5$ (it is numerically $\le 2$), $s\in \mathbb{D}_5^+$ cf. \eqref{eq:defdd}.
 Since $J$ has singularities
(square root branch points) at $s=0$ and at $s=-4/3$ only, both $J$ and $L$ are single-valued for
$s\in \mathbb{D}_5^+$.

\subsubsection{Choosing initial data $s_0,x_0,u_0$}\label{Assumption1}

 {\em General strategy.} We obtain asymptotic expansions for large $x$, therefore $x_0$ will be chosen large enough. Also, $x_0$ will be chosen near $i\RR_-$, the lower edge of the sector of analyticity of the tritronqu\'ee solution $h_t$.
Choosing $s_0$ near $0$ (a singularity), and iterating the Poincar\'e map we will obtain $x_0,x_1,\ldots,x_n,\ldots,x_{N_m}$ which go through the sector with singularities up to the other edge, as it will be proved that arg$\,x_{N_m}$ is close to $-\pi$. It will turn out that $s_{N_m}$ is close to the other singularity, $-4/3$.

The iteration can also be done for other values for $s_0$, not necessarily close to singularities, in which case the estimates are simpler, and they can be used to obtain asymptotic conserved quantities in sectors with poles for any solution of P$_I$. In this paper however, we are interested in the connection problem, and then we do need $s_0$ close to singularities; see also Note \ref{Nnond}. More precisely:

{\bf{Assumption.}}{\em{ We choose $\mathfrak{m}>0$ a large enough number (a concrete estimate can be obtained by tracing the calculations involving it) and $x_0$ so that
\begin{equation}\label{ASSS1}
|x_0|>\mathfrak{m},\  \Im x_0<0 ,\  \ |\Re x_0|<\ln |x_0| ,\  \\ \ \Im s_0>0 ,\ \\ \ 1<|s_0x_0|<10,\ \ u_0=-4
\end{equation}}}
The choice of $u_0=-4$
makes some calculations simpler, cf. Note \ref{u04}, though we will use this precise value only later.

Note that \eqref{ASSS1} implies that $x_0=-i|x_0|e^{i\theta_x}$ where $\theta_x$ is $o(1)$, so indeed, $x_0$ is close to $i\RR_-$ and that $s_0=O(x_0^{-1})$, close to $0$ indeed.

\subsubsection{The solutions $J$, $\hat{J}$ used and other notations}\label{SolJhJ}

In the rest of the paper $J$ is the unique solution of \eqref{eq:difeq00} satisfying
\begin{equation}
  \label{eq:FJB}
  J(s)=2A \left[ 1-\left( {\frac {5}{96}}+{\frac {5}{48}}\,\ln   s
 \right) s \right] +2sB+O(s^2)
\end{equation}
with
\begin{equation}
  \label{eq:solAB}
 A=-\frac{12}{5};\ B=-\frac{3}{8}-\frac{1}{2}\ln(24)+\frac{\pi i}{4}
\end{equation}
and $\hat{J}(s)$ is the unique solutions of \eqref{eq:difeq00} with
\begin{equation}
  \label{eq:defJh}
 \hat{J}(0)=0, \ \hat{J}'(0)=\pi i
\end{equation}

Denote \begin{equation}
  \label{eq:eqQ}
  Q(u,s)=\frac{2}{3}\frac {6\,s+6\,su+18\,u+3\,u^2-4\,u^3-u^4}{s \left(
3\,s+4 \right) \sqrt {9\,s+9\,u^2+3\,u^3}}=\rho(s)\,\frac{P(u,s)}{R(u,s)}
\end{equation}
where $R$ is given by \eqref{def_R}, $\rho$ by \eqref{eq:difeq00}
and the polynomial $P$ equals
\begin{equation}
  \label{eq:eqP}
  P(u,s)=\frac{2}{15}\Big[u(2-u)(3+u)^2+6s(1+u)\Big]
\end{equation}
Denote
\begin{multline}\label{defs}
  N_0=\lfloor|x_0|^{\frac34}\rfloor,\  c_0=\frac{e^{-\frac{\pi
       i}{6}}}{\sqrt{3}},\  \zeta=\frac{5s_0x_0}{48},\ \tilde{\zeta}=\frac{5is_{_{N_m}}x_{_{N_m}}}{48},\ {n}'={n}+{\zeta},\
\tilde{n}'={n}+\tilde{\zeta}
\end{multline}
and define
 \begin{align}
  \label{eq:defbn}
 & B_{n-1}=\tfrac{5}{24}\left(n'\ln\tfrac{n'}{e}-\ln\Gamma(c_0+n')+\ln\Gamma(\zeta+c_0)-\zeta\ln\tfrac{\zeta}{e}\right)
  \nonumber\\
&\tilde{B}_{n-1}=-\tfrac{5}{24}\left(\tilde{n}'\ln\tfrac{\tilde{n}'}{e}-\ln\Gamma(\tfrac12+\tilde{n}')+\ln\Gamma(\tilde{\zeta}+\tfrac12)-\tilde{\zeta}\ln\tfrac{\tilde{\zeta}}{e}\right)
\end{align}
As $n\to\infty$ we have
\begin{align}
  \label{eq:defgab}
& B_{n-1}=\tfrac{24}{5}\left[\tfrac{i\sqrt{3}\ln n}{6}+g_a+O(n^{-1})\right],\ {\rm{where}}\  g_a:=-\zeta\ln\tfrac{\zeta}{e}+\ln\Gamma(\zeta+c_0)-\tfrac{1}{2}\ln(2\pi)\\\nonumber
 & \tilde{B}_{n-1}=\tfrac{24g_b}{5}+o(1),\ {\rm{where}}\
 g_b:=\tilde{\zeta}\ln\tfrac{\tilde{\zeta}}{e}-\ln\Gamma(\tilde{\zeta}+\tfrac12)+\tfrac12 \ln(2\pi)
\end{align}

\begin{Theorem}\label{recu1}
Under the assumption \eqref{ASSS1}, there exists a curve  $\mathcal{C}$ such that the following hold as $|x_0|\to \infty$.

\z (i) The system \eqref {eq:eqds2n},\eqref{eq:eqdx2n} has a unique
solution $(s,x)$ for $u$ traveling $n$ times on $\mathcal{C}$ with $0\Le n< N_m$, where $N_m\in\NN$ is the unique number
such that $0<\Im s_{N_m}<11/|x_0|$ and $|\Re s_{N_m}+4/3|<2|x_0|^{-1/2}$.

\z (ii) We have (see \eqref{defK} for notations)
 \begin{equation}
  \label{eq:eq44d1}
\mathcal{K}_n= \mathcal{K}(s_0)+\frac{48\pi in}{J_0x_0}+
\frac{2\pi i \phi_{n}}{x_0J_0}+O(x_0^{-5/4}\ln x_0)
\end{equation} where
$$\phi_{n}= \frac{n}{x_0}\(g_a+\frac{4\sqrt{3} i}{5}\ln\frac{5s_0x_0}{48}\)+\frac{1}{4\pi
    i}\int_{s_{0}}^{s_n}Q(u_0,s)\(-\frac{2\pi in}{x_0}J(s)-\hat{J}(s)\)ds $$

\z (iii) For $n\in(N_0,N_m-N_0]$,  $\mathcal{G}_n:=\mathcal{Q}_n/x_0$ (for notations see \eqref{eq:eqapprx}, \eqref{defs}) we have
\begin{equation}
  \label{eq:q3}
  \mathcal{G}_n= \mathcal{G}_{N_0}
-\tfrac{1}{2}x_0^{-1}\int_{s_{N_0}}^{s_{n}}Q(u_0,s)J(s)ds+O(x_0^{-3/2})
\end{equation}
while for $n\in [0,N_0]$  (resp. $n\in(N_0,N_m-N_0]$) where $s_n$ is small, $\mathcal{G}_n$ is given by
\begin{equation}\label{q3}
\mathcal{G}_n=\mathcal{G}_0+x_0^{-1}B_{n-1}+O(x_0^{-\frac54}\ln x_0),\ \text{resp.}\
\mathcal{G}_{N_m}-x_0^{-1}\tilde{B}_{N_m-n+1}+O(x_0^{-\frac54}\ln x_0)
\end{equation}
\end{Theorem}
Part (i) follows from Proposition \ref{Wholeinterval}, (ii) is proved in
\S\ref{partiii} and  (iii) is shown in
\S \ref{partii}, with \eqref{eq:defbn},\eqref{eq:defgab}  shown in Lemma \ref{valgab}. Higher order
corrections can be obtained in the usual asymptotic way, iteratively
order-by-order.
\begin{Note}\label{Nnond}{\rm
 In fact, \eqref{q3} applies to more general conditions $s_{N0}\in\HH$,
if  $s_n\in\HH$ for $n=0,...,N_m-N_0$.}
\end{Note}

Based on Theorem
\ref{recu1} (ii) and (iii) more orders can be obtained for $s_n$ and $x_n$:
\begin{Proposition}\label{c123}
(i) For $N_{m}-N_0\Le n \Le N_{m}$ we have
\begin{equation}\label{snn}
s_n=-\frac43- s_0
+\frac{24i}{5\pi}+\frac{1152n}{25J_{0}x_{0}}-\frac{2\phi_n}{x_0^2}+O\(x_0^{-2}\ln
^2x_0\)+O\((s_{n}\h,^-)^{3/2}\)
\end{equation}
and thus $N_{m}=\frac{|x_0|}{2\pi}+O(\ln x_0)$.

(ii) We have\vspace{-0.25cm}
\begin{equation}\label{sn8}
\Re s_{N_{m}}=-\frac43-\Re s_0+\frac{\Re (x_0J_0)}{\pi|x_0|} +O(x_0^{-1})
\end{equation}\vspace{-0.25cm}
(iii) Also\vspace{-0.25cm}
\begin{equation}\label{xnm1}
x_{N_{m}}=\frac{x_0J_0}{J_{N_{m}}}+\frac{\sqrt{3}}{6}\ln x_0+O(1)
\end{equation}\vspace{-0.25cm}
In particular
$$x_{N_{m}}=-i x_0+O(\ln x_0)\ \ \  \ \   \rm{and}    \ \ \ \ \
\Im x_{N_{m}}=\Im \frac{x_0J_0}{J_{N_{m}}}+O(1)$$\vspace{-0.25cm}
\end{Proposition}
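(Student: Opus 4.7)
The plan is to invert the formula of Theorem \ref{recu1}(ii) for $\mathcal{K}(s_n) = \hat J(s_n)/J(s_n)$ near the regular singular point $s = -\tfrac{4}{3}$ of \eqref{eq:difeq00}, and then to read off $x_{N_m}$ from the relation $\mathcal{G}_n = x_n J(s_n)/x_0$ provided by Theorem \ref{recu1}(iii). The key preparatory step is the local Frobenius analysis of $J$ and $\hat J$ at $s = -\tfrac{4}{3}$: writing $\tilde\sigma = s + \tfrac{4}{3}$, the indicial roots of \eqref{eq:difeq00} are $0$ and $1$, so the Frobenius basis takes the form $F_1(\tilde\sigma) = \tilde\sigma + \tfrac{5}{96}\tilde\sigma^2 + \cdots$ and $F_2(\tilde\sigma) = 1 + \tfrac{5}{48}\tilde\sigma\ln\tilde\sigma + O(\tilde\sigma^2\ln\tilde\sigma)$, with $W(F_1,F_2) = -1$. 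Writing $J = \alpha F_1 + \beta F_2$ and $\hat J = \alpha' F_1 + \beta' F_2$, the Wronskian identity $\alpha\beta' - \alpha'\beta = \tfrac{24\pi i}{5}$ (equivalent to $W(J,\hat J) = -\tfrac{24\pi i}{5}$, computed at $s\to 0$ from \eqref{eq:FJB},\eqref{eq:defJh}) fixes one relation; the explicit values of $\beta,\beta'$ come from the Gauss--Kummer connection formulas for \eqref{eq:difeq00}, which reduces after a M\"obius change of variable and a gauge factor to a standard hypergeometric equation with Riemann scheme $\{0,1;0,1;-\tfrac{1}{6},-\tfrac{5}{6}\}$ on $\{0,-\tfrac{4}{3},\infty\}$.

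A crucial feature is that the logarithmic contributions of $F_1$ and $F_2$ cancel identically in the quotient $\hat J/J$, giving
\[
\mathcal{K}(s) = \mathcal{K}(-\tfrac{4}{3}) + \mathcal{K}'(-\tfrac{4}{3})(s+\tfrac{4}{3}) + O\bigl((s+\tfrac{4}{3})^2\ln^2(s+\tfrac{4}{3})\bigr),
\]
with $\mathcal{K}(-\tfrac{4}{3}) = \beta'/\beta$ and $\mathcal{K}'(-\tfrac{4}{3}) = \kappa_0/\beta^2$. Inverting this, substituting the right-hand side of Theorem \ref{recu1}(ii), and expanding $\mathcal{K}(s_0)$ near $s_0=0$ using $\hat J(s) = \pi i\,s + O(s^2)$ and $J(s) = 2A + O(s\ln s)$ (so that $\mathcal{K}'(0) = \kappa_0/(2A)^2 = -\tfrac{5\pi i}{24}$) produces the asserted formula for $s_n$; the specific constants $\tfrac{24i}{5\pi}$, $\tfrac{1152}{25}$ and the coefficient of $s_0$ emerge directly from the closed-form values of $-\mathcal{K}(-\tfrac{4}{3})/\mathcal{K}'(-\tfrac{4}{3})$, $48\pi i/\mathcal{K}'(-\tfrac{4}{3})$ and $\mathcal{K}'(0)/\mathcal{K}'(-\tfrac{4}{3})$ once $\beta,\beta'$ are known. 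The $O(x_0^{-2}\ln^2 x_0)$ error is the $O(\tilde\sigma^2\ln^2\tilde\sigma)$ inversion remainder evaluated at $\tilde\sigma = O(x_0^{-1})$, combined with the contribution of $\phi_n = O(\ln^2 x_0)$ from the $2\pi i\phi_n/(x_0 J_0)$ term of Theorem \ref{recu1}(ii); the $O((s_n^-)^{3/2})$ remainder controls the next-order terms in the Frobenius expansion. The estimate $N_m = |x_0|/(2\pi) + O(\ln x_0)$ then follows from the defining conditions $0 < \Im s_{N_m} < 11/|x_0|$ and $|\Re s_{N_m}+\tfrac{4}{3}| < 2|x_0|^{-1/2}$: since $\arg x_0 \approx -\tfrac{\pi}{2}$ makes $\tfrac{1152 n}{25 J_0 x_0}$ nearly pure imaginary, this term must cancel the $O(1)$ constant $\tfrac{24i}{5\pi}$, pinning down $N_m$ to leading order.

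Parts (ii) and (iii) follow. For (ii), take the real part of (i) at $n = N_m$; using $\Re(1/(J_0 x_0)) = \Re(J_0 x_0)/|J_0 x_0|^2$, inserting $N_m = |x_0|/(2\pi) + O(\ln x_0)$ and $|J_0|^2 = (2A)^2 + O(x_0^{-1}\ln x_0)$, the numerical prefactors collapse to yield $\Re(x_0 J_0)/(\pi|x_0|) + O(|x_0|^{-1})$. For (iii), use $x_{N_m} = x_0\mathcal{G}_{N_m}/J_{N_m}$ and decompose
\[
\mathcal{G}_{N_m} - \mathcal{G}_0 = (\mathcal{G}_{N_0} - \mathcal{G}_0) + (\mathcal{G}_{N_m-N_0} - \mathcal{G}_{N_0}) + (\mathcal{G}_{N_m} - \mathcal{G}_{N_m-N_0}),
\]
applying the three regimes of Theorem \ref{recu1}(iii) to each piece. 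The boundary pieces contribute $\pm\tfrac{24}{5}\cdot\tfrac{i\sqrt{3}\ln N_0}{6}\cdot x_0^{-1}$ via \eqref{eq:defgab}; with $\ln N_0 = \tfrac{3}{4}\ln|x_0| + O(1)$ and $J_{N_m} = J(-\tfrac{4}{3}) + O(x_0^{-1/2}) = \beta + O(x_0^{-1/2})$, these combine after division by $J_{N_m}$ into the $\tfrac{\sqrt{3}}{6}\ln x_0$ term, with the intermediate integral and the constants $g_a, g_b$ absorbed into the $O(1)$ error. The corollaries then follow: the closed-form ratio $J_0/J_{N_m} = 2A/\beta = -i$ (up to $O(x_0^{-1/2})$) gives $x_{N_m} = -ix_0 + O(\ln x_0)$, and $\Im(\tfrac{\sqrt{3}}{6}\ln x_0) = O(1)$ since $\arg x_0 = -\tfrac{\pi}{2} + o(1)$, absorbing the log into the $O(1)$ error for $\Im x_{N_m}$. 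The main obstacle is the Gauss--Kummer connection computation for \eqref{eq:difeq00} that produces $\beta,\beta'$ in closed form: the specific rational constants $\tfrac{24i}{5\pi}$, $\tfrac{1152}{25}$, $\tfrac{\sqrt{3}}{6}$ and the $-i$ phase leave no room for approximation, and tracking the gamma-function entries of the connection matrix through the M\"obius map and exponent shifts is where the bulk of the explicit calculation lies; once this is done, the rest is bookkeeping of the expansions from Theorem \ref{recu1}.
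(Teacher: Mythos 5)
Your approach to part (i) is essentially the paper's: express $\mathcal{K}_n$ via Theorem~\ref{recu1}(ii), expand $\mathcal{K}$ near the regular singular points of \eqref{eq:difeq00}, and invert. The Frobenius data you propose to compute are in fact already available in the paper: Proposition~\ref{js0} and Lemma~\ref{eps1} give $J(-\tfrac43)=\hat J(-\tfrac43)=-\tfrac{24i}{5}$, $\hat J(s)-J(s)=-\pi\sm+O(\sm^{3/2})$, etc., and Proposition~\ref{jjr}(iii) records $\mathcal{K}(s)=1+\tfrac{5\pi i}{24}\sm+O(\sm^{3/2})$ and $\mathcal{K}(s)=-\tfrac{5\pi i}{24}s+O(s^{3/2})$. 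The paper obtains these by elementary integral evaluations of $J_{3,1},L_{3,1},J_{3,2}$, not by Gauss--Kummer connection formulas; both routes work, but your proposed rederivation via the connection matrix for the degenerate hypergeometric equation is a significant detour duplicating results the paper already has, whereas the proof of Proposition~\ref{c123} simply invokes Lemma~\ref{eps1} and \eqref{eq:eq44d}. Your Wronskian $W(J,\hat J)=-\tfrac{24\pi i}{5}$ and the resulting values $\mathcal{K}(-\tfrac43)=1$, $\mathcal{K}'(-\tfrac43)=\tfrac{5\pi i}{24}$ are correct, and the back-of-envelope check of the constants $\tfrac{24i}{5\pi}$ and $\tfrac{1152}{25}$ goes through. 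Part (ii) by taking real parts is also fine and matches the paper.

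There is, however, a genuine error in your accounting for part (iii). You assert that ``the boundary pieces contribute $\pm\tfrac{24}{5}\cdot\tfrac{i\sqrt3\ln N_0}{6}\cdot x_0^{-1}$'' and that the intermediate integral and the constants $g_a,g_b$ are absorbed into $O(1)$. Neither claim is right. By \eqref{eq:defgab}, only $B_{n-1}$ carries a $\ln n$ term; $\tilde B_{n-1}=\tfrac{24g_b}{5}+o(1)$ has no logarithm at all --- this is precisely the point of Note~\ref{u04} and the choice $u_0=-4$: since $P(-4,-\tfrac43)=0$, the integrand $Q(u_0,s)$ is regular at $s=-\tfrac43$, so the $\sm\to 0$ end contributes no logarithm. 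Moreover, the middle integral $-\tfrac{1}{2x_0}\int_{s_{N_0}}^{s_n}QJ\,ds$ is \emph{not} $O(x_0^{-1})$: near $s=0$ one has $Q(u_0,s)J(s)\sim -\tfrac{8\sqrt3 i}{5s}$, so its lower endpoint $s_{N_0}\sim\tfrac{48N_0}{5x_0}$ produces a term $\tfrac{4\sqrt3 i}{5}\bigl(-\ln N_0+\ln x_0+O(1)\bigr)$. When you sum the three pieces, the $\ln N_0$ from $B_{N_0-1}$ cancels the $-\ln N_0$ from the middle integral, and the surviving $\tfrac{4\sqrt3 i}{5}\ln x_0$ comes entirely from the middle integral (equivalently, from $\ln s_0 = -\ln x_0 + O(1)$ in the paper's combined formula in the proof of Theorem~\ref{recu1}(iii)). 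Dividing by $J_{N_m}=-\tfrac{24i}{5}+O(x_0^{-1/2})$ then yields the $\ln x_0$ term in \eqref{xnm1}. With your ``$\pm\ln N_0$ from the two boundaries, middle integral $O(1)$'' accounting, the coefficient of $\ln x_0$ would come out wrong (either $0$ or a multiple inconsistent with $\tfrac{\sqrt3}{6}$), so this is not merely a bookkeeping slip but a structurally incorrect attribution that must be repaired. Your remaining observations --- $J_0/J_{N_m}=-i+O(x_0^{-1/2})$ giving $x_{N_m}=-ix_0+O(\ln x_0)$, and $\Im(\tfrac{\sqrt3}{6}\ln x_0)=O(1)$ since $\arg x_0=-\tfrac{\pi}{2}+o(1)$ --- are correct.
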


The proof is given in \S\ref{Pfc123}.

\section{Proofs. I. General properties of the functions used in the proofs}\label{GenProp}

\subsection{The zeroes of $R(u,s)$}\label{roots_prop}
We denote
\begin{equation}
  \label{eq:defdd}
 \mathbb{H}=\{z\in\CC\, \big|\, \Im z>0\},\ \  \mathbb{D}^+_\rho=\{s\in\mathbb{H}\, \big|\,|s|<\rho\}\, ,\ \ \  \mathbb{D}^-_\rho=\{s\in-\mathbb{H}\, \big|\,|s|<\rho\}
\end{equation}
Denote by $P_s(u)$ the following polynomial in $u$, with parameter $s\in \mathbb{H}$
 \begin{equation}\label{defPsofu}
 P_s(u)=u^3/3+u^2+s
\end{equation}
 and note the symmetry
$P_s(u)=-P_{-s-4/3} (-u-2)$,
or
\begin{equation}\label{sym_T}
P_s(u)=-P_{-\sm} (-\um)\ \ \ {\rm{where\ \ }}\sm=s+\frac43,\ \  \ \um=u+2
\end{equation}
which entails that results for $s$ close to $0$ can be translated into results for $s$ close to $-\frac43$.

The only values of $s$ for which two roots of
$P_s(u)$ coalesce are $s=0$ and $s=-4/3$. Therefore the roots $r_{1,2,3}(s)$ of $P_s(u)$ are distinct and analytic for $s\in  \mathbb{H}$ (see, e.g. \cite{Ahlfors}). Lemma\,\ref{lem1} gives bounds for these roots and for distances between them, see Fig.\,\ref{fig:ro}.

\begin{Lemma}\label{lem1}

(0) For $s\in\mathbb{H}$ we have
\begin{equation}
  \label{eq:eqr1}
  \begin{array}{ll}
   \ro_1\in -\mathbb{H}, & \Re \ro_1<-2 \\
   {\ro}_2\in \mathbb{H}, & \arg {\ro}_2>\arctan(3/2),\ \
  \arg({\ro}_2+2)<\pi-\arctan(3/2)\\
{\ro}_3\in -\mathbb{H}, & \Re {\ro}_3  >0
\end{array}
\end{equation}

(i)  For  $|s|<\mathbb{D}^+_{\sqrt{\tfrac23}}$ (for notation see \eqref{eq:defdd})
with
the choice $\sqrt{s}>0$ if $s>0$ (and a choice of labeling of the roots) we have
\begin{equation}
  \label{eq:eqr}
|\ro_1+3+s/3|<|s^2| ,\ \ \ \    |r_{2}-i\sqrt{s}|<|s|,\ \ \ \ |r_{3}+i\sqrt{s}|<|s|
\end{equation}

(ii) For  $\sm\in \mathbb{D}^+_{\sqrt{2/3}}$ we have
\begin{equation}
  \label{eq:eqrt0}
 |2+\ro_1+\sqrt{\sm}|<|\sm|,\ \ \ \      |2+{\ro}_2-\sqrt{\sm}|<|\sm|,\ \ \ \
 |{\ro}_3-1+\sm/3|<|\sm^{\!\!\!2}|
  \end{equation}

(iii) Let $r_{j}(s;t)$ be the roots of $\ {tu^3}/{3}+u^2+s$, labeled
with the convention $r_j(s;1)=r_j(s)$.

If $|s|\in\mathbb{D}^+_{1/10}$ then $r_{{\tcT}}(s;t)$ are real analytic in $t\in
(0,1)$.

If $|\sm|\in\mathbb{D}^+_{1/10}$ then $r_{1,2}(s;t)$ are real
analytic in $t\in (0,1)$.

(iv) If $s\in \newD6p $ then $|r_{1,2,3}(s)|<399/100$.
\end{Lemma}
% let $P_{t,u,s}=tu^3^3 u^2+4/3+s$.  Suppose $|s|<1/10$ and The
%roots
%of $P_{t,u,s}$ are real analytic in $t\in (0,3)$.

\begin{figure}
  \centering
\includegraphics[scale=0.8]{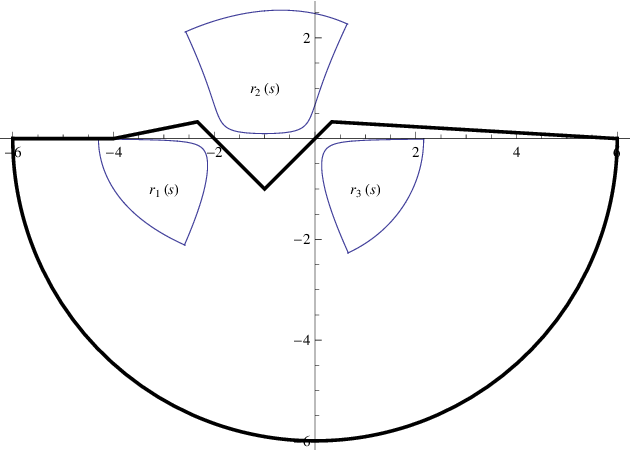}
  \caption{Regions of the roots of $u^3/3+u^2+s$ and the contour $\mathcal{C}$}
  \label{fig:ro}
\end{figure}

The proof, elementary, but rather laborious, is given in the Appendix, \S\ref{appendix}.

\begin{Note}[{\rm{Choosing the closed path.}}]\label{chooseC}
Let  $\mathcal{C}$ consist of the polygonal line
connecting
$6,\, \frac{1+i}{3},$ $ -(1+i),-2+\frac{-1+i}{3},\, -3+\frac{i}{3},\, u_0,\,-6$ and a semicircle of radius 6 centered at the origin in the lower half plane, see
Figure \ref{fig:ro}. For us $u_0=-4$, cf. \S\ref{Assumption1}.

\end{Note}

We will also need the incomplete integrals $J$ and $L$:
\begin{equation}
  \label{eq:eqJLu}
  J(u,s)=\int_{u_0}^uR(v,s(v))\,dv;\ \ L(u,s)=\int_{u_0}^u\,\frac{dv}{R(v,s(v))}
\end{equation}
where the integration is along $\mathcal{C}$ (specified above),  which will be shown to surround  two of the three roots of $u^3/3+u^2+s_0$. The contour is traveled upon multiple times, and we will use an index $n$
to specify the winding number.
\begin{Corollary}\label{lem2}
 (i)  We have
  ${\ro}_3(s),\,\ro_1(s)\in \DD^-_5$ while ${\ro}_2(s)\in {\DD^+_5}$ for all $s\in\newD6p$.

(ii) Consider the polygon $\hat{\mathcal{C}}_0$ with vertices
$-1,-1-6i,6-6i,6+6i,-6+6i,-6,-1$ oriented anticlockwise. Then ${\ro}_2(s),\,{\ro}_3(s)\in\text{int}(\hat{\mathcal{C}}_0)$ while
  $\ro_1(s)\in\text{ext}(\hat{\mathcal{C}}_0)$ if  $s\in\newD6p$.

(iii)  For all  $s\in
\newD6p$ the path $\mathcal{C}$ defined in Note\,\ref{chooseC} encloses $r_1(s),\, r_3(s)$, leaving $r_2(s)$ outside. Moreover
\begin{equation}\label{alp}
\alpha:=\sup_{u\in\mathcal{C},s\in\newD6p} |J(u,s)|<\infty
\end{equation}
where ${J}$ is as defined in \eqref{eq:eqJLu}.
\end{Corollary}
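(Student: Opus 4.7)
The plan is to deduce Corollary~\ref{lem2} directly from Lemma~\ref{lem1} by elementary geometric verification, with part (iii) carrying the main content.

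First I would dispatch (i) and (ii), which are essentially one-liners. For (i), combining Lemma~\ref{lem1}(iv), which gives $|r_j(s)|<399/100<5$, with Lemma~\ref{lem1}(0), which places $r_1,r_3\in-\mathbb{H}$ and $r_2\in\mathbb{H}$, yields the claim. For (ii), I would describe $\mathrm{int}(\hat{\mathcal{C}}_0)$ as the L-shape $((-1,6)\times(-6,6))\cup((-6,-1)\times(0,6))$. Lemma~\ref{lem1}(0),(iv) then place $r_1$ in the excluded lower-left rectangle $(-5,-2)\times(-5,0)$, $r_2$ in the upper half-disk of radius $5$ (contained in the $L$), and $r_3$ in the right half-disk of radius $5$ (contained in $(-1,6)\times(-6,6)$).

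For part (iii) the strategy is a winding-number computation. I would split $\mathcal{C}$ into the upper polygonal path $\Gamma$ from $6$ to $-6$, whose graph stays in $|\Im v|\le 1$ (and in $\Im v\in[0,1/3]$ above the real axis), and the lower semicircle of radius $6$. Since $|r_j(s)|<5<6$, every root is strictly inside the outer semicircle, so the enclosure question reduces to whether each root lies above or below $\Gamma$ at its own real part. For $r_1$ with $\Re r_1<-2$, $\Im r_1<0$, and for $r_3$ with $\Re r_3>0$, $\Im r_3<0$, the path $\Gamma$ lies in $\overline{\mathbb{H}}$ on the relevant $\Re$-intervals $(-5,-4)\cup(-4,-2)$ and $(0,6)$, so both roots sit strictly below $\Gamma$ and are enclosed by $\mathcal{C}$. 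For $r_2\in\mathbb{H}$ I would use the two cone estimates of Lemma~\ref{lem1}(0) on disjoint $\Re$-subintervals: on $\Re r_2\in(0,6)$, $\arg r_2>\arctan(3/2)$ gives $\Im r_2>\tfrac32\Re r_2$, which strictly exceeds the $\Gamma$-height $\min(\Re r_2,(6-\Re r_2)/17)$ throughout; on $\Re r_2\in(-4,-2)$, $\arg(r_2+2)<\pi-\arctan(3/2)$ gives $\Im r_2>\tfrac32|\Re r_2+2|$, which exceeds the $\Gamma$-height (bounded by $1/3$) throughout; and on $\Re r_2\in(-2,0)\cup(-5,-4)$ the path $\Gamma$ lies in $\overline{-\mathbb{H}}\cup\RR$, placing $r_2\in\mathbb{H}$ above it automatically. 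Hence $r_2$ is outside $\mathcal{C}$.

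For the uniform bound $\alpha<\infty$, I would argue by compactness. The roots $r_j(s)$ depend continuously on $s$ (with local parametrizations at the singular values $0,-4/3$ provided by Lemma~\ref{lem1}(i),(ii)), so a single-valued branch of $R=\sqrt{P_s}$ is well-defined and continuous along $\mathcal{C}$ (the winding around two enclosed branch points gives trivial monodromy) and extends continuously in $s$ to the closure. Hence $|R|$ attains a finite maximum on the compact set $\mathcal{C}\times\overline{\mathbb{D}_{21/4}^+}$, and $|J(u,s)|\le\|R\|_\infty\cdot\mathrm{length}(\mathcal{C})$ gives $\alpha<\infty$. The step I expect to be the main obstacle is the $r_2$ verification in (iii): the slope $3/2$ appearing in the cone constraints of Lemma~\ref{lem1}(0) is tuned precisely to clear the polygonal path, so the comparison is not robust to rough estimates and requires a careful subinterval-by-subinterval check.
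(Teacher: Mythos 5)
Your argument is correct and follows the same route as the paper: both proofs reduce (i) and (ii) to the root-location bounds in Lemma~\ref{lem1}, and both prove (iii) by combining those bounds with continuity of $J$ on a compact set. The paper disposes of (iii) in one sentence ("obvious by Lemma~\ref{lem1} and continuity of $J$"); you are simply making explicit the subinterval-by-subinterval winding check and the compactness argument that the authors left implicit, and your observation that the $3/2$ slope in Lemma~\ref{lem1}(0) is tuned to clear the polygon correctly identifies the only place the verification is tight rather than generous.
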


\begin{proof}
 By Lemma \ref{lem1} we have $|r_j(s)|<5$, $j=1,2,3$,   $\Im r_{{\Tco}}(s)<0$ and
 $\Im {\ro}_2(s)>0$ implying (i). Continuity of $J$ is manifest, and  $J(0)$ is an
 elementary integral.

(ii) By Lemma \ref{lem1} we have ${\ro}_1(s)\in\{z:\Im z<0,\Re z<-2\}$ implying
${\ro}_1(s)\in\text{ext}(\hat{\mathcal{C}}_0)$, ${\ro}_2(s)\in\{z:\Im z>0,|z|<5\}$,
and ${\ro}_3(s)\in\{z:\Im z<0,\Re z>0,|z|<5\}$, which implies
${\ro}_2(s)$ and ${\ro}_3(s)$ are in $\text{int}(\hat{\mathcal{C}}_0)$. Continuity
of $\hat{J}$ at zero is manifest, and it implies $\hat{J}(0)=0$; this together
with the fact that $\hat{J}(s)$
satisfies \eqref{eq:difeq00} implies,   by Frobenius theory, that  it is
analytic at zero (see also \S\ref{j043} below);  the value of $\hat{J}'(0)$
is simply obtained by the residue theorem.

(iii) This is obvious by Lemma \ref{lem1} and continuity of $\tilde{J}$.
\end{proof}

\subsection{Link between $J$ and $L$}\label{LinkJL}
We use the notations \eqref{def_R},\,\eqref{eq:eqJL} where $\mathcal{C}$ can be any closed curve  (piece-wise smooth),\eqref{eq:eqQ},\,\eqref{eq:eqP}.

\begin{Proposition}\label{Ident1}
 We have
  \begin{equation}
    \label{eq:iden1}
    \frac{\partial Q}{\partial u}=   \frac{1}{R(u,s)^3}-\rho(s)R(u,s)
       \end{equation}
       %\ \ \ \frac{\partial Q}{\partial u}=   \frac{1}{(u^3/3+u^2+s)^{3/2}}-\rho(s)\sqrt{u^3/3+u^2+s}

In particular,
 using \eqref{eq:eqJL} and \eqref{eq:iden1} we have
\begin{equation}
  \label{eq:id3}
  \frac{dJ(s)}{ds}=\frac{1}{2}L(s);\ \ \ \ \ \ \ \frac{dL(s)}{ds}=-\oint_\mathcal{C}\, \frac{du}{2R^3(v,s)}=-\frac{1}{2}\rho(s)J(s)
\end{equation}

\end{Proposition}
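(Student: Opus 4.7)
The plan is to prove the identity \eqref{eq:iden1} by a direct algebraic verification, and then to deduce the two formulas in \eqref{eq:id3} by differentiating under the contour integral and integrating the pointwise identity around $\mathcal{C}$.

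For \eqref{eq:iden1}, I would start from $Q = \rho(s)\,P(u,s)/R(u,s)$ and compute
\[
\frac{\partial Q}{\partial u}=\rho(s)\,\frac{P_u R - P R_u}{R^2}.
\]
Since $R^2 = u^3/3 + u^2 + s$, differentiating gives $2RR_u = u^2+2u$, so multiplying through by $2R^3$ turns the claim into
\[
\rho(s)\bigl[\,2 P_u(u,s)\,R^2 - P(u,s)(u^2+2u)\,\bigr] \;=\; 2 - 2\rho(s)R^4.
\]
Substituting the explicit $P$ from \eqref{eq:eqP}, $R^2 = u^3/3+u^2+s$, and $\rho(s) = 5/[3s(3s+4)]$ reduces everything to a polynomial identity in $u$ with coefficients rational in $s$; equivalently, after clearing $\rho(s)$ one must verify the identity
\[
2 P_u(u,s)\,R^2 - P(u,s)(u^2+2u) + 2R^4 \;=\; \tfrac{6}{5}\,s(3s+4)
\]
as polynomials in $u$. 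Both sides are of degree $\leqslant 6$ in $u$, and direct expansion confirms the equality coefficient-by-coefficient. This bookkeeping is the main obstacle — it is completely routine but tedious.

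The relations in \eqref{eq:id3} then follow quickly. From $R^2 = u^3/3+u^2+s$ we get $\partial R/\partial s = 1/(2R)$, so differentiating under the contour integral in \eqref{eq:eqJL},
\[
\frac{dJ}{ds} \;=\; \oint_{\mathcal{C}} \frac{\partial R}{\partial s}\,du \;=\; \oint_{\mathcal{C}} \frac{du}{2R} \;=\; \tfrac{1}{2} L(s),
\]
and similarly $dL/ds = -\oint_{\mathcal{C}} du/(2R^3)$. By Corollary~\ref{lem2}(iii), $\mathcal{C}$ encloses exactly two of the three branch points $r_1, r_3$ of $R$; hence $R$ (and therefore $Q = \rho P/R$) returns to its original branch along $\mathcal{C}$, so $Q$ is single-valued on $\mathcal{C}$ and $\oint_{\mathcal{C}} \partial_u Q \, du = 0$. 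Integrating the identity \eqref{eq:iden1} around $\mathcal{C}$ then gives
\[
0 \;=\; \oint_{\mathcal{C}} \frac{du}{R^3} - \rho(s) \oint_{\mathcal{C}} R\,du \;=\; \oint_{\mathcal{C}} \frac{du}{R^3} - \rho(s)\,J(s),
\]
and substituting this into the expression for $dL/ds$ yields $dL/ds = -\tfrac{1}{2}\rho(s) J(s)$, completing the proof.
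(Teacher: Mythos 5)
Your proof is correct and follows the same approach as the paper, which simply states that Proposition~\ref{Ident1} is proved ``by direct verification'' — exactly the algebraic check you carry out. Your deduction of \eqref{eq:id3} by differentiating under the integral and using single-valuedness of $Q$ on $\mathcal{C}$ (so that $\oint_{\mathcal{C}}\partial_u Q\,du=0$) is the intended argument and is correct.
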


The proof of Proposition\,\ref{Ident1} is by direct verification. \Bx

\subsection{Integral representations of $J$, $\hat{J}$ , $L$ and $\hat L$}\label{j043}
 We defined $J$ and $\hat{J}$ as solutions of
 \eqref{eq:difeq00} satisfying the initial conditions \eqref{eq:FJB}-\eqref{eq:defJh}; we now derive some integral representations useful in the sequel.

Denote by $\kappa$ is the elliptic modulus
\vspace{-0.25cm}\begin{equation}\label{kap}
\kappa=\frac{{\ro}_1-{\ro}_3}{{\ro}_2-{\ro}_3}.
\end{equation}\vspace{-0.25cm}

\begin{Lemma}
  {\rm The points ${\ro}_1,{\ro}_2,{\ro}_3$ are collinear only if
$s\in (-4/3,0)$. The roots $r_j$ are analytic in $s$ except
for $s\in \{-4/3,0\}$. Furthermore, the triangle $\Delta [{\ro}_1,{\ro}_2,{\ro}_3]$ preserves its orientation when
$s$ traverses any curve $\gamma\subset\CC$ which does not cross the real line.

As a consequence,  for $s\ne -4/3,0$ the roots satisfy}
\begin{equation}\label{r123}
r_3{\mbox{ does not belong to the segment  }}[{\ro}_1,{\ro}_2]\subset\CC
\end{equation}

\end{Lemma}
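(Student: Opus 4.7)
The plan is to dispose of the three assertions in order, reducing each to an elementary calculation. First, for analyticity of the roots $r_j(s)$ I would compute the discriminant of $P_s(u)=u^3/3+u^2+s$, which (after clearing denominators to $u^3+3u^2+3s$) equals $-81\,s(3s+4)$ and vanishes only at $s=0,-4/3$. Away from these two points the three roots are simple, so the implicit function theorem gives local analyticity and a consistent labelling along any path in $\CC\setminus\{0,-4/3\}$.

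For the collinearity characterisation I would depress the cubic via $v=u+1$, producing $v^3-3v+q=0$ with $q=2+3s$ and $v_1+v_2+v_3=0$; the $r_j$ are collinear iff the $v_j$ are. Three points with vanishing centroid are collinear iff they all lie on a single line through the origin, i.e.\ $v_j=t_je^{i\alpha}$ with $t_j\in\RR$ summing to zero. Substituting into Vieta gives
\begin{equation*}
e^{2i\alpha}\sum_{j<k}t_jt_k=-3,\qquad e^{3i\alpha}t_1t_2t_3=-q.
\end{equation*}
Since $\sum_{j<k}t_jt_k=-\tfrac12\sum t_j^2\le 0$ is real and nonpositive, the first relation forces $e^{2i\alpha}\in\RR$, and the case $\alpha\equiv\pi/2\!\pmod\pi$ is excluded by the sign; hence $\alpha\equiv 0\!\pmod\pi$, so all $v_j$ are real and $q\in\RR$. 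The standard real cubic criterion $|q|<2$ then yields collinearity precisely on $s\in(-4/3,0)$. The orientation claim is immediate from continuity: the signed area $\sigma(s)=\tfrac12\Im[(r_2-r_1)\overline{(r_3-r_1)}]$ is continuous on $\CC\setminus\{0,-4/3\}$ and vanishes only on $(-4/3,0)\subset\RR$, so on any curve $\gamma$ avoiding $\RR$ the sign of $\sigma$ is locally constant, hence constant on $\gamma$.

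For the consequence $r_3\notin[r_1,r_2]$: if $r_3$ lay on this segment the triple would be collinear, so either $s\in\{0,-4/3\}$ (excluded) or $s\in(-4/3,0)$ real. In the latter case I would identify the real limits of the three analytic branches labelled as in Lemma\,\ref{lem1}(0). Using the expansions (i)--(ii) and tracking the branch of $\sqrt{s}$ (resp.\ $\sqrt{s+4/3}$) around $s=0$ (resp.\ $s=-4/3$) along an arc in $\HH$, one finds, near $s=0^-$, $r_2\sim-\sqrt{|s|}$ and $r_3\sim+\sqrt{|s|}$, and near $s=-4/3^+$, $r_{1,2}\sim-2\mp\sqrt{s+4/3}$ with $r_3\sim 1$; continuity on $(-4/3,0)$ then locks in the ordering $r_1<-2<r_2<0<r_3$, whence $r_3\notin[r_1,r_2]$. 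The hard part, I expect, is precisely this last tracking of the analytic labelling across the collinearity interval, but all needed expansions are already available in Lemma\,\ref{lem1}.
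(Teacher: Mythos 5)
Your proof is correct and covers all four assertions. The analyticity claim (via the discriminant $-81s(3s+4)$ and the implicit function theorem) and the orientation claim (via continuity of the signed area of $\Delta[r_1,r_2,r_3]$, vanishing only on $[-4/3,0]$) are essentially the same arguments the paper uses. Where you genuinely diverge is in the collinearity characterisation: the paper parametrises the roots explicitly by the elliptic modulus $\kappa=(r_1-r_3)/(r_2-r_3)$, notes that collinearity forces $\kappa\in\RR$, and then reads off from the closed formulas $r_j=-1+(\cdots)/\sqrt{\kappa^2-\kappa+1}$ that all $r_j$ are real, hence $s\in(-4/3,0)$. You instead depress the cubic to $v^3-3v+q=0$ with $\sum v_j=0$, use the centroid-at-origin observation to write $v_j=t_je^{i\alpha}$, and extract reality of $\alpha$ and $q$ directly from the Vieta relations. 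Your route avoids invoking the $\kappa$-formulas (which the paper only uses again much later, in \S\ref{j043}) and is a bit more self-contained; the paper's is shorter once those formulas are on the table. You are also somewhat more careful than the paper at the final step: the paper simply asserts that for $s\in(-4/3,0)$ one has $r_1<r_2<r_3$ in the labelling of Lemma~\ref{lem1}, whereas you actually track the analytic branches of $\sqrt{s}$ and $\sqrt{s+4/3}$ from $\HH$ down to the real interval to fix that ordering, which is the kind of detail the paper leaves implicit. Both proofs are sound.
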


\begin{proof} By Vieta's formulas,  ${\ro}_2+{\ro}_3+{\ro}_1=-3,\ {\ro}_2{\ro}_3+{\ro}_2{\ro}_1+{\ro}_3{\ro}_1=0,\ {\ro}_2{\ro}_3{\ro}_1=-3s$.
%\begin{equation}\label{A1}
 %\end{equation}
and a straightforward calculation gives
$${\ro}_2=-1+\frac{\kappa-2}{\sqrt{\kappa^2-\kappa+1}},\ \ {\ro}_3=-1+\frac{1+\kappa}{\sqrt{\kappa^2-\kappa+1}},\ \ {\ro}_1=-1+\frac{1-2\kappa}{\sqrt{\kappa^2-\kappa+1}}$$

If $r_{1,2,3}$ are colinear then $\kappa\in\RR$ which in turn implies  $r_{1,2,3}\in\RR$, hence $s\in (-4/3,0)$, in which case $r_1<r_2<r_3$.

  Analyticity is standard: the roots satisfy $F(r,s)=0$ $, r=r_j$,
  which, by the implicit function theorem defines analytic functions
  $r_j(s)$ in a neighborhood of any point where $F_{r}\ne 0$. But
  $F_r(r_j)=0$ clearly means that the polynomial has a double root. If we take a
curve $\gamma$ not intersecting $S_0$, then ${\ro}_1$ and ${\ro}_2$ are always distinct,
and we can orient the line through ${\ro}_1$ and ${\ro}_2$ by choosing the direction
from ${\ro}_1$ to ${\ro}_2$ as being positive. If ${\ro}_3$ is, for some $s$, to the left of the line (in the usual meaning) it stays to the left by continuity,
since the distance between $s_0$ and the line is never zero.
\end{proof}

\begin{Proposition}\label{js0}
(i)   We have $L=2L_{3,1}$, $J=2J_{3,1}$, $\hat{J}=2J_{{3,2}}$ and
  $\hat{L}=2L_{3,2}$,
where
\begin{equation}
  \label{js01}
  L_{i;j}=\int_{r_i}^{r_j}\frac{1}{R(u,s)}du\,\ \ \ \ \ \ \ \ \ \ \ \ \  J_{i;j}=\int_{r_i}^{r_j}R(u,s)du
\end{equation}
with the branch of $R(u,s)=\sqrt{P_s(u)}=3^{-1/2}\sqrt{(u-{\ro}_1)(u-{\ro}_2)(u-{\ro}_3)}$ defined so that {\rm{arg}}$\,P_s(u)=0$ for $u\to+\infty$, for $s\in\RR$ {\rm{arg}}$\,P_s(u)=0$ for $u>r_3$,
 {\rm{arg}}$\,P_s(u)=\pi$ for $r_2<u<r_3$,  {\rm{arg}}$\,P_s(u)=2\pi$ for $r_1<u<r_2$,  and {\rm{arg}}$\,P_s(u)=3\pi$ for $u<r_1$. For $s$ in the upper half plane the branch of $R(u,s)$ is defined by  analytic continuation.

(ii)
We have, with the notation \eqref{kap},
\begin{equation}\vspace{-0.25cm}
  \label{eq:t23}
J_{3,1}=  -3^{-1/2}({\ro}_1-{\ro}_3)^{5/2}\int_0^1\sqrt{t(1-t)( \kappa^{-1}-t)}dt
\end{equation}\vspace{-0.25cm}
\begin{equation}\vspace{-0.25cm}
  \label{eq:t21}
J_{3;2}=-3^{-1/2}({\ro}_2-{\ro}_3)^{5/2}\int_0^1\sqrt{t(1-t)(\kappa-t)}dt
\end{equation}\vspace{-0.25cm}
(iii) As $\sm=s+\tfrac43\to 0$ we have
\begin{equation}\label{js43}
J(s)=-\frac{24i}{5}-\(\tfrac12 \pi-i\ln 24 -\tfrac12 i\)\sm -\tfrac12
i \sm \ln(-\sm)+O(\sm^{\!\!\!\!2}\ln \sm)
\end{equation}

\begin{equation}\label{ls43}
L(s)=-i\ln \sm-\pi+2i\ln24 +o(1)
\end{equation}
where $\sm=s+4/3$ is as defined in Lemma \ref{lem1}.

(iv) As $s\to 0$ and $\sm\to 0$ resp. we have
\begin{equation}
  \label{eq:t21a}
\hat{J}(s)= \pi i s+O(s^2\ln s);\ \hat{J}(s) =J(s)-2J_{{\tso}}=J(s)+\pi \sm+O(\sm^{\!\!\!2}\ln \sm)\end{equation}
In particular we have \eqref{eq:FJB} and \eqref{eq:defJh}.
\end{Proposition}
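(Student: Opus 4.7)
The plan is to handle the four parts in order, using contour deformation to reduce the closed-loop periods to integrals across branch cuts, a linear substitution to normalize the cut integrals, and then local asymptotic analysis of the resulting elementary integrals.

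\textbf{Part (i).} The function $R(u,s) = 3^{-1/2}\sqrt{P_s(u)}$ has branch points exactly at $r_1, r_2, r_3$, and the closed path $\mathcal{C}$ encloses $r_1, r_3$ but not $r_2$ (Corollary \ref{lem2}(iii)). Standard contour deformation collapses $\mathcal{C}$ onto a dogbone hugging the branch cut between $r_3$ and $r_1$; the two sides of the cut carry branch values differing by a sign and contribute equally, yielding $J = 2 J_{3,1}$ and $L = 2 L_{3,1}$. The branches of $R$ on the intermediate real intervals (for $s\in(-4/3,0)$, where all roots are real and $r_1<r_2<r_3$) are then determined by propagating the sign convention across each cut. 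To identify $\hat{J} = 2J_{3,2}$, observe that $2J_{3,2}$ solves \eqref{eq:difeq00} (it is a period of the same quadratic differential), vanishes at $s = 0$ (since $r_2=r_3=0$), and has derivative $\pi i$ at $s=0$ (by a direct residue/dogbone computation around the collapsing cut); Frobenius theory at the regular singular point $s = 0$ then pins down $\hat{J}$ uniquely.

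\textbf{Part (ii).} Substitute $u = r_3 + t(r_1 - r_3)$ in $J_{3,1}$: using $\kappa = (r_1 - r_3)/(r_2 - r_3)$, one checks algebraically that
\begin{equation*}
(u - r_1)(u - r_2)(u - r_3) = (r_1 - r_3)^3 \, t(1 - t)\bigl(1/\kappa - t\bigr),
\end{equation*}
so $R(u,s)\, du = 3^{-1/2}(r_1 - r_3)^{5/2}\sqrt{t(1-t)(1/\kappa-t)}\, dt$ with branch and sign fixed by part (i); this gives \eqref{eq:t23}. The substitution $u = r_3 + t(r_2 - r_3)$ in $J_{3,2}$ produces \eqref{eq:t21} analogously.

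\textbf{Part (iii).} Insert the root asymptotics from Lemma \ref{lem1}(ii): $r_1 = -2 - \sqrt{\sm} + O(\sm)$, $r_2 = -2 + \sqrt{\sm} + O(\sm)$, $r_3 = 1 - \sm/3 + O(\sm^2)$, so that $r_1 - r_3 = -3 - \sqrt{\sm} + O(\sm)$ and $1/\kappa = 1 - 2\sqrt{\sm}/3 + O(\sm)$. The prefactor $(r_1 - r_3)^{5/2}$ in \eqref{eq:t23} is analytic in $\sqrt{\sm}$ with leading value $(-3)^{5/2}$ under the branch fixed in (i). The key step is the asymptotic behavior of $I(\delta) := \int_0^1 \sqrt{t(1-t)(1-\delta - t)}\,dt$ as $\delta := 1 - 1/\kappa \to 0$: the zero of the integrand at $t = 1 - \delta$ coalesces with the endpoint $t = 1$, producing a logarithmic term at order $\delta^2$. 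Splitting off the singular behavior near $t = 1$ via $1 - t = \delta \tau$ and matching to the outer region yields $I(\delta) = \tfrac{4}{15} + c_1 \delta + c_2 \delta^2 \ln \delta + O(\delta^2)$ with explicit constants; multiplying by the expansion of $-3^{-1/2}(r_1 - r_3)^{5/2}$ reproduces \eqref{js43}. The formula \eqref{ls43} for $L$ follows immediately from $L = 2 J'$ (Proposition \ref{Ident1}), since the derivative of the $\sm\ln\sm$ term produces the dominant $\ln\sm$ of $L$.

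\textbf{Part (iv).} Near $s = 0$, Lemma \ref{lem1}(i) gives $r_2 - r_3 = 2i\sqrt{s}(1 + O(\sqrt{s}))$ and $\kappa \sim 3i/(2\sqrt{s}) \to \infty$. Substituting into \eqref{eq:t21} and expanding $\sqrt{\kappa - t} = \sqrt{\kappa}(1 + O(\kappa^{-1}))$ gives $\hat{J}(s) = \pi i s + O(s^2 \ln s)$ (using $\int_0^1 \sqrt{t(1-t)}\, dt = \pi/8$), which also recovers \eqref{eq:defJh}. For the second claim in \eqref{eq:t21a}, path-additivity yields $J_{3,1} = J_{3,2} + J_{2,1}$, hence $\hat{J} = J - 2J_{2,1}$. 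The parametrization of part (ii) applied to $J_{2,1}$, with $r_2 - r_1 = 2\sqrt{\sm} + O(\sm)$ and $\tilde\kappa := (r_3 - r_1)/(r_2 - r_1) \sim 3/(2\sqrt{\sm}) \to \infty$, gives $2J_{2,1} = -\pi \sm + O(\sm^2 \ln \sm)$ by the same large-modulus expansion, yielding \eqref{eq:t21a}. The main obstacle is the logarithmic extraction in part (iii): the matched-asymptotic analysis near the coalescence $t = 1 - \delta \to 1$ is what controls the $\sm\ln(-\sm)$ term with the precise coefficient $-i/2$; everything else is essentially bookkeeping with roots, branches, and path-additivity.
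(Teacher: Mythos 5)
Your proposal is correct but takes a genuinely different route from the paper at two places. For part (i), the paper does not invoke contour deformation: it observes that the $J_{i;j}$ solve the linear ODE \eqref{eq:difeq00}, computes the $s\to 0$ asymptotics of $L_{3,1}$ directly by splitting $1/R(u,s) = 1/\sqrt{u^2+s} + Q_1(u,s)$ with $Q_1$ regular at $s=0$ and integrating each piece in closed form (the $\sinh^{-1}$ change of variable producing the $\ln s$), and then identifies $J_{3,1}$ as $\tfrac12 J$ by matching Frobenius data at the regular singular point. Your dogbone argument, together with the residue computation for $\hat J'(0)$, is a cleaner, more geometric alternative that dispenses with the Frobenius matching for $J=2J_{3,1}$ itself, though you still lean on Frobenius uniqueness to pin down $\hat J$. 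For part (iii), the paper again uses the ``split $R$ and integrate'' device near $s=-\tfrac43$, whereas you reduce everything to the scalar integral $I(\delta)=\int_0^1\sqrt{t(1-t)(1-\delta-t)}\,dt$ and extract the $\delta^2\ln\delta$ behavior by matched asymptotics near the coalescence $t\to 1$. This works, but deserves a word of caution beyond calling it bookkeeping: since $\delta = 1-1/\kappa \sim \tfrac23\sqrt{\sm}$ is half-integer in $\sm$ and so is the prefactor $(r_1-r_3)^{5/2} = (-3)^{5/2}\bigl(1+\tfrac56\sqrt{\sm}+O(\sm)\bigr)$, you must exhibit the exact cancellation of the $O(\sqrt{\sm})$ contributions before the surviving $\delta^2\ln\delta$ term is seen to give $\sm\ln(-\sm)$ with the stated coefficient $-\tfrac i2$. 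Concretely, $I'(0)=-\tfrac13$, and $-\tfrac13\cdot\tfrac23\sqrt{\sm} = -\tfrac29\sqrt{\sm}$ exactly cancels $\tfrac4{15}\cdot\tfrac56\sqrt{\sm}=\tfrac29\sqrt{\sm}$; without doing this, a branch or sign slip (e.g.\ in fixing $(-3)^{5/2}=9\sqrt3\,i$ so that the leading term is $-\tfrac{24i}{5}$) would not be caught, and the half-integer powers are precisely what the Frobenius structure says cannot survive. Parts (ii) and (iv) match the paper's approach essentially verbatim.
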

For the proof, we note that $J_{i;j}$ are solutions of  \eqref{eq:difeq00}. To
identify them, we simply have to determine their behavior at $s=0$. We note that $\sqrt{P_0(u)}$ has a square root singularity
at $u=-3$ thus $\int_{-3-3s}^{-1}1/R(u,s)$ =$\int_{-3}^{-1}1/R(u,0)+o(1)$ as $s\to 0$.
Thus
\begin{equation}
  \label{eq:eqJ1}
  L_{3,1}=\int_{{\ro}_3}^{-3}\frac{du}{R(u,s)} +o(1)\ \ \ \ (s\to 0)
\end{equation}
We re-express $R(u,s)$ as its approximation where we discard $u^3$
plus the corresponding difference:
\begin{equation}
  \label{eq:decom}
 \frac{1}{\sqrt{u^3/3+u^2+s}}= \frac{1}{\sqrt{u^2+s}}+Q_1(u,s)
\end{equation}
where
\vspace{-0.25cm}\begin{equation}
  \label{eq:eqQ_1}
  Q_1(u,s):=-\frac{3u^3}{\sqrt{3u^3+9u^2+9s}\, \sqrt{u^2+s}\, \left(3\sqrt{u^2+s}+\sqrt{3u^3+9u^2+s}\right)}
\end{equation}\vspace{-0.25cm}
We note that $Q_1$ is continuous at $s=0$,
and, for $u<0$,
\begin{equation}
  \label{eq:q10}
  Q_1(u,0)={\frac {\sqrt {3}}{\sqrt {u+3} \left( 3+\sqrt {3}\sqrt {u+3} \right)
}}
\end{equation}
 and thus
\vspace{-0.25cm}\begin{multline} \label{eq:eq4}
   L_{3,1}=\int_{{\ro}_3}^{-3}\frac{1}{\sqrt{u^2+s}}\, du+\int_{r_3}^{-3}\,  Q_1(u,0) \, du +o(1)\\
   =\int_{{\ro}_3}^{-3}\frac{1}{\sqrt{u^2+s}}\, du+\int_{0}^{-3}\frac {\sqrt {3}}{\sqrt {u+3} \left( 3+\sqrt {3}\sqrt {u+3} \right)}\, du +o(1)
\end{multline}\vspace{-0.25cm}
With the change of variable $u=-i\sqrt{s}\,\,\sinh v$ we get
\begin{equation}
  \label{eq:eqin1}
  \int_{-i\sqrt{s}}^{-3}\frac{1}{\sqrt{u^2+s}}du=-\int_{\frac{i\pi}{2}}^{\sinh^{-1}(3/\sqrt{s})}dv=\frac{1}{2}i\pi-\sinh^{-1}(3/\sqrt{s})=\frac{\pi i}{2}-\ln 6+\frac{1}{2}\ln s
\end{equation}
The second integral in \eqref{eq:eq4} is $-2\ln 2$ and thus
\begin{equation}
  \label{eq:finL23}
  L_{3,1}=\frac{1}{2}\ln s-\ln(24)+\frac{\pi i}{2}+o(1)
\end{equation}
Applying Frobenius theory to \eqref{eq:difeq00} we get
\begin{equation}
  \label{eq:FJ}
  J_{3,1}=A \left[ 1-\left( {\frac {5}{96}}+{\frac {5}{48}}\,\ln   s
 \right) s+\cdots \right] +B \left( s-{\frac {5}{96}}\,{s}^{2}+\cdots
 \right)
\end{equation}
We have $L_{3,1}=2J_{3,1}'$, and thus
\begin{equation}
  \label{eq:cfJ}
  L_{3,1}=2\,A \left( -{\frac {5}{32}}-{\frac {5}{48}}\,\ln  \left( s \right)
 \right) +2\,B+o(1)
\end{equation}
Comparing with \eqref{eq:finL23} we get (\ref{eq:solAB})
and thus  $J_{3,1}$ has the asymptotic expansion \eqref{eq:FJ} with
$A,B$ given by \eqref{eq:finL23}. It follows that $J(s)=2J_{3,1}$. Similarly
one can show that $J_{3,2}(0)=0$ and $L_{3,2}(0)=\pi i$ implying $\hat{J}=J_{{\tsT}}$.

(ii) The change of variable $u={\ro}_3+t({\ro}_1-{\ro}_3)$ transforms $J_{3,1}$ in \eqref{js01}
into \eqref{eq:t23}; the other $J_{i;j}$ are dealt with similarly.

(iii), (iv) The proof is similar to that of (i).

\begin{Lemma}\label{eps1}
There is some $0<\eta_1<1/100$ such that $|s|<2\eta_1$ implies\newline
$\left|J(s)+\frac{24}{5}+\(\ln 24+\tfrac12 -\tfrac12 \pi i \)s-\tfrac12 s\ln s\right|<|s|^{3/2}$
 and
 %\begin{equation}
  % \label{eq:estJ}
  $ |\hat{J}(s)-\pi i s|<|s^{3/2}|$,
% \end{equation}
\newline
whereas $|\sm|=|s+4/3|<2\eta_1$ implies
$\left|J(s)+\frac{24i}{5}+\(\tfrac12 \pi-i\ln 24 -\tfrac12 i\)\sm +\tfrac12
i \sm \ln(-\sm)\right|<|\sm|^{3/2}, and $
 $|J(s)-\hat{J}(s)+\pi \sm|<|\sm|^{3/2}$
and $\Im L(s)>\max(4|\ln s|/5,2|\Re L(s)|)>4$.
In particular $|J(s)+24/5|<\sqrt{|s|}$ for $|s|<2\eta_1$
and $|J(s)+24i/5|<\sqrt{|\sm|}$ for $|\sm|<2\eta_1$.
\end{Lemma}

\begin{proof}
This follows directly from \eqref{eq:cfJ}, Proposition \ref{js0}, \eqref{js43}, \eqref{ls43} and \eqref{eq:t21}.
\end{proof}

\vspace{-0.25cm}
Finally,
\begin{Lemma}\label{bet}
We have $\beta:=\inf_{s\in\newD6p}|J(s)|>0$ and
$\beta_1:=\inf_{s\in\newD6p,|s+4/3|>\epsilon}|J_{{\tso}}(s)|>0$ for any $\epsilon>0$.
\end{Lemma}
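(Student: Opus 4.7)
The plan is to reduce the statement to an assertion of non-vanishing of two continuous functions on compact sets, and then verify non-vanishing separately near the singular boundary points (using the already-proved local expansions) and on the remaining compact set (using the integral representations).

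First I would establish the setup: $\overline{\newD6p}$ is compact in $\overline{\mathbb{H}}$, and $J$ extends continuously to it, with the boundary values $J(0)=-24/5$ and $J(-4/3)=-24i/5$ provided by Proposition \ref{js0}(iii) (and restated in Lemma \ref{eps1}). Similarly, writing $J_{2;1}=(J-\hat J)/2$, the function $J_{2;1}$ extends continuously to the compact set $\overline{\newD6p}\cap\{|s+4/3|\geq\epsilon\}$, with $J_{2;1}(0)=-12/5$ since $\hat J(0)=0$ by \eqref{eq:defJh}. Thus both infima are attained and the lemma reduces to showing the two functions have no zeros on their respective compact domains.

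Next I would handle neighborhoods of the singular boundary points using Lemma \ref{eps1}. For $|s|<2\eta_1$ we have $|J(s)+24/5|<\sqrt{|s|}$, so $|J(s)|\geq 24/5-\sqrt{|s|}>4$ once $\eta_1$ is small enough; symmetrically $|J(s)|>4$ for $|s+4/3|<2\eta_1$. For $J_{2;1}$ near $s=0$, the estimate $|\hat J(s)-\pi i s|<|s|^{3/2}$ combined with the expansion of $J$ gives $|J_{2;1}(s)|\geq 12/5-O(\sqrt{|s|})>2$ for $|s|$ small. This leaves, in each case, a compact region $K$ (resp.\ $K'$) bounded away from the singular points.

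On $K$ I would use the integral representation from Proposition \ref{js0}(ii),
\[
J(s)=-\tfrac{2}{\sqrt{3}}\,(r_1-r_3)^{5/2}\!\int_0^1\!\sqrt{t(1-t)(\kappa^{-1}-t)}\,dt,
\]
noting that the algebraic prefactor $(r_1-r_3)^{5/2}$ is continuous and nonzero on $K$ (the roots $r_1,r_2,r_3$ can coalesce only at $s\in\{0,-4/3\}$, by the discriminant computation implicit in Lemma \ref{lem1}). It then remains to show the elliptic-integral factor does not vanish as $s$ varies over $K$; the analogous representation \eqref{eq:t21} handles $J_{2;1}=J_{3;1}-J_{3;2}$. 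The main obstacle is exactly this last step: verifying that the period integrals do not vanish uniformly in the compact parameter region. I would handle it by either (a) expressing the integrals as Gauss hypergeometric functions ${}_2F_1$ whose non-vanishing on the relevant $\kappa$-region is classical, or, more conceptually, (b) invoking Legendre's period relation for the non-degenerate elliptic curve $w^2=u^3/3+u^2+s$: the two periods are linearly independent over $\RR$ with Wronskian $\kappa_0\neq 0$ (the same constant entering the definition \eqref{defK} of $\mathcal{K}$), which forces each individual period to be nonzero at every non-degenerate fiber. Combined with continuity on the compact set $K$ (resp.\ $K'$), the extreme value theorem then produces the required positive lower bounds $\beta$ and $\beta_1$.
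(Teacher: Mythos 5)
Your reduction is correct and matches the paper's framework: $J$ and $J_{2;1}$ extend continuously to $\overline{\newD6p}$, the local expansions of Lemma~\ref{eps1} handle neighborhoods of $s=0$ and $s=-4/3$, and what remains is to show the integral representation~\eqref{eq:t23} has a nonvanishing integral factor on a compact set bounded away from the singular points, with the prefactor $(r_1-r_3)^{5/2}$ nonzero there (and for $J_{2;1}$ the factor $(r_2-r_1)^{5/2}$ explains the $|s+4/3|>\epsilon$ restriction). So far so good.

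The gap is at the step you yourself flag as the ``main obstacle'': you do not actually prove that $\int_0^1\sqrt{t(1-t)(\kappa^{-1}-t)}\,dt\neq 0$. Your suggestion (a) (``classical non-vanishing of ${}_2F_1$ on the relevant $\kappa$-region'') is not a theorem; ${}_2F_1$ does have zeros for complex argument, so you would still have to locate the $\kappa$-region and rule out zeros there, which is essentially the missing computation. Your suggestion (b) is based on an incorrect inference. The Wronskian $\kappa_0$ is the Wronskian of $J$ and $\hat J$ as \emph{functions} of $s$; it being nonzero says they are linearly independent as solutions of \eqref{eq:difeq00}, which only excludes a common double zero, not simple zeros of $J$ alone. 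It does not say that, for a fixed $s$, the two complex numbers $J(s),\hat J(s)$ are $\RR$-linearly independent. That pointwise $\RR$-independence is a classical fact for the \emph{first-kind} periods $L,\hat L$ (they span a lattice), but $J,\hat J$ are second-kind (quasi-)periods, for which no such standard non-vanishing statement holds; Legendre's relation only prevents both from vanishing simultaneously.

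What the paper does instead is short and elementary: using the root bounds of Lemma~\ref{lem1} it shows $|r_1-r_3|>2$ on $\HH$ and $\inf_{s\in\newD6p}|\kappa|>0$, and then checks directly that the integrand $\sqrt{t(1-t)(\kappa^{-1}-t)}$ lies in the \emph{open fourth quadrant} for all $t\in(0,1)$ and all $s\in\overline{\newD6p}$. Since a continuous function with values in an open convex cone not containing $0$ has nonzero integral, the elliptic-integral factor is nonzero, and compactness then produces the uniform bound. To repair your proof, replace (a)/(b) by this quadrant argument (which requires pinning down, from Lemma~\ref{lem1}, the region in which $\kappa^{-1}$ lives as $s$ ranges over $\overline{\newD6p}$), or by any other explicit argument localizing the value of the integral; no period-lattice or Wronskian magic is available here.
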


\begin{proof}
By the second line of \eqref{eq:eqr1} we have $|{\ro}_1-{\ro}_3|>2$ for all $s\in\mathbb{H}$. By
\eqref{kap} we have $\inf_{s\in\newD6p}|\kappa|>0$.
Now the integral in \eqref{eq:t23} does not vanish for $s\in\overline{\newD6p}$, since the integrand is in the open fourth quadrant for all $t\in(0,1)$; therefore $J\neq 0$. The conclusion then follows from continuity of $J$ in $s\in\overline{\newD6p}$.

The proof for  $J_{{\tso}}$ is similar except that there is a factor $({\ro}_2-{\ro}_1)^{5/2}$ which can vanish when $s=-4/3$, therefore we need the additional condition $|s+4/3|>\epsilon$ in this case.

\end{proof}

\subsection{Conformal mapping of the upper half plane $\mathbb{H}$ by $\mathcal{K}:=\hat{J}/J$}\label{Schwarzian}
After the substitution $s=-4t/3$ equation
\eqref{eq:difeq00} becomes a standard hypergeometric equation
\begin{equation}
  \label{eq:hypg}
  t(1-t)\frac{d^2\JN}{dt^2}-\frac{5}{36}\JN=0
\end{equation}
   where the associated hypergeometric function is degenerate, with $c=0$ \cite{Abramowitz}. Since the conformal map of
ratios of solutions of \eqref{eq:difeq00} does not appear to follow immediately
from standard references such as  \cite{Nehari} or \cite{Abramowitz}, we provide
for completeness an independent analysis.

\begin{Proposition}\label{jjr}{
(i) $\mathcal{K}(s):={\hat{J}(s)}/{J(s)}$ is a conformal map of the upper half plane into the
interior of $\mathcal C_2$ where   $\mathcal C_2$ consists of a semicircle in the upper half plane
centered at $\frac12$ with radius $\frac12$, an arccircle $C_3$ tangent to the imaginary line at
$0$ passing through $e^{-\pi i/3}$ and the  reflection of $C_3$ about $x=\frac12$. In
particular $C_2\bigcap \mathbb{R}=\{0,1\}$; see Fig. \ref{m}.

(ii) If $|s|>5$ then
$\Im \mathcal{K}(s)<-\frac{2}{5}$.
Furthermore,  with
$\eta_1$ as  in Lemma \ref{eps1}, if  $\eta_2>0$ is small enough, then
\begin{equation}\label{jje}
\sup_{|s|<\eta_1,|\sm|<\eta_1,0<\Im s<\eta_2}\left|\Im \mathcal{K}(s)\right|>\eta_2
>0
\end{equation}}

(iii) We have, for small $s$ and $\sm$ resp.,
\begin{equation}
  \label{eq:cK}
  \left|\mathcal{K}(s)+\frac{5\pi i}{24}s\right|\Le |s^{3/2}|,\ \ \ \ \  \left|\mathcal{K}(s)-1-\frac{5\pi i}{24}\sm\right|\Le |\sm|^{3/2}
\end{equation}

\end{Proposition}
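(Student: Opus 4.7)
The plan is to treat the three parts in the order (iii), (i), (ii), exploiting that $\mathcal{K}=\hat{J}/J$ is a Schwarz triangle ratio for the hypergeometric equation \eqref{eq:hypg} (obtained from \eqref{eq:difeq00} via $s=-4t/3$) with parameters $a=-1/6$, $b=-5/6$, $c=0$, read off from $ab=5/36$, $a+b=-1$. First I would dispatch part (iii): from Lemma \ref{eps1}, $J(0)=-24/5\neq 0$ and $\hat{J}(s)=\pi is+O(s^{3/2})$, so a direct geometric-series expansion of $\hat{J}/J$ yields $\mathcal{K}(s)=-(5\pi i/24)s+O(s^{3/2})$. Near $s=-4/3$, using $\hat{J}=J-2J_{\tso}=J+\pi\sm+O(\sm^2\ln\sm)$ together with $J(-4/3)=-24i/5$, the same idea gives $\mathcal{K}(s)-1=\pi\sm/J+O(\sm^{3/2})=(5\pi i/24)\sm+O(\sm^{3/2})$.

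For part (i) the strategy is the classical Schwarz triangle argument. I would check in turn: (a) $\mathcal{K}$ is analytic on $\mathbb{H}$ because $J$ has no zeros there --- the constant Wronskian $J\hat{J}'-J'\hat{J}=-24\pi i/5$ prevents $J,\hat{J}$ from vanishing simultaneously, and a Schwarz-reflection/monodromy argument based on the reality of the coefficients of \eqref{eq:difeq00} on $\mathbb{R}\setminus\{0,-4/3\}$ rules out any zero of $J$ in the open upper half plane; (b) $\mathcal{K}$ extends continuously to $\overline{\mathbb{H}}\cup\{\infty\}$ with $\mathcal{K}(0)=0$ and $\mathcal{K}(-4/3)=1$ (from part (iii)) and $\mathcal{K}(\infty)=e^{-\pi i/3}$, the last of these being obtained from the hypergeometric connection formulas at $\infty$ where the exponents $-1/6,-5/6$ have difference $2/3$; (c) on each of the three open intervals $(-\infty,-4/3)$, $(-4/3,0)$, $(0,\infty)$ the Schwarzian derivative $\{\mathcal{K},s\}$ is real, so the corresponding image is an arc of a circle (or straight line); (d) the exponent differences $1-c=1$ at $s=0$, $c-a-b=1$ at $s=-4/3$, and $|a-b|=2/3$ at $\infty$ give Schwarz angles $0,0,2\pi/3$, that is, cusps at the vertices $0,1$ and a $2\pi/3$ opening at $e^{-\pi i/3}$. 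Combining (c) and (d) with the tangent directions read off from part (iii), the three arcs are pinned down: the upper semicircle $|z-1/2|=1/2$ is the image of $(-4/3,0)$, $C_3$ tangent to the imaginary axis at $0$ and passing through $e^{-\pi i/3}$ is the image of $(0,\infty)$, and its reflection about $x=1/2$ is the image of $(-\infty,-4/3)$. Simple injectivity of this boundary parametrization together with the argument principle then gives the claimed conformal bijection onto the interior of $\mathcal{C}_2$.

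Part (ii) follows from this picture: as $|s|\to\infty$ in $\mathbb{H}$ the image approaches the vertex $e^{-\pi i/3}$ whose imaginary part $-\sqrt{3}/2$ is comfortably below $-2/5$, and I would deduce the concrete threshold $|s|>5$ from the explicit elliptic-integral representations \eqref{eq:t23}, \eqref{eq:t21} and continuity on the compact set $\{|s|\geq 5\}\cap\overline{\mathbb{H}}\cup\{\infty\}$. The near-singularity bound \eqref{jje} is immediate from part (iii): for $\Re s$ of order $\eta_1$ and $0<\Im s<\eta_2$ one has $\Im\mathcal{K}(s)=-(5\pi/24)\Re s+O(|s|^{3/2})$, which can be made to exceed $\eta_2$ in modulus by choosing $\eta_2$ small relative to $\eta_1$; the case $|\sm|<\eta_1$ is handled symmetrically via the second expansion of part (iii). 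The main obstacle is step (d) above, in particular the explicit identification of the third vertex as $e^{-\pi i/3}$, which requires tracking the hypergeometric connection coefficients at infinity (equivalently, evaluating the elliptic integrals \eqref{eq:t23}, \eqref{eq:t21} in the limit $|s|\to\infty$); everything else is either a local Frobenius computation or a routine application of Schwarz reflection.
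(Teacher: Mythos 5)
Your overall strategy for part (i) — treating $\mathcal{K}=\hat J/J$ as a Schwarz triangle map for the hypergeometric equation \eqref{eq:hypg} with $a=-1/6$, $b=-5/6$, $c=0$, and pinning the triangle by its vertex values and angle data — is a genuinely different route from the paper, which instead derives explicit Euler integral representations \eqref{eq:intrep}, \eqref{eq:f1int} for the normalized ratio $f_a$, determines the image $\mathcal{C}_1$ directly (Lemma~\ref{ffa}), and then transfers to $\mathcal{K}$ by the M\"obius transformation \eqref{eq:formK}. Both are legitimate; the paper's integral representations pay off in part (ii) by giving the explicit quantitative bound, which the triangle-group picture does not yield for free.

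However, step (d) of your argument contains a concrete error. The exponent differences $|1-c|=1$, $|c-a-b|=1$, $|a-b|=2/3$ give Schwarz angles $\pi,\pi,2\pi/3$, \emph{not} $0,0,2\pi/3$. There are no cusps at the images of $s=0$ and $s=-4/3$. In fact, because $\hat J$ is precisely the Frobenius solution that vanishes at $0$ (and hence is analytic there, with no logarithm), the ratio $\mathcal{K}=\hat J/J$ satisfies $\mathcal{K}'(0)=\hat J'(0)/J(0)=-5\pi i/24\neq 0$: the map is \emph{conformal} at $s=0$, the interior angle is $\pi$, and the boundary of the image is a $C^1$ curve passing smoothly through $0$ (same at $1$). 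This matches the geometry: both the upper semicircle $|z-\tfrac12|=\tfrac12$ and $C_3=\{|z-1|=1\}$ are tangent to $i\mathbb{R}$ at $0$, but they lie on \emph{opposite} sides ($\Im z>0$ and $\Im z<0$ respectively), so the boundary is smooth, not cusped. The paper's Lemma~\ref{ffa}, item (\ref{itb}), reaches the same conclusion by analyzing the local behavior $a+cs\ln s$ of $f_a$ and finding ``angle change $\pi$'', i.e.\ tangency. Your invocation of ``cusps'' would actually produce a different circular-arc triangle, and the assertion that the exponent difference $1$ forces a zero angle is false (it is $0$ only when the exponent difference is literally $0$). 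Fortunately, your proposal also invokes the tangent directions from part (iii), and that input alone suffices to pin the three arcs if you replace the angle $0$ claim with the correct angle $\pi$; but as written the step is wrong and should be repaired.

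Two smaller points. For the bound in (ii), a continuity-on-compacts argument establishes \emph{some} negative upper bound for $\Im\mathcal{K}$ on $|s|\ge 5$ but not the explicit $-2/5$; the paper obtains the quantitative constant from the integral estimates $|f_2-1|<4/15$, $|f_1(s)|<4|s|^{1/6}/5$, so some explicit computation of this kind is unavoidable. Also, the condition in the displayed inequality \eqref{jje} as printed, $|s|<\eta_1$ and $|s^-|<\eta_1$, describes an empty set (since $|s-s^-|=4/3$ and $\eta_1<1/100$); the paper's own proof considers the complementary region $|s|>\eta_1$, $|s^-|>\eta_1$, so this is evidently a typo, and your reading/argument addresses a near-singularity variant rather than the intended statement — worth flagging explicitly.

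Part (iii) of your proposal is correct and essentially identical to the paper's (a one-line consequence of Lemma~\ref{eps1}).
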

We first prove a result for a M\"obius transformation of $\mathcal{K}$.
\begin{Lemma}\label{ffa}
  Let $$f_a(s)=\frac{_2F_1(-\tfrac16,\tfrac56;\tfrac53,-\tfrac{4}{3s})}{s^{2/3} ~_2F_1(-\tfrac56,\tfrac16;\tfrac13,-\tfrac{4}{3s})}$$
Then $f_a$ maps the upper half plane conformally into the interior of
$\mathcal{C}_1$ where $\mathcal{C}_1$ consists of the segment $I_1=[0,a]$,
where the number $a>0$ is given in \eqref{eq:eqvals} below, followed by an arccircle tangent at $1$ to it and at $e^{4\pi i/3}$ to
$I_2=e^{4\pi i/3}I_1$, and then followed by $I_2$. Furthermore, $|f_a(s)|<\frac{1}{4}$ if $|s|>5$.
\end{Lemma}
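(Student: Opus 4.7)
The plan is to realize $f_a$ as a Schwarzian triangle map arising from the hypergeometric equation \eqref{eq:hypg}, read off the shape of its image from the exponent differences at the three regular singular points, and then bound $|f_a|$ on $\{|s|>5\}\cap\HH$ using the explicit asymptotic at infinity.

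After the substitution $s=-4t/3$, equation \eqref{eq:difeq00} becomes the hypergeometric equation \eqref{eq:hypg} with parameters $(\alpha,\beta,\gamma)=(-\tfrac16,-\tfrac56,0)$. Its two Kummer solutions based at $t=\infty$ are $t^{1/6}\,{}_2F_1(-\tfrac16,\tfrac56;\tfrac53;1/t)$ and $t^{5/6}\,{}_2F_1(-\tfrac56,\tfrac16;\tfrac13;1/t)$. Rewriting $1/t=-4/(3s)$ and absorbing the factor $(-4/3)^{-2/3}$ into a constant, $f_a(s)$ is exactly the ratio of these two solutions. By Schwarz's classical theorem on ratios of solutions of a Fuchsian equation with three regular singular points, $f_a$ is then a univalent conformal map of $\HH$ onto a curvilinear triangle whose boundary consists of arcs of generalized circles meeting at the images of $\{0,-4/3,\infty\}$ with interior angles $\pi|1-\gamma|$, $\pi|\gamma-\alpha-\beta|$, $\pi|\alpha-\beta|$.

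A direct indicial computation at $t=0,1,\infty$ yields exponent differences $1,1,2/3$, hence triangle angles $\pi,\pi,2\pi/3$. The two $\pi$-angle vertices are degenerate: the two arcs meeting at such a vertex share a common tangent, so the boundary has a single genuine corner, located at $f_a(\infty)=0$ with opening $2\pi/3$. From $f_a(s)=s^{-2/3}(1+O(s^{-1}))$, as $s\to\infty$ along $\RR_+$ and along $\RR_-$ in $\overline{\HH}$ the image of the boundary approaches $0$ along the rays of arguments $0$ and $4\pi/3$; this identifies the two segments of $\partial f_a(\HH)$ emanating from $0$ as $I_1\subset\RR_+$ and $I_2=e^{4\pi i/3}I_1$. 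The remaining boundary piece is then forced to be a circular arc tangent to $I_1$ at $a:=f_a(0)$ and to $I_2$ at $e^{4\pi i/3}a=f_a(-4/3)$, the $e^{4\pi i/3}$-symmetry being inherited from the monodromy of $s^{-2/3}$ that permutes the two Kummer solutions. The explicit value of $a$ is obtained from Kummer's connection formulas between the solutions at $t=\infty$ and a basis at $t=0$; in the degenerate case $\gamma=0$ one passes to the limit (the second solution at $0$ picks up a logarithm), and evaluating the ratio at $s=0$ produces $a$ as the $\Gamma$-function ratio recorded in \eqref{eq:eqvals}. Global univalence follows from the Schwarz triangle theorem once no angle exceeds $\pi$ and the arcs close up consistently.

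For the final assertion, when $|s|>5$ one has $|{-}4/(3s)|<4/15$, and term-by-term majorization of both hypergeometric series gives $|f_a(s)|\le |s|^{-2/3}(1+\eta)$ with $\eta$ small; combined with the localization of $f_a(\HH\cap\{|s|>5\})$ to the $2\pi/3$-wedge near $0$ (and the maximum principle applied on that wedge), this yields the bound $|f_a(s)|<1/4$. The main technical obstacle is the connection-coefficient computation producing the explicit $a$: the degeneracy $\gamma=0$ forces passage to the limit in the standard Kummer formulas, and one must carefully track which branch of $s^{2/3}$ is singled out by analytic continuation through $\HH$ so that the image lies on the correct side of $\mathcal{C}_1$.
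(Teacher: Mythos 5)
Your high-level approach—identifying $f_a$ as a Schwarz triangle function attached to the hypergeometric equation \eqref{eq:hypg} and reading the geometry off the exponent differences—is the same conceptual strategy the paper uses, and your computation of the exponent differences $1,1,\tfrac23$ and the identification of the genuine corner at $f_a(\infty)=0$ with angle $\tfrac{2\pi}{3}$ are correct. The paper, however, does not simply cite a triangle theorem: it reconstructs the conclusion from scratch via Frobenius theory, real-valuedness of Taylor coefficients, M\"obius images of real lines, and the argument principle. This difference matters, because there are two places where you invoke more than the standard statements deliver.

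First, the degenerate angles. The exponent differences at $t=0$ and $t=1$ are both exactly $1$ (the case $c=0$, $c-a-b=1$). This is a resonant/logarithmic case: the local monodromy is unipotent, the second Frobenius solution carries a $t\ln t$ singularity, and the ``angle $\pi$'' means the two boundary arcs are tangent rather than meeting transversally. The classical Schwarz triangle theorem is usually stated for exponent differences strictly inside $(0,1)$; at a value of exactly $1$ univalence and the Jordan-domain structure are not automatic and must be verified. The paper does exactly this (items (\ref{ita})--(\ref{(j)}) of its proof): it shows $f_1,f_2$ are each singular and nonvanishing at $0$ and $-\tfrac43$, extracts the $t\ln t$ local form, deduces the tangency directly, identifies the middle boundary piece as an arccircle by switching to a real-valued basis on $[-\tfrac43,0]$ and composing with a M\"obius map, and then applies the argument principle for global injectivity. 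Your proposal gestures at the tangency (``the two arcs share a common tangent'') but does not close the univalence argument; as written it appeals to a theorem whose hypotheses are not met in this resonant case.

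Second, and more concretely, the bound $|f_a(s)|<\tfrac14$ for $|s|>5$ does not follow from your estimate. You obtain $|f_a(s)|\le |s|^{-2/3}(1+\eta)$ with $\eta$ small. But $5^{-2/3}\approx 0.34>\tfrac14$, so on the boundary circle $|s|=5$ (where, by the maximum principle in the region $\{|s|>5\}\cap\HH$ with $f_a\to 0$ at $\infty$, the supremum is attained) your bound gives roughly $0.34(1+\eta)$, which is not less than $\tfrac14$ for any small nonnegative $\eta$. To get $<\tfrac14$ one needs genuine numerical control of the ratio of hypergeometric values at $|s|=5$, not just the leading power. The paper does this by bounding $|f_2|$ from below and $|f_1|$ from above using the Euler integral representations \eqref{eq:intrep} and \eqref{eq:f1int}, which supplies explicit constants; your series majorization as stated produces the wrong inequality. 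This is a genuine gap in the argument as you have written it.

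Your route to the explicit value of $a$ via Kummer connection coefficients (with the degenerate $\gamma=0$ limit) is a fine alternative to the paper's direct elementary evaluation of the integrals at $s=0$; both are valid. But as a proof of the lemma you would need to (i) replace the bare citation of the triangle theorem with a verification of univalence in the logarithmic case, and (ii) supply an estimate at $|s|=5$ sharp enough to actually yield $\tfrac14$.
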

\begin{figure}
  \centering
 \includegraphics[scale=0.2]{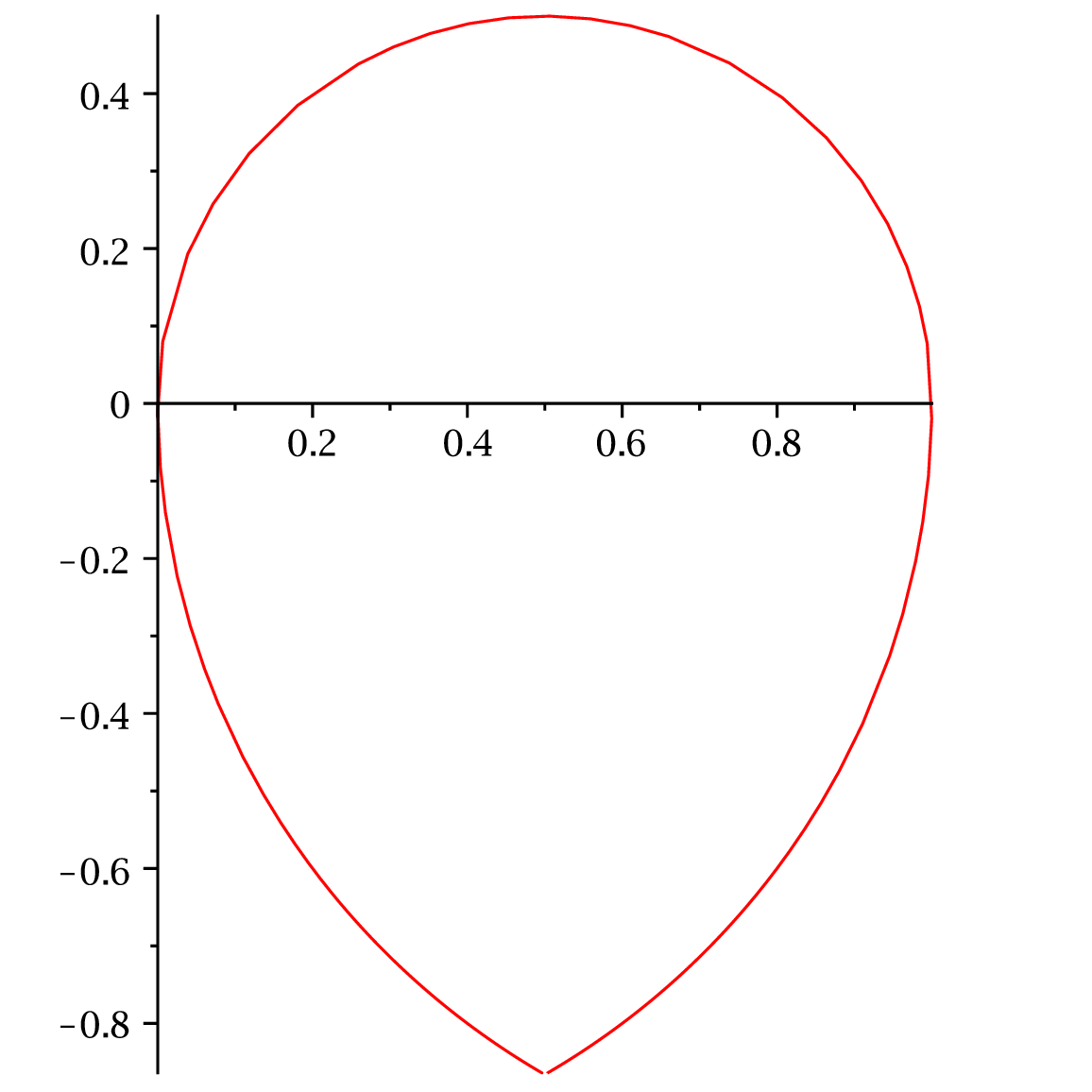}\ \  \includegraphics[scale=0.2]{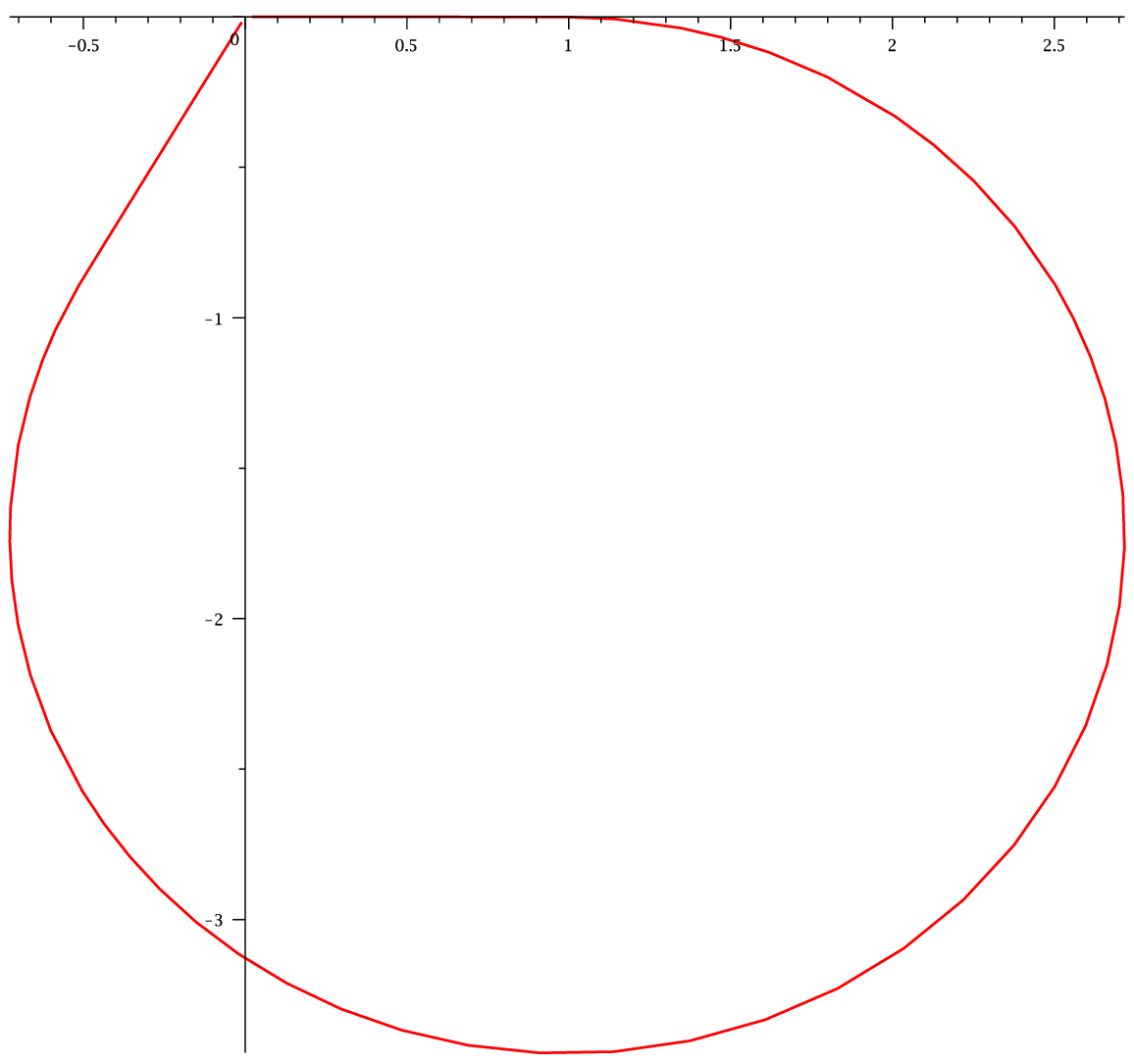}
  \caption{The image of $\HH$ under $\mathcal{K}$ ($f_a$) is the interior of the left
    (right, resp.) curve above.}\label{m}
  \label{fig:12}
\end{figure}
\begin{proof}
  \begin{enumerate}[(a)]
\item\label{itema} Since $t=\infty$ is a regular
singularity of \eqref{eq:hypg}, from the indicial
equation we see that there is a fundamental set of solutions of the form
\begin{equation}
  \label{eq:deff}
  f_2(s)=s^{5/6}A(1/s);\ \ f_1(s)=s^{1/6}B(1/s)
\end{equation}
where $A$ and $B$ are analytic and $A(0)=B(0)=1$. We choose the natural branch of the roots, where $s^{1/6}>0$ if $s>0$. Since \eqref{eq:hypg} has real coefficients, we can check that
$A$ and $B$ have real-valued Taylor coefficients.
\item
The differential equation for $g(z)=s^{5/6}J(-3/(4s));\ z=-3/(4s)$ is
\begin{equation}
  \label{eq:eqg}
  z(1-z)g''+\frac{1}{3}(1-z)g'+\frac{5}{36}g=0
\end{equation}
a  hypergeometric  equation in standard form. The solution analytic at $z=0$ is  $_2F_1(-\tfrac56,\tfrac16;\tfrac13,z)$,  \cite{Abramowitz} 15.5.1. Thus,
\begin{equation}
    \label{eq:eqj2}
    f_2(s)=s^{5/6}\,\,_2F_1(-\tfrac56,\tfrac16;\tfrac13,-\tfrac{4}{3s})
  \end{equation}
is the solution of \eqref{eq:hypg} with the properties in (\ref{itema}). By \cite{Abramowitz} 15.3.1,
\begin{equation}
  \label{eq:intrep}
  f_2(s)=\Gamma\(\frac13\)\Gamma\(\frac16\)^{-2}s^{5/6}\int_0^1 t^{-5/6}(1-t)^{-5/6}\(1+\frac{4t}{3s}\)^{5/6}dt
\end{equation}
Note that when $s$ is in the upper half plane
$\Im(1+{4t}/({3s}))^{5/6}<0$. Thus we have $s\in \HH\Rightarrow f_2(s)\ne 0$
and it follows that $f_a$ is analytic in $\HH$.

\item The function $f_1$ is given by
  \begin{equation}
  \label{eq:f1int}
  f_1=s^{1/6}\,\,\,_2F_1(-\tfrac16,\tfrac56;\tfrac53,-\tfrac{4}{3s})=\frac{2}{3}s^{1/6}\frac{\Gamma(\tfrac23)}{\Gamma(\tfrac56)^2}\int_0^1
t^{-\tfrac16}(1-t)^{-\tfrac16}(1+\tfrac{4t}{3s})^{\tfrac16}dt
\end{equation}
Indeed, \eqref{eq:f1int} solves \eqref{eq:hypg} and has the required behavior for large $s$.

\item  We note that $f_a=f_1/f_2$.  Note also that for $s>0$ or $s<-4/3$ the integrand is positive and
$f_1(s)$ does not vanish. The integrals in \eqref{eq:f1int} and
\eqref{eq:intrep} become elementary in the limit $s\to 0$ and we find that
\vspace{-0.25cm}\begin{equation}
  \label{eq:eqvals}
  a:=f_a(0)=\frac{1}{10}\frac{\Gamma \left(\frac{1}{6}\right)^3}{2^{2/3} 3^{1/3} \pi ^{3/2}}
\end{equation}\vspace{-0.25cm}
Similarly,  $f_a(-4/3)$ is also elementary and
\begin{equation}
  \label{eq:eqvals2}
  f_a(-4/3)=-e^{\frac{\pi i}{3}}a; \ \text{ and  also} \ \lim_{s\to\infty} f_a(s)=0
\end{equation}
\vspace{-0.25cm}
\item \label{ita} In fact, it follows just from Frobenius theory that
  neither $f_1$ nor $f_2$ are zero at $0$ or $-3/4$, thus they are not
  analytic there. Indeed, the
  indicial equation at $-3/4$ and $0$ has roots $0$ and $1$; the
  generic solution has a logarithmic singularity. If say, $f_1$ were
  analytic at zero, it is then analytic at $-3/4$ as well,
  else it would be single-valued. But if $f_1$ is analytic at $0$, the
  log singularity at $-3/4$ is also incompatible with the $5/6$
  branching at $\infty$. So $f_1$ is singular at both $1$ and $0$.
  Thus it is nonzero at those points (as the solution that vanishes
  say at $0$ corresponds to the solution $1$ of the indicial equation
  which is analytic by Frobenius theory).
\item \label{(g)}  Note that $(f_1/f_2)'=-W/f_2^2\ne 0$ where $W$ is the Wronskian of $f_1$ and $f_2$.
 Thus, $f_1/f_2$ maps $(0,\infty)$ one-to-one to the
  segment $(0,a)$ where $a=f_1(0)/f_2(0)$, see \eqref{eq:eqvals},  and $\infty$ is mapped to $0$.
\item \label{(k)} Similarly, by \eqref{eq:deff}, $f_1/f_2$ maps $(-\infty, 0)$
  one-to-one to the segment $e^{-4\pi i/3}(0,a)$, for the same $a$
by the symmetry implied by \eqref{eq:deff}, and, as before $-\infty$
mapped to zero.

\item \label{itb} By Frobenius theory and (\ref{ita}) above, both $f_1$
  and $f_2$ have a singularity which, to leading order, is of the form
  $t\ln t$ at $-\tfrac43$ ($0$, resp.), where $t=s+\tfrac43$ ($s$, resp.). Thus,
looking at the local mapping of a segment by $s\ln s$
near $s=0$, we see that at $0$ the angle change is $\pi$ and the orientation
of the arc is preserved. Thus,  the segment
  $[-\tfrac43,0]$ is mapped into a curve which is tangent to both $I_1$ and
  $I_2$, and the angle change at $-\tfrac43$ and $0$ is, in absolute value,
  $\pi$.

\item \label{(j)} We now determine this curve. Between $-\tfrac43$ and $0$ we take,
  once more relying on the real-valuedness of the coefficients at a
  different pair of independent solutions $f_3$ and $f_4$ which are
  real-valued and analytic on $[-\tfrac43,0]$. Then $f_3/f_4$ maps
  $[-\tfrac43,0]$ onto a segment on the real line. But since $f_1,...,f_4$
  solve \eqref{eq:hypg},  $f_3$ and $f_4$ are linear combinations
  with constant coefficients of $f_1$ and $f_2$. Thus $f_3/f_4$ is a
  M\"obius transformation of $f_1/f_2$, and the segment $[-\tfrac43,0]$ is
  mapped by $f_1/f_2$ onto a segment or an arccircle. Because the
  tangency showed in item (\ref{itb}), it must be an arccircle.

As in (\ref{(g)}) above, $(f_3/f_4)'\ne 0$ and thus $f_3/f_4$ is one-to
  one on $[-\tfrac43,0]$ and, since $f_a$ is a M\"obius transformation of
  $f_3/f_4$, $f_a$ is one-to-one as well on $[-\tfrac43,0]$.

Combining with (\ref{(g)}) and (\ref{(k)}) above, the image of $\HH$ is
int$(\mathcal{C}_1)$ and $f_a$ is one-to-one between $\RR$ and $\mathcal{C}_1$.

\item  By (\ref{(j)}) and  the argument principle, $f_a$ is one-to-one between $\HH$ and
int$(\mathcal{C}_1)$ (see also \cite{Lang} p. 227).

\item  If $|s|>5$ then, using the bound  $|\(1+\frac{4t}{3s}\)^{5/6}-1|<4/15$
in  \eqref{eq:intrep} it follows that
 $\left|f_2-1\right|<\frac{4}{15}$. Thus $|f_2(s)|>{7|s|^{5/6}}/{10}$.
Similarly, for $|s|>5$ we use \eqref{eq:f1int} to obtain $|f_1(s)|<{4|s|^{1/6}}/{5}$.
Therefore $f_a=f_1/f_2$ satisfies $|f_a(s)|<{8|s|^{-1}}/{7}<\frac{1}{4}$ if $|s|>5$.
  \end{enumerate}
\end{proof}

\begin{proof}[Proof of Proposition \ref{jjr}]
Since $f_a=f_1/f_2$ where $f_{1,2}$ solve \eqref{eq:difeq00}, as do $J$ and
$\hat{J}$, $\mathcal{K}$ is a linear fractional transformation of $f_a$, i.e.
\begin{equation}
  \label{eq:formK}
  \mathcal{K}(s)=a_1+\frac{a_2 f_a}{f_a+a_3}
\end{equation}
for some constants $a_{1,2,3}$ that we now determine. It follows from \eqref{eq:t21a} that
$\mathcal{K}(0)=0$ and $\mathcal{K}(-\tfrac43)=1$. For $\Re
s=0$ and $\Im s\to\infty$ the roots of $u^3/3+u^2+s$ approach the roots of
$u^3/3+s$, thus by Lemma \ref{lem1} we see that ${\ro}_2\sim i|3s|^{-1/3}$,
${\ro}_3\sim e^{-\pi i/6}|3s|^{-1/3}$, and ${\ro}_1\sim e^{-5\pi i/6}|3s|^{-1/3}$. It
then follows from \eqref{eq:t23} and \eqref{eq:t21} that
$\mathcal{K}(s)\to e^{-\pi i/3}$. The corresponding values for $f_a$
can be obtained directly using its definition since the hypergeometric functions can be calculated explicitly for $s=0,-\tfrac43,\infty i$ (see also \eqref{eq:eqvals}). We have $f_a(0)=a$, $f_a(-\tfrac43)=e^{4\pi i/3} a$, and $f_a(\infty i)=0$, which allows us to solve for $a_{1,2,3}$ and obtain
$$\mathcal{K}(s)=M(f_a(s));\ M(z):=e^{-\pi i/3}+{i \sqrt{3}z}\({z-e^{2\pi i/3}a}\)^{-1}$$
A simple calculation shows that the M\"obius transformation $M$ has the
properties: $M(I_1)$ is the arccircle through  $0$ and $e^{-\pi i/3}$,
tangent to $i\RR$, $M(I_2)$ is the
arccircle tangent to $i\RR$ passing through $e^{-\pi i/3}$ and $1$, and it maps the arccircle through  $a$ tangent at $z=e^{4\pi i/3}a$
to
  $\{z:\arg z= 4\pi i/3\}$ into the
arccircle  through $0$  tangent to $i\RR$ and to $1+i\RR$. It then follows from Lemma \ref{ffa}
that $\mathcal{K}$ maps the upper half plane into the interior of
$C_2$.

\z (ii) If $|s|>5$,  then by Lemma \ref{ffa}, $|f_a|<\tfrac14$ implying
$\left|i \sqrt{3}f_a\(f_a-e^{2\pi i/3}a\)^{-1}\right|
<\tfrac25 $ and thus using \eqref{eq:formK} we get $\Im
\mathcal{K}(s)<-\tfrac{\sqrt{3}}{2}+\tfrac{2}{5}<-\tfrac{2}{5}$.

Consider  $I$, the image under  $\mathcal{K}$ of region $\mathcal{\HH}\cap\{s:|s|>\eta_1\}\cap\{s:|\sm|>\eta_1\}$ and the compact set $I_{\epsilon}=\{z:\text{dist}(z,I_1)\Le \epsilon\}$. If $\epsilon$ is small enough then $I_{\epsilon}\subset \mathcal{K}(\HH)$ and thus dist$[\mathcal{K}^{-1}(I_\epsilon),\RR]$ is positive and increasing in $\epsilon$ implying \eqref{jje}.

(iii) This follows directly from Lemma \ref{eps1}.
\end{proof}

\section{Recurrence relations and Constants of Motion. Proof of Theorem\,\ref{recu1}(i)}\label{xjc}

We will use the integral equations \eqref{eq:eqds2n} and \eqref{eq:eqdx2n} to derive an asymptotic constant of motion formula for $x$ in the third quadrant.

\subsection{Notations}

\z (i) Denote $R_I(v)=R(v,s_I)$, ${J}_I(v)=J(v,s_I)$, etc., $R_n(v)=R(v,s_n)$, ${J}_n(v)=J(v,s_n)$, ${L}_n(v)=L(v,s_n)$, $J_n=J(s_n)$, $L_n=L(s_n)$ etc., and ${s_I}\h,^-=s_I+\frac 43$.

\z (ii) Denote, consistent with the notations of {Theorem}\,\ref{recu1},
\begin{equation}\label{defct}
    \mathcal{Q}=xJ,\ \ \ \   \mathcal{K}={\hat{J}}/{J},\ \ \ \  \mathcal{Q}_n={x_n}J_n,\ \   \mathcal{G}_n=\mathcal{Q}_n/x_0,\ \
  \mathcal{K}_n={\hat{J}_n}/{J_n}
\end{equation}

{\em{Note:}} since $  \mathcal{Q}_n$ is a large quantity (for large $x_0$) it is preferable to work with the normalized quantity $ \mathcal{G}_n$ which is $O(1)$.

\z (iii) We use $c_0,c_1,c_2\ldots$ to denote constants independent of $n,s_0,x_0,s_I,x_I$ etc., and
$\mathfrak{c}$ denotes a``generic''  such constant.

\z (iv) Consider the segment $\oldCa$ and its symmetric about the line $u=-1$, $\oldCb $, contained in $\mathcal{C}$:
\begin{equation}
  \label{eq:eqDab}
\oldCa
=\{t(1+i)/3:t\in[-1,1] \} \ \text{and}\  \oldCb
=\{-2+t(-1+i)/3:t\in[-1,1] \}
\end{equation}
($\oldCa$, $\oldCb$ are sub-segments of $[\frac{1+i}{3}, -(1+i)]$, respectively $[-(1+i),-2+\frac{-1+i}{3}]$, see Note\,\ref{chooseC}). Note that $u\in \oldCa$ if and only if $-2-\overline{u}\in \oldCb$, symmetry which will be used in the following, in conjunction with \eqref{sym_T} and with $|-2-\overline{u}|=|\um|$.

%\begin{multline}\label{sned0}
%s(u)=s_n-\frac{2J_n(u)}{x_n}-\frac{2}{x_n}\int_{u_n}^u \frac{s(v)-s_n}{R_n(v)+R(v,s(v))}dv\\+\frac{2}{x_n}\int_{u_n}^u \frac{R(v,s(v))L(v,s)}{x_n+L(v,s)}dv+\int_{u_n}^u \frac{784}{625(x_n+L(v,s))^4} dv \end{multline}
  %As long as \eqref{sned0} has a solution we can apply it to \eqref{eq:eqdx2n} to recover  a solution to the Painlev\'e equation P$_1$, see Note \ref{equiv}.

\subsection{Calculating the Poincar\'e map (the first return map)}\label{return_map}

The values of $s_{n},x_{n}$ are obtained by iterating the first return map. To establish its properties
consider \eqref{eq:eqds2n},\,\eqref{eq:eqdx2n} with initial conditions $(s_I,x_I)$:
\begin{align}
  \label{eq:eqds2I}
 & s(u)=s_I-2\int_{u_0}^u \(\frac{R(v,s(v))}{x(v)}-\frac{392}{625}\frac{1}{x(v)^4}\)dv \\
 & x(u)=x_I+\int_{u_0}^u \frac{1}{R(v,s(v))}dv \label{eq:eqdx2I}
\end{align}
Proposition\,\ref{prop1} shows that the system \eqref{eq:eqds2I},\, \eqref{eq:eqdx2I} has a unique solution for $u$ traveling once along $\mathcal{C}$ and for initial values $(s_I,x_I)\in\mathcal{R}$, a suitable region with the property that the final values  $(s_f,x_f)$ of $(s(u),x(u))$ when $u$ returns to $u_0$ are close to $(s_I,x_I)$, cf. \eqref{sfcsi},\,\eqref{xned}.

 Once this fact is proved, then $(s_n,x_n)$ are found by iterating the first return map
\begin{align}
  \label{first_rets}
 & s_f=\Phi(s_I,x_I)\equiv s_I-2\oint_ \mathcal{C}\(\frac{R(v,s(v))}{x(v)}-\frac{392}{625}\frac{1}{x(v)^4}\)dv \\
 & x_f=\Psi(s_I,x_I)\equiv x_I+\oint_\mathcal{C}\frac{1}{R(v,s(v))}dv \label{first_retx}
\end{align}
and $(s_n,x_n)=(\Phi,\Psi)^{\circ n} (s_0,x_0)$ for all $n=1,2,\ldots,N_m$ for which $(s_n,x_n)$ remain in $\mathcal{R}$. We calculate asymptotically the Poincar\'e
map \eqref{first_rets},\,\eqref{first_retx} in Lemma\,\ref{ass}, information needed to determine $N_m$ in {Proposition}\,\ref{Wholeinterval}.

\subsubsection{The region $\mathcal{R}$ of ininitial consitions $(s_I,x_I)$}\label{regRR12}
As explained in \S\ref{Assumption1}, $s_0$ starts near $0$, and $s_{N_m}$ ends near $-4/3$. The estimates must be worked out differently in these two regions in $s$\footnote{Of course, \eqref{sym_T} is a symmetry of the equation, but generally not of its particular solutions.}. Therefore we define $\mathcal{R}$ as a union $\mathcal{R}=\mathcal{R}_1\cup\mathcal{R}_2$, where $\mathcal{R}_1$ contains values of $s$ close to $0$ (but not very close) and far from $-4/3$, while  $\mathcal{R}_2$ has $s$ close to $-4/3$ (but not very close) and far from $0$; both contain intermediate values of $s$. They are defined as follows.

Let $\mathfrak{m}>0$ (cf. \eqref{ASSS1} and comments preceding it) be
large enough (independent of any other parameter), $\eta_1$ given by Lemma\,\ref{eps1}, and $\eta_2$ small so that \eqref{jje} holds. Recall that here $u_0=-4$ (though other values can also be used).

{\bf{Region $\mathcal{R}_1$}} {\em{is the set of all $(s,x)$ with: \newline
(i)  $|x|>\mathfrak{m}$, $|u_0^3/3+u_0^2+s|>\eta_2/2$, $s\in  \mathbb{D}_5^+,
  |\sm|>\eta_1/2$, $|s\,x|>1$ and
 \newline
 (ii) for all $ w\in\oldCa$ (see \eqref{eq:eqDab}) we have $s-2J(w,s)/x\in\mathbb{D}_5^+$
  and $ |s-2J(w,s)/x|>|s|/8$.}}

{\bf{Region $\mathcal{R}_2$}} {\em{is defined as $\mathcal{R}_1$, only with $s$ interchanged with $\sm$  and $\oldCa$ replaced by $\oldCb$.}}

{\em{Note}} that  $(s,x)\in\mathcal{R}_1$ if and only if $(\sm,x)\in\mathcal{R}_2$.

\subsubsection{Existence of the Poincar\'e map and estimates}

\begin{Proposition}\label{prop1} For $(s_I,x_I)$ in $\mathcal{R}$, the system  \eqref{eq:eqds2I},\, \eqref{eq:eqdx2I} has a unique
  solution $s(u),x(u)$ for $u$ going once along $\mathcal{C}$. Furthermore, with $\alpha$ is defined in \eqref{alp} the solution satisfies
  \begin{equation}\label{sfcsi}
  |s(u)-s_I+{2J_I(u)}/{x_I}|\Le 7\alpha\ln |x_I|/|x_I|^2
  \end{equation}

\end{Proposition}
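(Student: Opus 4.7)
The plan is to prove Proposition \ref{prop1} via a Banach contraction argument applied to \eqref{eq:eqds2I}--\eqref{eq:eqdx2I}, viewed as a fixed-point equation on continuous pairs $(s(\cdot), x(\cdot))$ on the curve $\mathcal{C}$. Write $J_I(u) = \int_{u_0}^u R(v, s_I)\,dv$ and $L_I(u) = \int_{u_0}^u dv/R(v, s_I)$. By Corollary \ref{lem2} and the conditions defining $\mathcal{R}$, $R(v, s_I)$ is analytic on $\mathcal{C}$, so $J_I, L_I$ are well defined with $\|J_I\|_\infty \le \alpha$. As a zeroth approximation take
\[(s^{(0)}(u),\, x^{(0)}(u)) = (s_I - 2J_I(u)/x_I,\ x_I + L_I(u)),\]
and work in a closed ball $\mathcal{B}$ around it of radius $r_s = 7\alpha \ln|x_I|/|x_I|^2$ in the $s$-sup-norm and some $r_x = c_1/|x_I|$ in the $x$-sup-norm.

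The first substantive step is to show that $\mathcal{B}$ is invariant under the map $T$ defined by the right-hand sides of \eqref{eq:eqds2I}, \eqref{eq:eqdx2I}, and I expect this to be the principal difficulty. One must verify that $R(v, s(v))$ remains analytic on $\mathcal{C}$ whenever $(s, x) \in \mathcal{B}$, i.e., that $s(v) \in \mathbb{D}_5^+$ stays bounded away from the branch points $\{0, -4/3\}$ of $J$ and $L$ for \emph{every} $v \in \mathcal{C}$, not merely at the endpoints of the return loop. This is precisely the role of the defining conditions of $\mathcal{R}$: the pointwise bounds $s_I - 2J(w, s_I)/x_I \in \mathbb{D}_5^+$ and $|s_I - 2J(w, s_I)/x_I| > |s_I|/8$ for $w \in \oldCa$ (with the mirror conditions on $\oldCb$ for $\mathcal{R}_2$) provide a uniform safety margin around the approximate trajectory $s^{(0)}$, into which perturbations of size $r_s$ fit for $|x_I| > \mathfrak{m}$ large. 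The symmetry \eqref{sym_T} allows one to reduce the analysis on $\mathcal{R}_2$ to the parallel statement on $\mathcal{R}_1$.

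Granted analyticity of $R(v, s(v))$, one estimates $|T_s(s,x)(u) - s^{(0)}(u)|$ by splitting the integrand as
\[\frac{R(v, s(v))}{x(v)} - \frac{R(v, s_I)}{x_I} = \frac{R(v, s(v)) - R(v, s_I)}{x_I} + \frac{R(v, s(v))\,(x_I - x(v))}{x_I\, x(v)},\]
and using $\partial R/\partial s = 1/(2R)$ together with $|s(v) - s_I| = O(1/|x_I|)$ and $|x(v) - x_I| = O(\|L_I\|_\infty + r_x)$. After integration both terms contribute $O(\|L_I\|_\infty/|x_I|^2)$. The logarithmic factor in \eqref{sfcsi} comes precisely from $\|L_I\|_\infty$: when $|s_I| \sim 1/|x_I|$ the contour $\mathcal{C}$ passes within $O(\sqrt{|s_I|})$ of the branch points $r_{2,3}(s_I) \approx \pm i\sqrt{s_I}$ near $v = 0$, producing an $O(\ln(1/\sqrt{|s_I|})) = O(\ln|x_I|)$ contribution to $L_I$ via the sub-integral $\int dv/\sqrt{v^2 + s_I}$, cf.\ \eqref{eq:eqin1}. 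Parallel estimates yield $x$-invariance, and analogous bounds for differences $T(s_1,x_1) - T(s_2,x_2)$ show $T$ is a contraction with ratio $O(1/|x_I|) < 1/2$ once $\mathfrak{m}$ is large enough. Banach's fixed-point theorem then supplies the unique solution, and the $s$-radius of $\mathcal{B}$ is exactly \eqref{sfcsi}.
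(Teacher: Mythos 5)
Your approach is in the same spirit as the paper's: both are Banach contraction arguments centered at the leading approximation $s_I-2J_I(u)/x_I$. The paper, however, first substitutes \eqref{eq:eqdx2I} into \eqref{eq:eqds2I}, eliminating $x$ entirely and producing a \emph{scalar} fixed-point map $\mathcal N$ for $\delta(u):=s(u)-s_I+2J_I(u)/x_I$ (centered at $0$), cf.\ \eqref{sned0I}--\eqref{contr}; it derives the $x$-estimate \eqref{xned} afterward, once $s$ is known. Your version instead keeps $(s,x)$ as a pair and contracts on a product ball, which is legitimate but forces you to pick an $x$-radius correctly, and this is where your proposal has a genuine gap.

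The choice $r_x=c_1/|x_I|$ is too small. With $x^{(0)}(u)=x_I+L_I(u)$ as the $x$-center, you need $r_x$ large enough that $T$ leaves the ball invariant in the $x$-component. Writing $T_x(s,x)(u)-x^{(0)}(u)=\int_{u_0}^u\bigl(1/R(v,s(v))-1/R_I(v)\bigr)\,dv$, using $|s(v)-s_I|=O(1/|x_I|)$ and the lower bound $|R(v,\cdot)|\gtrsim(|v|^2+|s_I|)^{1/2}$ on the small-$|v|$ part $\ell$ of $\mathcal{C}$ (Lemma \ref{m0}(ii)), one gets
\begin{equation*}
|T_x(s,x)(u)-x^{(0)}(u)|\;\lesssim\;\frac{1}{|x_I|}\int_{\ell}\frac{|dv|}{(|v|^2+|s_I|)^{3/2}}\;=\;O\!\left(\frac{1}{|x_I\,s_I|}\right),
\end{equation*}
which is exactly the bound \eqref{xned}. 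The definition of $\mathcal R$ only guarantees $|s_I x_I|>1$, and indeed $|s_I|\sim 1/|x_I|$ is allowed (cf.\ \eqref{ASSS1}), so this quantity is $O(1)$ in general, not $O(1/|x_I|)$. Thus $T$ does \emph{not} map your ball into itself, and the Banach fixed-point theorem does not apply as written. The fix is easy: take $r_x=O(1/(|x_I s_I|))$ or simply $r_x=O(1)$; this does not upset your $s$-estimates because $r_x$ is still dominated by $\|L_I\|_\infty=O(\ln|x_I|)$ in the factor $|x(v)-x_I|$. But as stated the invariance step fails. Beyond that, your identification of the role of the $\mathcal R$-conditions and of the origin of the $\ln|x_I|$ factor (the near-passage of $\mathcal{C}$ to $r_{2,3}\approx\pm i\sqrt{s_I}$, giving a logarithm in $L_I$) matches the paper; note, though, that naively invoking $\partial R/\partial s=1/(2R)$ degenerates when $R$ is small, which is why the paper substitutes the more careful pointwise square-root estimates of Lemma \ref{m0} (especially \eqref{r1r2}--\eqref{l1l2}) rather than a bare Taylor bound.
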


The proof, given in \S\ref{PfProp1}, needs the estimates of \S\ref{EsSumsro} in the Appendix.
%%%%%%%%%%%%%%%%%%%%%%%%%%

%%%%%%%%%%%%%%%%%%%%%%%%%%

\subsubsection{Proof of {Proposition}\,\ref{prop1}}\label{PfProp1}
Due to the symmetry \eqref{sym_T}, the calculations with $s_I$ and $s_I^-$ are similar, see the
proof of Lemma \ref{m0} (iv), so we only
give the proof for $\mathcal{R}_1$.

\begin{proof}
 Inserting \eqref{eq:eqdx2I}  in \eqref{eq:eqds2I} and using the identity $\sqrt{a}-\sqrt{b}=(a-b)/(\sqrt{a}+\sqrt{b})$ we get
 \begin{multline}\label{sned0I}
s(u)=s_I-\frac{2J_I(u)}{x_I}-\frac{2}{x_I}\int_{u_0}^u \frac{s(v)-s_I}{R_I(v)+R(v,s(v))}dv\\+\frac{2}{x_I}\int_{u_0}^u \frac{R(v,s(v))L(v,s)}{x_I+L(v,s)}dv+\int_{u_0}^u \frac{784}{625(x_I+L(v,s))^4} dv
\end{multline}

Denoting $s(u)-s_I+2J_I(u)/x_I=\delta(u)$,  $R_I^-=R_I^-(v)=R(s_I-2J_I(v)/x_I+\delta(v)), L_I^-=L_I^-(v)=L(s_I-2J_I(v)/x_I+\delta(v))$
\eqref{sned0I} becomes
\begin{equation}\label{contr}{\text{\scalebox{0.95}{$
 \delta(u)=\int_{u_I}^u\left(\frac{2}{x_I
 ^2}  \frac{2J_I}{R_I+R_I^-}-\frac{2}{x_I}\frac{\delta}{R_I+R_I^-}
 +\frac{2}{x_I} \frac{R_I^-L_I^-}{x_I+L_I^-}
 +\frac{784}{625(x_I+L_I^-)^4} \right)=:\mathcal{N}(\delta)(u)$}}
}\end{equation}
%\begin{multline}\label{contr}
% \delta(u)=\frac{2}{x_I ^2}\int_{u_I}^u \frac{2J_I(v)dv}{R_I(v)+R_I^-(v)}-\frac{2}{x_I}\int_{u_I}^u \frac{\delta(v)dv}{R_I(v)+R_I^-(v)}\\
 %+\frac{2}{x_I}\int_{u_I}^u \frac{R_I^-(v)L_I^-(v)}{x_I+L_I^-(v)}dv +\frac{784}{625}\int_{u_I}^u\frac{dv}{(x_I+L_I^-(v))^4}=:\mathcal{N}(\delta)(u)
%\end{multline}
We show that  $\mathcal{N}$ is contractive in the ball $B=\{y:\sup\,|y(u)|\Le 7\alpha\ln |x_I|/|x_I|^2\}$ in the Banach space of continuous functions on one loop of the lifting of $\mathcal{C}$ on its universal covering, from $u_0$ to the lifting of $u_0$.

{\bf{Note.}} {\em{Once existence has been established, solutions turn out to be analytic in $u,s_I,x_I$ by standard  analytic ODEs theory.}}

We first check that the assumptions of Lemma \ref{m0} hold if
$\tilde{f}_{1}(v)=-2J_I(v)/x_I+\delta(v)$, namely that $s_I+\tilde{f}_{1}\in \newD6p$
and the inequalities in
\eqref{eq:eqf12} are satisfied if $|x_I|$ is sufficiently large. By assumption
$s_I-2J_I(v)/x_I\in\mathbb{D}_5^+$, we have $s_I-2J_I(v)/x_I+\delta(v)\in
\newD6p$ if $|\delta(v)|<\eta_2/2$. Since $ |s_I-2J_I(w)/x_I|>|s_I|/8$,
we have $|s_I+\tilde{f}_{1}(v)|>|s_I|/8-|\delta(v)|>|s_I|/10$ if
$|\delta(v)|<\frac{1}{40|x_I|}$. Finally $
|-2J_I(w)/x_I+\delta(v)|<2\alpha/|x_I|+|\delta(v)|<\eta_3/2$ if $|x_I|$ is
sufficiently large. Thus Lemma \ref{m0} applies.

 It follows that for
 large $\mathfrak{m}$ we have
$$\delta\in B\Rightarrow |\mathcal{N}(\delta)(u)|\Le \frac{6\alpha\ln
  |x_I|}{|x_I|^2}+\left|\frac{\mathfrak{c}\ln^2
    x_I}{x_I^3}\right|\,\,\forall u\in\mathcal{C}\Rightarrow \mathcal{N}(\delta)\in B$$
For $\delta_{1,2}\in B$, denoting $R^-_j(v)=R(v,s_I-2J_I(v)/x_I+\delta_j(v))$
and  $L^-_j(v)=L(v,s_I-2J_I(v)/x_I+\delta_j(v))$, $j=1,2$,  we have by Lemma \ref{m0}
\begin{equation}\label{r1r2}
|R^-_1(v)-R^-_2(v)|=\left|\frac{\delta_1-\delta_2}{R^-_1+R^-_2}\right|\Le\frac{\mathfrak{c}\sup|\delta_1-\delta_2|}{\sqrt{|v|^2+|s_I|}}
\end{equation}
and
\begin{equation}\label{r1r2n}
 \left|\frac{1}{R_I+R^-_1}-\frac{1}{R_I+R^-_2}\right|  =\left|\frac{\delta_1-\delta_2}{(R^-_1+R^-_2)(R_I+R^-_1)(R_I+R^-_2)}\right|\leqslant\frac{\mathfrak{c}\sup|\delta_1-\delta_2|}{(|v|^2+|s_I|)^{3/2}}
 \end{equation}
Thus
\begin{equation}\label{l1l2}
|L^-_1(v)-L^-_2(v)|
=\left|\int_{u_0}^v\frac{\delta_2(t)-\delta_1(t)}{R^-_1(t)R^-_2(t)
(R^-_1(t)+R^-_2(t))}dt\right|\leqslant \mathfrak{c}s_I^{-1}\sup_{\mathcal{C}}|\delta_2-\delta_1|
\end{equation}
 Using  \eqref{r1r2}, \eqref{r1r2n} and \eqref{l1l2} applied to \eqref{contr} it follows that, for
large $\mathfrak{m}$ we have
\\
\z $|\mathcal{N}(\delta_1)(u)-\mathcal{N}(\delta_2)(u)|\leqslant \mathfrak{c}x_I^{-1}\ln
x_I \sup_{\mathcal{C}}|\delta_2-\delta_1|\leqslant \frac{1}{2} $.

For initial data in $\mathcal{R}$, it follows from Proposition \ref{prop1} and Lemma \ref{m0} that
\begin{multline}\label{xned0}
x(u)=x_I+\int_{u_I}^u \frac{1}{R_I(v)}dv-\int_{u_I}^u\frac{s(v)-s_I}{R_I(v) R(v,s(v))(R(v,s(v))+R_I(v))}dv\\
\end{multline}\vspace{-0.25cm}
\z implying\vspace{-0.35cm}
\begin{equation}\label{xned}
|x(u)-x_I-{L}_I(u)|\leqslant \frac{\mathfrak{c}}{|x_Is_I|}
\end{equation}
and $|x(u)-x_I|\leqslant\mathfrak{c}(\ln |s_I|+1)$, since ${L}_I(u)\leqslant\mathfrak{c}(\ln |s_I|+1)$ by Lemma \ref{m0}. \end{proof}

\subsection{Calculating higher orders in the expansions of $x$ and $s$}
We can bootstrap Proposition \ref{prop1} in \eqref{eq:eqds2n}
and \eqref{eq:eqdx2n} to obtain, in principle, any number of terms in the
expansion of $s$ and $x$ for large $x_I$. Proposition\,\ref{fg01} and Lemma\,\ref{ass} give higher order terms in the asymptotic behavior of $s_f,x_f$ and of $\mathcal{Q}$, $\mathcal{K}$. These will be used in \S\ref{evol} to establish the number $N_m$ of times we can iterate the Poincar\'e map while preserving the asymptotic formulas.

%One quantity of particular interest is $\mathcal{Q}_I=x_IJ_I$. We have

\begin{Proposition}\label{fg01}

I. For $(s_I,x_I)\in\mathcal{R}_1$ the following estimates hold.

\z (i) The values of $s(u)$ for $u$ going once along $\mathcal{C}$ satisfy
\begin{multline}\label{sn2}
s(u)=s_I-\frac{2J_I(u)}{x_I}+\frac{2}{x_I^2}\int_{u_I}^u \frac{J_I(v)}{R_I(v)}dv+\frac{2}{x_I^2}\int_{u_I}^u R(v,s(v))\, {L}_I(v)dv+O(s_I^{-1}x_I^{-3})\\
=s_I-\frac{2J_I(u)}{x_I}+\frac{2J_I(u){L}_I(u)}{x_I^2}+O(x_I^{-3}s_I^{-1})
\end{multline}

\z(ii) The final value $x_f$ satisfies \vspace{-0.25cm}
\begin{equation}\label{xn2}
x_f=x_I+L_I-2G_I+O(x_I^{-1}s_I^{-1/2})
\end{equation} \vspace{-0.25cm}

\z(iii) $\mathcal{Q}=xJ$ is approximately constant (in the sense that $\mathcal{Q}=x_0J_0(1+o(1))$)and the first correction is
\begin{equation}\label{xje1}
\mathcal{Q}_f-\mathcal{Q}_I=x_fJ_f-x_{I}J_{I}=-2J_IG_I-2J_IF_I+O(x_I^{-1}s_I^{-1/2})
\end{equation}
whith the notation $F_I=F(s_I,x_I,J_I)$, $G_I=G(s_I,x_I)$ where
$$F(s,x,J)=\(\frac{sx}{4J}-\frac12\)
\Big(\ln(s)-\ln\left(s-{2 J}/{x}\right)\Big)-\frac12$$
$$G(s,x)=\frac12\left[\ln(s)-\ln(s-2J_{00}/x)\right]$$
where $J_{00}=J(0,0)$ cf. \eqref{eq:eqJLu},  is an elementary integral which evaluates to:
\begin{equation}
  \label{eq:eqj00}
  J_{00}=-\frac{12}{5}+\frac{2 (-2+u_0) (3+u_0)^{3/2}}{5 \sqrt{3}}=\frac{-12+4\sqrt{3}i}{5} \ \ ({\rm{since\ }} u_0=-4)
\end{equation}

\z(iv)  The functions $F_I$ and $G_I$ are
    $O(x_I^{-1}s_I^{-1})$.

II. For $(s_I,x_I)\in\mathcal{R}_2$ similar statements hold after replacing
$u,s,s_I$ by $\um,\sm,{s_I}\h,^-$ (where ${s_I}^-=s_I+\frac 43$), namely
\begin{equation}\label{xje2}
\mathcal{Q}_f-\mathcal{Q}_I=-2J_I\tilde{G}_I-2J_I\tilde{F}_I+
O\({x_I^{-1}({s_I}\h,^-})^{-1/2}\)
\end{equation}
where $\tilde{F}_I=\tilde{F}(s_I,x_I,J_I)$, $\tilde{G}_I=\tilde{G}(s_I,x_I)$
and
\begin{equation}
\tilde{F}(s,x,J)=\(\frac{i\sm x}{4J}-\frac{i}{2}\) \left[\ln (\sm)-\ln \left(\sm-2
  J/x\right)\right]-\frac{i}{2}
\end{equation}
\begin{equation}
\tilde{G}=\tilde{G}(s,x)=\frac{i}{2}  \left[\ln \sm-\ln \left(\sm -2\, J(-2,-\tfrac43)\,/{x}\right)\right]
\end{equation}
where  cf. \eqref{eq:eqJLu}
$J(-2,-\tfrac43)=-\frac{12 i}{5}$.

The functions $\tilde{F}_I$ and $\tilde{G}_I$ are $O(x_I^{-1}({s_I}\h,^-)^{-1})$.

\end{Proposition}

\begin{proof}

It suffices to prove I., then II. follows due to the symmetry \eqref{sym_T}, cf. the
proof of Lemma \ref{m0} (iv).

(i)  We have, with the notation $\epsilon_I=|v|+\sqrt{|s_I|}$,
\begin{equation}\label{rsrn0}
\frac{1}{R(v,s(v))+R_I(v)}-\frac{1}{2R_I(v)}=\frac{s_I-s(v)}{2R_I(v)(R(v,s(v))+R_I(v))^2}
=O\({\epsilon_I^{-3}}x_I^{-1}\)
\end{equation}
where we used Lemma \ref{m0} in the last equality. Thus Proposition
\ref{prop1} and \eqref{rsrn0} imply
\begin{multline}\label{rsrn}
R(v,s(v))=R_I(v)+\frac{s(v)-s_I}{R(v,s(v))+R_I(v)}\\
=R_I(v)-\frac{J_I(v)}{x_IR_I(v)}+O\(x_I^{-2}\epsilon^{-1}_I(\epsilon_I^{-2}+|\ln x_I|)\)
\end{multline}

Using \eqref{rsrn0} and \eqref{rsrn} we can rewrite \eqref{sned0I} as \eqref{sn2}.

(ii)
To improve the estimate for $x(u)$, we denote $W_I=
\sqrt{R_I^2-\frac{2J_I}{x_I}}$ and note that \eqref{sn2} implies\vspace{-0.25cm}
$$R(v,s(v))-W_I(v)=O\(\frac{\ln s_I}{\epsilon_I^2x_I^2}\) $$\vspace{-0.25cm} and thus
\begin{equation}\label{ssr}
\frac{s(v)-s_I}{R_I(v)R(v,s(v))(R(v,s(v))+R_I(v))}=\frac{-2J_I(v)}{x_IR_I(v)W_I(v)(W_I(v)+R_I(v))}+O\(\frac{\ln s_I}{\epsilon_I^2x_I^2}\)
\end{equation}
Let $\rho_I=\sqrt{v^2+s_I}$ and
$\tilde{\rho}_I=\sqrt{v^2+s_I-2J_{00}/x_I}$. To simplify the estimate in \eqref{ssr}, we use the fact that
$J(v,s_I)-J(v,0)=O(s_I\ln s_I)$ and $J(v,0)-J_{00}=O(v)$ which together with Lemma \ref{m0} implies
\begin{equation}\label{rwrw}\frac{s(v)-s_I}{R_I(v)R(v,s(v))(R(v,s(v))+R_I(v))}=
\frac{-2J_{00}}{x_I\rho_I\tilde{\rho}_I(\rho_I+\tilde{\rho}_I)}
+O\(\epsilon_I^{-2}x^{-1}_I\)
\end{equation}
The function on the right side of \eqref{rwrw} can be integrated explicitly:
\begin{equation}\label{rwrw1}
\int_{u_I}^{0}\frac{-2J_{00}(\rho_I+\tilde{\rho}_I)dv}{x_I\rho_I\tilde{\rho}_I}
=-G_I(s)+O(x_I^{-1})\ \ \ \text{where }\ \
2G_I:=\ln(s_I)-\ln(s_I-2J_{00}/x_I)
\end{equation}
We apply \eqref{rwrw} and \eqref{rwrw1} to \eqref{xned0} and obtain the recurrence relation \eqref{xn2}.

(iii) To obtain the change in $J$  we use the definition of $J_I$ as well as \eqref{sn2} to get
\begin{multline}\label{jn1}
J_f-J_I=\oint\frac{s_f-s_I}{R_I+R_f}dv=-\frac{2J_I}{x_I}\oint\frac{1}{(R_I+R_f)}dv+O(x_I^{-2}\ln s_I)\\
=-\frac{2J_I}{x_I}\(\frac{L_I}{2}-\oint\frac{s_f-s_I}{2R_I(R_I+R_f)^2}dv\)+O(x_I^{-2}\ln s_I)\\
=-\frac{2J_I}{x_I}\(\frac{L_I}{2}+\frac{J_I}{x_I}\oint\frac{1}{R_I(R_I+R_f)^2}dv\)+O(x_I^{-2}\ln s_I)
\end{multline}
Using \eqref{sn2} and Lemma \ref{m0} with $\tilde{f}_1(v)=s_f$ we get
\begin{equation}\label{rr2}
\frac{1}{R_I(v)(R_I(v)+R_f(v))^2}=\frac{1}{\rho_I(\rho_I+\sqrt{v^2+s_I-2J_I/x_I})^2}+O\(\epsilon_I^{-2}\)
\end{equation}
Since the function on the right hand side of \eqref{rr2} can be integrated explicitly, we have

\begin{equation}
\oint\frac{dv}{\rho_I(\rho_I+\sqrt{v^2+s_I-2J_I/x_I})^2}
=\frac{x_IF_I(s_I,x_I,J_I)}{J_I}+O\(s_I^{-1/2}\)
\end{equation}
Thus
\begin{equation}\label{jf}
\frac{J_I}{x_I}\oint\frac{1}{R_I(R_I+R_f)^2}dv=F_I+O\(x_I^{-1}s_I^{-1/2}\)
\end{equation}
Applying \eqref{jf} to \eqref{jn1} we get
\begin{equation}\label{jn2}
  J_f=J_I-\frac{L_IJ_I}{x_I}-\frac{2J_IF_I}{x_I}+O\(x_I^{-2}s_I^{-1/2}\)
\end{equation}
The conclusion then follows from a straightforward calculation using \eqref{xn2} and \eqref{jn2}.

(iv) When
    $|s_I|>4\max(|J_I|,|J_{00}|)/|x_I|$ the
    $\ln$ in the expressions of $F_I$ and $G_I$ can be Taylor-expanded while
    if $1/|x_I|<|s_I|\le 4\max(|J_I|,|J_{00}|)/|x_I|$, then $F_I,G_I=O(1)$ by
    straightforward estimates using the definitions of $F_I$ and $G_I$.

\end{proof}

\begin{Lemma} \label{ass}
If $(s_I,x_I)\in\mathcal{R}$
there exists a constant $\oldDa>1$ so that
\begin{equation}\label{sna}
|s_f-s_I+{2J_I}/x_I|<\oldDa|x_I^{-2}|(|\ln s_I|+|\ln {s_I}\h,^-|)
\end{equation}
\begin{equation}\label{xna}
|x_f-x_I-L_I|<\oldDa |x_I^{-1}|\,\Big(|s_I|^{-1}+|{s_I}\h,^-|^{-1}\Big)
\end{equation}
\begin{equation}\label{jna}
|J_f-J_I+L_IJ_I/x_I|<\oldDa|x_I^{-2}|\,\Big(|s_I|^{-1}+|{s_I}\h,^-|^{-1}\Big)
\end{equation}
\begin{equation}\label{xjna}
|\mathcal{Q}_f-\mathcal{Q}_I|=|x_fJ_f-x_IJ_I|<\oldDa|x_I^{-1}|\Big(|s_I|^{-1}+|{s_I}\h,^-|^{-1}\Big)
\end{equation}
\begin{equation}\label{ksn1}
\big| \mathcal{K}_{f}-\mathcal{K}_I-\epsilon_0\,\frac{\mathcal{Q}_0}{\mathcal{Q}_I}\big|<\oldDa\,|x_I|^{-2}
\left(|\ln
s_I|+|\ln {s_I}\h,^-|\right)
\end{equation}
where $\epsilon_0=-2\pi i/x_0$.
\end{Lemma}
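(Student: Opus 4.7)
\medskip

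\noindent\emph{Proof plan.}
The five inequalities \eqref{sna}--\eqref{ksn1} are assembled from the per-loop asymptotics in Proposition~\ref{fg01}, the endpoint behavior of $J$, $L$, $\mathcal{K}$ encoded in Lemmas~\ref{eps1},~\ref{bet} and Proposition~\ref{jjr}, and a Taylor expansion of $\mathcal{K}(s)=\hat J(s)/J(s)$ about $s_I$. The symmetric right-hand sides, involving both $|s_I|$ and $|{s_I}^-|$, arise from the decomposition $\mathcal{R}=\mathcal{R}_1\cup\mathcal{R}_2$: in $\mathcal{R}_1$ the factors $|{s_I}^-|^{-1}$ and $|\ln{s_I}^-|$ are harmless by definition of the region, while in $\mathcal{R}_2$ the roles are swapped via the symmetry \eqref{sym_T}, so a common majorant produces the stated form.

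For the first four bounds the work is bookkeeping out of Proposition~\ref{fg01}. Bound \eqref{sna} follows from the second line of \eqref{sn2}: the next-order term $2J_IL_I/x_I^2$ is absorbed using $|L_I|\lesssim|\ln s_I|+|\ln{s_I}^-|+1$, which comes from \eqref{ls43} and the analogous expansion $L=2L_{3,1}=\ln s+\mathrm{const}+o(1)$ at $s=0$ recorded in \eqref{eq:finL23}. Bound \eqref{xna} is \eqref{xn2} together with $G_I=O(|x_I|^{-1}|s_I|^{-1})$ from Proposition~\ref{fg01}(I)(iv) and its $\mathcal{R}_2$ counterpart. Bound \eqref{jna} follows from \eqref{jn2} upon absorbing the $2J_IF_I/x_I$ term via $F_I=O(|x_I|^{-1}|s_I|^{-1})$. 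Bound \eqref{xjna} is \eqref{xje1} (resp.\ \eqref{xje2}) with the same size bounds on $F$, $G$ (resp.\ $\tilde F$, $\tilde G$).

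The substantive step is \eqref{ksn1}. I would Taylor-expand $\mathcal{K}$ about $s_I$, using $\mathcal{K}'(s)=\kappa_0/J(s)^2$ and $\mathcal{K}''(s)=-\kappa_0L(s)/J(s)^3$ (the latter from $J'=L/2$). With $|J_I|$ bounded below by $\beta$ (Lemma~\ref{bet}) and $|L_I|\lesssim|\ln s_I|+|\ln{s_I}^-|+1$, the quadratic remainder is $O(|x_I|^{-2}(|\ln s_I|+|\ln{s_I}^-|))$. Substituting $s_f-s_I=-2J_I/x_I+O(|x_I|^{-2}(|\ln s_I|+|\ln{s_I}^-|))$ from \eqref{sna} yields
\[
\mathcal{K}_f-\mathcal{K}_I=-\frac{2\kappa_0}{x_IJ_I}+O\bigl(|x_I|^{-2}(|\ln s_I|+|\ln{s_I}^-|)\bigr).
\]
To identify the leading coefficient with $\epsilon_0\mathcal{Q}_0/\mathcal{Q}_I=-2\pi iJ_0/(x_IJ_I)$, I would compute the Wronskian $\kappa_0=J(0)\hat J'(0)=-24\pi i/5$ from \eqref{eq:FJB} and \eqref{eq:defJh}, so that $-2\kappa_0=48\pi i/5=-2\pi i\cdot(-24/5)$. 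The residual $-2\kappa_0-\epsilon_0\mathcal{Q}_0=-2\pi i(J_0+24/5)$ is controlled by Lemma~\ref{eps1}, which together with $|s_0x_0|<10$ (from \eqref{ASSS1}) gives $|J_0+24/5|=O(|s_0\ln s_0|)=O(|x_0|^{-1}\ln|x_0|)$. Dividing by $x_IJ_I$ and using $|x_0|\asymp|x_I|$ places this correction inside the stated error budget.

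The main obstacle, and the point that forces careful bookkeeping, is to track how the logarithmic singularities of $L$ and $\mathcal{K}''$ at $s=0,-4/3$ feed into both the Taylor remainder and the $L_I$ factor on the right of \eqref{sna}, and to coordinate the $\mathcal{R}_1$-estimate with its $\mathcal{R}_2$-version via \eqref{sym_T} so that the symmetric majorants $|\ln s_I|+|\ln{s_I}^-|$ and $|s_I|^{-1}+|{s_I}^-|^{-1}$ can be used uniformly over $\mathcal{R}$.
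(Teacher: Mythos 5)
Your plan matches the paper's: the first four estimates come from the per-loop asymptotics \eqref{sn2}, \eqref{xn2}, \eqref{jn2}, \eqref{xje1} together with $F_I,G_I=O(x_I^{-1}s_I^{-1})$ (the paper cites the underlying \eqref{xned}, \eqref{sn2}, \eqref{jn1} and Lemma~\ref{m0}), and \eqref{ksn1} is obtained in both by Taylor-expanding $\mathcal{K}$ about $s_I$ via $\mathcal{K}'=\kappa_0/J^2$, $\mathcal{K}''=-\kappa_0 L/J^3$, computing the Wronskian $\kappa_0=-24\pi i/5$, and controlling the mismatch between $48\pi i/(5\mathcal{Q}_I)$ and $\epsilon_0\mathcal{Q}_0/\mathcal{Q}_I$ with Lemma~\ref{eps1}. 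One minor sign slip: $-2\kappa_0-\epsilon_0\mathcal{Q}_0=48\pi i/5+2\pi i J_0=+2\pi i\bigl(J_0+\tfrac{24}{5}\bigr)$, not $-2\pi i\bigl(J_0+\tfrac{24}{5}\bigr)$, though the modulus bound you need is unaffected.
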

\begin{proof}
The first four estimates follow directly from Lemma \ref{m0} and \eqref{xned}, \eqref{sn2}, \eqref{jn1}.

To show \eqref{ksn1} we note that
\begin{equation}
  \label{eq:eqj21}
  \mathcal{K}'=\left(\frac{\hat{J}}{J}\right)'=\frac{J\hat{L}-\hat{J}L}{2J^2}=\frac{c_0}{2J^2},\ \ \ \mathcal{K}''=-\frac{c_0L}{2J^3},\ \ c_0=-48\pi i/5
\end{equation}
where we used the fact that \eqref{eq:difeq00} has no first derivative term,
and the value of $c_0$ follows from \eqref{eq:t21a}, Proposition  \ref{js0} and \eqref{eq:id3}.

By \eqref{alp} and Lemma \ref{bet}
we have
 $ \beta\Le|J(s_I)|\Le \alpha$.

Frobenius theory applied to \eqref{eq:difeq01} shows that $L(s-s_I)=O(\ln(s-s_I))$ for any singular point $s_I$ (that is, $s_I\in \{0,-4/3\}$).
Thus Taylor's theorem, \eqref{eq:eqj21}, and \eqref{sna} imply
$$\mathcal{K}(s_{f})-\mathcal{K}(s_{I})=\frac{48\pi i}{5\mathcal{Q}_I}+O\left(x_I^{-2}\left(|\ln s_I|+|\ln s_{I}^-|\right)\right)$$ which together with Lemma \ref{eps1} leads to \eqref{ksn1} (with $\mathcal{K}(s_{f})=\mathcal{K}_f$ etc.)
\end{proof}

\subsection{The solution of \eqref{eq:eqds2n},
  \eqref{eq:eqdx2n} exists for $n$ large enough so that $x_0,x_1,\ldots,x_{N_m}$ traverse the sector from edge to edge}  \label{evol}
The main results in this section are Proposition \ref{Wholeinterval}  and Corollary\,\ref{finc}, which are proved under Assumption \eqref{ASSS1}, which implies $(s_0,x_0)\in\mathcal{R}_1$. It will turn out that the iteration ends in $\mathcal{R}_2$.

%By Proposition \ref{jjr}, $s_0\in\HH$ implies $\mathcal{K}(s_0)\in\text{int}\,\,\mathcal{C}$.

Denote
\vspace{-0.25cm}\begin{equation}
  \label{eq:defep}
  \epsilon_0=-2\pi i/x_0,\ \ \epsilon_+=|\epsilon_0|
\end{equation}\vspace{-0.25cm}
We note that
\begin{equation}
  \label{eq:ep0epp}
  |\epsilon_0/\epsilon_+-1|\Le 2\epsilon_+|\ln\epsilon_+|
\end{equation}
by \eqref{ASSS1}, for large enough $\mathfrak{m}$. Let
\begin{equation}
  \label{defNs}
  N_s=\lfloor(\epsilon_+)^{-1}-(\epsilon_+)^{-1/2}\rfloor,\ \ \ \ \ \ \
  j_-=(\epsilon^+)^{-1}-j\ \ {\rm{for}}\ \ 0\Le j\Le N_s
  \end{equation}

  \begin{Proposition}\label{Wholeinterval}
Consider $s_0,x_0$ satisfying \eqref{ASSS1}. Then there exists $N_{m}>N_s$ so that the solution of the integral equations
\eqref{eq:eqds2n},\,\eqref{eq:eqdx2n} where we take $n=0$ exists along $\mathcal{C}$ for $N_{m}$
  loops and so that we have
  $$0<\Im s_{N_{m}}<11\, |x_0|^{-1}\ \ \text{and}\ \ \ \  |\Re s_{N_{m}}+\tfrac43|<2|x_0|^{-1/2}$$
\end{Proposition}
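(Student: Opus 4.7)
My plan is to run a bootstrap/induction argument on the Poincar\'e map \eqref{first_rets},\eqref{first_retx}, using the two asymptotically conserved quantities $\mathcal{Q}_n$ and $\mathcal{K}_n$ controlled by Lemma \ref{ass} to track $(s_n,x_n)$. The conceptual backbone is Proposition \ref{jjr}: since $\mathcal{K}:\HH\to\operatorname{int}(\mathcal{C}_2)$ is a conformal bijection with $\mathcal{K}(0)=0$ and $\mathcal{K}(-\tfrac43)=1$, the fact that each iteration of the map shifts $\mathcal{K}_n$ by approximately $\epsilon_0=-2\pi i/x_0$ (at almost-constant $\mathcal{Q}_0/\mathcal{Q}_n$) is equivalent to $s_n$ moving from a small neighborhood of $0$ to a small neighborhood of $-\tfrac43$ along a curve that remains in $\mathbb{D}_5^+$.

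\textbf{Steps.} First, I would set up the induction: suppose $(s_k,x_k)\in\mathcal{R}$ and $|\mathcal{Q}_k/\mathcal{Q}_0-1|=o(1)$ for $k<n$. Proposition \ref{prop1} then produces the next iterate $(s_n,x_n)$, and Lemma \ref{ass} gives the one-step updates \eqref{xjna} and \eqref{ksn1}. Telescoping \eqref{ksn1} yields
\begin{equation*}
\mathcal{K}_n=\mathcal{K}_0+\epsilon_0\sum_{k=0}^{n-1}\frac{\mathcal{Q}_0}{\mathcal{Q}_k}+E_n,\qquad |E_n|\le \oldDa\, n\, x_0^{-2}\bigl(|\ln s_k|+|\ln s_k^-|\bigr),
\end{equation*}
while \eqref{xjna} gives $\mathcal{Q}_n=\mathcal{Q}_0(1+o(1))$ as long as $n\lesssim|x_0|$ (since $|s_k|^{-1}+|s_k^-|^{-1}$ is bounded by a constant times $|x_0|$ uniformly in $k$, so the telescoped error in $\mathcal{Q}$ is $o(1)$). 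Combining, $\mathcal{K}_n=\mathcal{K}_0+n\epsilon_0+o(\epsilon_+)\cdot n$. By \eqref{ASSS1}, $x_0=-i|x_0|(1+o(1))$, so $\epsilon_0\approx 2\pi/|x_0|$ is approximately a positive real, and the sequence $\mathcal{K}_n$ drifts roughly along the segment $[0,1]\subset\operatorname{int}(\mathcal{C}_2)$.

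\textbf{Locating $N_m$.} Using Proposition \ref{jjr}(ii), points $\mathcal{K}_n$ that stay at distance $\gtrsim\epsilon_+$ from $\partial\mathcal{C}_2$ correspond to $s_n\in\mathbb{D}_5^+$ bounded away from $\RR$ proportionally; this quantifies the conditions defining $\mathcal{R}$. The expansion \eqref{eq:cK} near $\mathcal{K}=0$ gives $s_n\approx(24i/(5\pi))\mathcal{K}_n\sim n/|x_0|$, so $(s_n,x_n)\in\mathcal{R}_1$ for $n$ in a large initial range; in the intermediate range (of diameter $O(1)$ in $\mathcal{K}$-space), $|s_n|$ and $|s_n^-|$ are bounded away from $0$ and the required bounds on the shifted conditions $|s-2J(w,s)/x|>|s|/8$ etc.\ are immediate. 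Near $\mathcal{K}=1$, \eqref{eq:cK} gives $s_n^-\approx-(24i/(5\pi))(1-\mathcal{K}_n)$, placing $(s_n,x_n)\in\mathcal{R}_2$. Define $N_m$ as the first $n>N_s$ for which $0<\Im s_n<11/|x_0|$ and $|\Re s_n+4/3|<2/\sqrt{|x_0|}$; because the $\mathcal{K}$-step $|\epsilon_0|\sim 1/|x_0|$ is strictly smaller than the width of the required target window (through the expansion \eqref{eq:cK}), and because $\mathcal{K}_{N_s}$ lies at distance $\sqrt{\epsilon_+}+o(\sqrt{\epsilon_+})$ below $1$, such an $N_m$ exists and satisfies $N_m-N_s=\lfloor 1/\sqrt{\epsilon_+}\rfloor+O(1)$.

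\textbf{Main obstacle.} The delicate point is the error control: the cumulative deviation $E_n$ in $\mathcal{K}_n$ is $O(x_0^{-1}\ln^2|x_0|)$ after $O(|x_0|)$ iterations, so one must verify that this is dominated by the target window of radius $\sim 1/\sqrt{|x_0|}$ around $-\tfrac43$; this is precisely where the cushion $1/\sqrt{\epsilon_+}$ built into the definition of $N_s$ is used. A related obstacle is verifying, at each step, the non-trivial conditions (ii) in the definitions of $\mathcal{R}_1$ and $\mathcal{R}_2$ (namely $s-2J(w,s)/x\in\mathbb{D}_5^+$ and $|s-2J(w,s)/x|>|s|/8$ for $w\in\ell$ resp.\ $w\in\ell^-$); these require the quantitative form \eqref{jje} of Proposition \ref{jjr}(ii), together with Lemma \ref{eps1}, to show that the single-step motion cannot throw $s_n$ out of $\mathbb{D}_5^+$ or across the real axis.
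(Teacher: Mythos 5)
Your overall strategy — an inductive bootstrap on the Poincar\'e map tracking $\mathcal{Q}_n$ and $\mathcal{K}_n$ via Lemma~\ref{ass} and using the conformal map $\mathcal{K}$ of Proposition~\ref{jjr} to steer $s_n$ from near $0$ to near $-\tfrac43$ — is exactly the route the paper takes (\S\ref{ires}--\S\ref{PfWholeInt}). However, there is a concrete gap in your error bookkeeping for $\mathcal{Q}_n$, and a related non-quantitative step in the induction hypothesis.

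You assert that $\mathcal{Q}_n=\mathcal{Q}_0(1+o(1))$ because ``$|s_k|^{-1}+|s_k^-|^{-1}$ is bounded by a constant times $|x_0|$ uniformly in $k$, so the telescoped error in $\mathcal{Q}$ is $o(1)$.'' This does not work: the per-step bound \eqref{xjna} is $O\bigl(|x_I|^{-1}(|s_I|^{-1}+|s_I^-|^{-1})\bigr)$; with $|x_I|\sim|x_0|$ and the worst case $|s_k|\sim|x_0|^{-1}$ near the endpoints, the per-step error is $O(1)$. Telescoping over $n\sim|x_0|$ steps gives a cumulative deviation of order $|x_0|$, which is the \emph{same} size as $\mathcal{Q}_0\sim|x_0|$, not $o(\mathcal{Q}_0)$. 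To close the argument you must use the location of $s_k$: since $\mathcal{K}_k\approx k\epsilon_+$ forces $|s_k|\sim k/|x_0|$ (resp.\ $|s_k^-|\sim(N_m-k)/|x_0|$ near the other end), the per-step contribution is actually $O(1/k+1/(N_m-k))$, and the telescoped sum is a harmonic sum $O(\ln|x_0|)=o(|x_0|)$. This refinement is exactly what the paper encodes in the explicit ansatz \eqref{eq:basicineq} with the $a_j\ln[(j+1)j_-]$ term, proved to propagate via \eqref{eq:qjp1} in Proposition~\ref{indc}.

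Relatedly, an induction hypothesis of the form ``$|\mathcal{Q}_k/\mathcal{Q}_0-1|=o(1)$ for $k<n$'' cannot drive a finite induction over $O(|x_0|)$ steps: $o(1)$ is not a quantity you can propagate. The paper's fix is to posit the quantitative form \eqref{eq:basicineq} with coefficients $|a_j|,|b_j|\le \oldC0$ for a universal $\oldC0$, verify in Lemma~\ref{Consistency1} that this places $(\tilde s_j,\tilde x_j)\in\mathcal{R}$ (Proposition~\ref{iterworks}), and then show in Proposition~\ref{indc} that one more step preserves the bound with the \emph{same} $\oldC0$. Your proposal correctly identifies the two obstacles (cumulative error versus the target window of radius $\sim|x_0|^{-1/2}$, and the $\mathcal{R}$-membership conditions involving $s-2J(w,s)/x$), but it needs the explicit logarithmic-sum accounting and a concrete inductive invariant to actually resolve them. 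The second stage $N_s<n\le N_m$ (\S\ref{PfWholeInt}) also needs a separate induction, since the ansatz \eqref{eq:basicineq} was built around the singularity at $s=0$, and one must re-verify the $\mathcal{R}_2$-conditions near $s=-\tfrac43$ using the single-step estimates \eqref{sna}, \eqref{xna} and Lemma~\ref{eps1}; your proposal gestures at this but does not carry it out.
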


The proof of Proposition\,\ref{Wholeinterval} is given in \S\ref{ires} for going along $\mathcal{C}$ the first $n\leq N_s$ loops, followed by \S\ref{PfWholeInt} for a number $N_s<n<N_m$ loops.

 \subsubsection{Iteration of the Poincar\'e map a number of $n\leq N_s$ times} \label{ires}

 While $\mathcal{Q}_n$ and  $\mathcal{K}_n$ change from $n$ to $n+1$ by a term much smaller than their order, when expressed in terms of $\mathcal{Q}_0$ and  $\mathcal{K}_0$ the sum of the corrections is not small enough; in this section we show that more accurate expressions for the  discrete asymptotic conserved quantities are
  \begin{multline}
    \label{eq:basicineq}
    \tilde{\mathcal{Q}}_j=\mathcal{Q}(s_0)+a_j\ln (j+1)j_-\,\,;\ \ \tilde{\mathcal{K}}_j=\mathcal{K}(s_0)+j\epsilon_++b_j\epsilon_+^2 j\ln [(j+1)j_-],\,\,\,
  \end{multline}
 where $a_j,b_j$ may depend of $s_0,x_0$, but are bounded by a constant $ \oldC0$ independent of $s_0,x_0$:
 \begin{equation}\label{estajbj}
 |a_j|,\, |b_j|\leqslant \oldC0\ \ \text{for all }j=0,1,\ldots,N_s\ \text{ for some }\ \oldC0\ \ \text{large enough}
 \end{equation}
 proved in Proposition\,\ref{indc}, with the help of Lemma\, \ref{Consistency1}, by complete induction on $n$.

Then
\begin{equation}\label{deftilde}
  \tilde{s}_j={\mathcal{K}}^{-1}(\tilde{\mathcal{K}}_j),\ \ \  \tilde{x}_j=\tilde{\mathcal{Q}}_j/J(\tilde{s}_j)
\end{equation}
are expected to be the leading order of $s_j, x_j$, as expected form \eqref{eq:eq44d1},\,\eqref{eq:q3}.

\begin{Note}\label{sym1} {\rm As before denote $\sm=s+4/3,
    \stjm= \tilde{s}_j+4/3$ and so on. We only prove the results in
    this section for $\tilde{s}_j$, and the proofs for $\tilde{s}_j+\frac43 $
    are similar due to the symmetry \eqref{sym_T}, see the proof of Lemma
    \ref{lem1} (iii).

    We write $o(1)$ for quantities that vanish as $x_0\to \infty$ (therefore as
    $\epsilon_0\to 0$).}
 \end{Note}

\begin{Lemma}\label{Consistency1}
    Let $n\Le N_s$. Assume
  \eqref{eq:basicineq},\,\eqref{estajbj} hold for all $j=0,1,\ldots,n$
  and $\oldC0$ is large enough. Then there exist two
  constants $\oc1,\oldc2>0$, independent of $\oldC0, s_0,x_0$, such that for
  all $1\Le j\Le n$ and large  $x_0$ we have
\begin{enumerate}[(i)]
 \item \ \  $\tilde{s}_j\in\HH$ and  $|\tilde{s}_j|<5$
\item  \ \
$\oc1 j\epsilon_+\Le|\tilde{s}_j|\Le \oldc2 j\epsilon_+,\ \ \ \ \ \oc1(1-j\epsilon_+)|\Le\, |\stjm |\Le \oldc2
(1-j\epsilon_+)$
\item  \ \ $\frac12\,\frac{\alpha}{\beta} \Le |\tilde{x}_j/x_0|\Le 2\,\frac{\alpha}{\beta}$ where $\alpha,\beta$ are given by Lemma\,\ref{bet} and \eqref{alp}.
  \end{enumerate}
\end{Lemma}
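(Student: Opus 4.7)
The plan is to leverage the conformal map $\mathcal{K}:\HH\to\mathrm{int}(\mathcal{C}_2)$ from Proposition \ref{jjr}, the local expansions \eqref{eq:cK} near $s=0,-\tfrac43$, and the uniform bounds $\beta\leq|J(s)|\leq\alpha$ on $\DD_5^+$ from Lemma \ref{bet} and \eqref{alp}. By the inductive hypothesis \eqref{estajbj} and the ranges $j\epsilon_+\leq 1$, $\ln[(j+1)j_-]\leq 2|\ln\epsilon_+|$, the error term in $\tilde{\mathcal{K}}_j=\mathcal{K}(s_0)+j\epsilon_++b_j\epsilon_+^2j\ln[(j+1)j_-]$ is $O(\epsilon_+|\ln\epsilon_+|)$, while $\mathcal{K}(s_0)=O(\epsilon_+)$ by \eqref{eq:cK} and $|s_0x_0|<10$.

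For (i), I would first verify $\tilde{\mathcal{K}}_j\in\mathrm{int}(\mathcal{C}_2)$, so that $\tilde s_j=\mathcal{K}^{-1}(\tilde{\mathcal{K}}_j)\in\HH$ is defined. Writing $\tilde{\mathcal{K}}_j=j\epsilon_++O(\epsilon_+|\ln\epsilon_+|)$, for $1\leq j\leq N_s$ one has $\Re\tilde{\mathcal{K}}_j\geq\tfrac12 j\epsilon_+\geq\tfrac12\epsilon_+$ and $1-\Re\tilde{\mathcal{K}}_j\geq\tfrac12\sqrt{\epsilon_+}$, while $|\Im\tilde{\mathcal{K}}_j|=O(\epsilon_+|\ln\epsilon_+|)$. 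Near the tangency points $\{0,1\}=\mathcal{C}_2\cap\RR$, $\mathcal{C}_2$ is an arccircle tangent to the vertical line, so $\mathrm{int}(\mathcal{C}_2)$ contains a strip $\{x+iy:0<x<1,\ |y|<\kappa\min(\sqrt x,\sqrt{1-x})\}$ for some absolute $\kappa>0$; our $|\Im\tilde{\mathcal{K}}_j|$ is much smaller than $\sqrt{\epsilon_+}$, placing $\tilde{\mathcal{K}}_j$ strictly inside $\mathcal{C}_2$. (The case $j=0$ is trivial since $\tilde{\mathcal{K}}_0=\mathcal{K}(s_0)\in\mathcal{K}(\HH)$.) The bound $|\tilde s_j|<5$ then follows from the contrapositive of Proposition \ref{jjr}(ii): if $|\tilde s_j|\geq 5$ then $\Im\mathcal{K}(\tilde s_j)\leq-\tfrac25$, incompatible with $\Im\tilde{\mathcal{K}}_j=O(\epsilon_+|\ln\epsilon_+|)$ for $|x_0|$ large.

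For (ii), in the regime $|\tilde s_j|\ll 1$ I would invert the first relation in \eqref{eq:cK} to get $\tilde s_j=\tfrac{24i}{5\pi}\tilde{\mathcal{K}}_j(1+o(1))$; combining with $\tilde{\mathcal{K}}_j=j\epsilon_+(1+o(1))$ and $\epsilon_+=2\pi/|x_0|$ yields $|\tilde s_j|\asymp j\epsilon_+$ with explicit constants $\oc1,\oldc2$. The symmetric bound $|\stjm|\asymp 1-j\epsilon_+$ follows from the second relation of \eqref{eq:cK} applied to $1-\tilde{\mathcal{K}}_j$, using the symmetry \eqref{sym_T}. In the intermediate regime both $|\tilde s_j|,|\stjm|\asymp 1$ by uniform continuity of $\mathcal{K}^{-1}$ on compact subsets of $\mathrm{int}(\mathcal{C}_2)$ separated from $\{0,1\}$. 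For (iii), \eqref{eq:basicineq} together with \eqref{estajbj} gives $\tilde{\mathcal{Q}}_j=x_0J_0+O(\ln|x_0|)$, hence $\tilde x_j/x_0=J_0/J(\tilde s_j)\cdot(1+o(1))$; since Lemma \ref{bet} gives $|J(\tilde s_j)|\geq\beta$ on all of $\DD_5^+$ (the $|s+\tfrac43|>\epsilon$ restriction there applies to $J_{{\tso}}$, not to $J$) and \eqref{alp} gives $|J|\leq\alpha$, the claimed two-sided bound on $|\tilde x_j/x_0|$ follows for $|x_0|$ large enough.

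The main obstacle is the geometric step in (i): although $|E_j|$ exceeds $\epsilon_+$ only by a logarithmic factor, the tangency of $\mathcal{C}_2$ at the two endpoints $0,1$ means the available vertical tolerance shrinks like $\sqrt{\mathrm{dist}}$, so one must check with some care that $|E_j|+|\Im\mathcal{K}(s_0)|$ fits inside this narrow corridor uniformly over $1\leq j\leq N_s$, especially at $j$ near the boundary values $1$ and $N_s$ where the tolerance is smallest.
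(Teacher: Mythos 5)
Your proof is correct and follows essentially the same path as the paper's: it inverts $\mathcal{K}$ on $\text{int}(\mathcal{C}_2)$ using Proposition \ref{jjr} and its local expansions \eqref{eq:cK}, and deduces (iii) from the two-sided $|J|$-bounds of Lemma \ref{bet} and \eqref{alp}. The one step you flesh out beyond the paper's terse treatment -- the quantitative check that $\tilde{\mathcal{K}}_j$ remains inside $\text{int}(\mathcal{C}_2)$ despite the cuspidal tangencies of $\mathcal{C}_2$ at $0$ and $1$, comparing the accumulated error $O(\epsilon_+|\ln\epsilon_+|)$ against the $\sqrt{\cdot}$-shaped tolerance which is $\gtrsim\sqrt{\epsilon_+}$ even at $j=1$ and $j=N_s$ -- is a detail the paper leaves implicit, and your observation that the $|s+\tfrac43|>\epsilon$ restriction in Lemma \ref{bet} applies to $\beta_1$ (for $J_{2,1}$) but not to $\beta$ (for $J$) is correct and pertinent since $\tilde{s}_j\to-\tfrac43$ as $j\to N_s$.
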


\begin{proof}
 \eqref{eq:basicineq} implies
  $|\mathcal{K}(s_j)-j\epsilon_+|=O(j\epsilon_+^2\ln\epsilon^{-1}_+)$ for large $x_0$ and  all
  $j\Le N_s$. Thus $\mathcal{K}_j$ traverses $[0,1]$ up to small
  corrections.
  \begin{enumerate}[(i)]

  \item \label{it22} By the above, $\Im
    \mathcal{K}(s_j)=O(j\epsilon_+^2\ln\epsilon^{-1}_+)$. Now Proposition
    \ref{jjr} (ii) implies $\tilde{s}_j\in\HH$ and $|\tilde{s}_j|<5$. Lemma
    \ref{bet} and \eqref{alp} now give
\begin{equation}\label{new43}
\beta\Le|J(\tilde{s}_j)|\Le \alpha
\end{equation}
\item \label{it23} For small $w$,  Proposition \ref{jjr} (iii) implies
\begin{equation}
  \label{eq:cKinv}
  \left|\mathcal{K}^{-1}(t)-\tfrac{24i}{5\pi}t\right|\Le |t^{3/2}|;\ \  \left|\mathcal{K}^{-1}(1-t)+\tfrac{4}{3}-\tfrac{24i}{5\pi}t\right|\Le |t^{3/2}|
\end{equation}
and thus $\tilde{s}_j/(j\epsilon_+)$ is bounded above and below when
$j\epsilon_+,\,\,(j>0)$ is small. The rest is immediate.

\item \label{it24}  This follows by straightforward estimates from
  \eqref{eq:basicineq} and \eqref{new43}.
\end{enumerate}
\end{proof}

\begin{Proposition}\label{iterworks} Let $n$ be such that the assumptions of Lemma\,\ref{Consistency1} hold. If $s_0,x_0$ satisfy \eqref{ASSS1} with $\mathfrak{m}$ large enough, then $(\tilde{s}_j,\tilde{x}_j)$ (defined in \eqref{deftilde}) belong to
$\mathcal{R}$ (defined in \S\ref{regRR12}) for all $j=0,1,\ldots,n$.
\end{Proposition}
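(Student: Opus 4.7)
The plan is to verify, for every $j \in \{0, 1, \ldots, n\}$, each defining inequality of either $\mathcal{R}_1$ or $\mathcal{R}_2$, using the three estimates supplied by Lemma~\ref{Consistency1} together with the sharper formula for $\tilde{s}_j$ coming from Proposition~\ref{jjr} and \eqref{eq:cKinv}. Concretely, I would fix a small constant $c > 0$ (independent of $x_0$) and split the range into two overlapping windows: in window~A, where $j\epsilon_+ \le 1-c$, Lemma~\ref{Consistency1}(ii) gives $|\stjm| \ge \oc1 c$, so the target is $\mathcal{R}_1$; in window~B, where $j\epsilon_+ \ge c$, one has $|\tilde{s}_j| \ge \oc1 c$ and the target is $\mathcal{R}_2$. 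By the symmetry $s \mapsto -\sm$ of \eqref{sym_T}, which interchanges $\oldCa$ and $\oldCb$ and swaps the two regions in the definition of $\mathcal{R}$, it suffices to treat window~A.

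Most of the conditions defining $\mathcal{R}_1$ are immediate. The bound $|\tilde{x}_j| > \mathfrak{m}$ follows from Lemma~\ref{Consistency1}(iii) and $|x_0| > \mathfrak{m}$. Since $u_0 = -4$ gives $u_0^3/3 + u_0^2 = -16/3$ and $|\tilde{s}_j| < 5$, one has $|u_0^3/3 + u_0^2 + \tilde{s}_j| \ge 16/3 - 5 = 1/3$, comfortably above $\eta_2/2$ as soon as $\eta_2 < 2/3$. Membership $\tilde{s}_j \in \mathbb{D}_5^+$ is Lemma~\ref{Consistency1}(i), and in window~A the inequality $|\stjm| > \eta_1/2$ follows from the choice $c \ge \eta_1/(2\oc1)$. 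For $|\tilde{s}_j \tilde{x}_j| > 1$, when $j \ge 1$ one combines Lemma~\ref{Consistency1}(ii,iii) with $\epsilon_+ = 2\pi/|x_0|$ to obtain $|\tilde{s}_j \tilde{x}_j| \ge \oc1 j\epsilon_+ \cdot \tfrac{\alpha}{2\beta}|x_0| \ge \pi\oc1\alpha/\beta > 1$; for $j = 0$ this is built into \eqref{ASSS1}.

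The delicate condition is the last one: for every $w \in \oldCa$, $\tilde{s}_j - 2J(w,\tilde{s}_j)/\tilde{x}_j \in \mathbb{D}_5^+$ and its modulus exceeds $|\tilde{s}_j|/8$. The perturbation satisfies $|2J(w,\tilde{s}_j)/\tilde{x}_j| \le 2\alpha/|\tilde{x}_j| \le 4\beta/|x_0| = 2\beta\epsilon_+/\pi$ by \eqref{alp} and Lemma~\ref{Consistency1}(iii). Setting $j_* := \lceil 16\beta/(7\pi\oc1)\rceil + 1$, for $j \ge j_*$ the bound $|\tilde{s}_j| \ge \oc1 j\epsilon_+$ from Lemma~\ref{Consistency1}(ii) forces the perturbation to be at most $\tfrac{7}{8}|\tilde{s}_j|$, so both assertions follow from the triangle inequality (with $\mathfrak{m}$ large enough to keep the shifted point inside $\mathbb{D}_5^+$). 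For the finitely many small indices $j \in \{0, 1, \ldots, j_*\}$, a geometric argument is needed: using \eqref{eq:cKinv} together with \eqref{eq:basicineq}, one obtains $\tilde{s}_j = s_0 + \tfrac{24 i j\epsilon_+}{5\pi} + O\bigl((j\epsilon_+)^{3/2} + j\epsilon_+^2|\ln\epsilon_+|\bigr)$, so $\Im \tilde{s}_j \ge \Im s_0 + \tfrac{24 j\epsilon_+}{5\pi}(1+o(1))$, while the perturbation has modulus $O(\epsilon_+)$; together with $\Im s_0 > 0$ and $1 < |s_0 x_0| < 10$ from \eqref{ASSS1}, this pins the direction of $\tilde{s}_j$ away from the curve $\{2J(w,\tilde{s}_j)/\tilde{x}_j : w \in \oldCa\}$ and yields both the required $\HH$-membership and the lower bound $|\tilde{s}_j - 2J(w,\tilde{s}_j)/\tilde{x}_j| > |\tilde{s}_j|/8$.

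The main obstacle I anticipate is precisely this small-$j$ regime, because $|\tilde{s}_j|$ and the perturbation $|2J(w,\tilde{s}_j)/\tilde{x}_j|$ are both of the same order $\epsilon_+$, so crude modulus estimates are insufficient; one must extract directional information from the leading-order expansion of $\mathcal{K}^{-1}$ and from the constraint $1 < |s_0 x_0| < 10$ to separate $\tilde{s}_j$ from the image of $2J(\cdot,\tilde{s}_j)/\tilde{x}_j$ uniformly in $w \in \oldCa$. Once this window is disposed of, the verification in window~B for $\mathcal{R}_2$ reduces via \eqref{sym_T} to the same argument applied to $\stjm$, and one concludes $(\tilde{s}_j, \tilde{x}_j) \in \mathcal{R}$ for all $0 \le j \le n$.
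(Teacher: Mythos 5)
Your overall strategy — verify the $\mathcal{R}_1$ conditions on one side, invoke the symmetry \eqref{sym_T} for the other, and observe that the only delicate condition is the requirement on $\tilde{s}_j - 2J(w,\tilde{s}_j)/\tilde{x}_j$ for small $\tilde{s}_j$ — matches the paper's. You also correctly identify the heart of the difficulty: for small $j$ both $|\tilde{s}_j|$ and the perturbation $2J(w,\tilde{s}_j)/\tilde{x}_j$ are of order $\epsilon_+$, so modulus estimates alone fail and directional information is essential.

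However, there is a genuine gap: you announce that directional information is needed but do not produce it, and your fallback argument for $j \ge j_*$ does not close the hole. The triangle inequality gives you $|\tilde{s}_j - 2J(w,\tilde{s}_j)/\tilde{x}_j| \ge \tfrac18|\tilde{s}_j|$ when the perturbation is at most $\tfrac78|\tilde{s}_j|$, but it says nothing about $\Im(\tilde{s}_j - 2J(w,\tilde{s}_j)/\tilde{x}_j) > 0$, which is part of membership in $\mathbb{D}_5^+$. Since $\Im\tilde{s}_j$ can itself be as small as $O(\epsilon_+)$, a perturbation comparable in size could in principle push the point below $\RR$, and your argument does not rule this out. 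What the paper does — and what your sketch is missing — is an explicit computation: writing $J_T = -\tfrac{12}{5} + \tfrac{u^2}{2} + \tfrac{4i\sqrt 3}{5}$ as the two-term Taylor expansion of $J(u,0)$ and bounding the remainder by $1/10$ on $\ell$, one obtains $-16/5 < \Re J(u,0) < -2$ and $2/3 < \Im J(u,0) < 11/5$ for $u \in \ell$. Combined with $\arg\tilde{x}_j \approx -\pi/2$ (from $\tilde{x}_j = x_0(1+o(1))$ and \eqref{ASSS1}), this forces $-2J(w,\tilde{s}_j)/\tilde{x}_j$ into (roughly) the first quadrant, so that both $\Re$ and $\Im$ of $\tilde{s}_j$ are strictly increased by the perturbation. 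That single observation simultaneously delivers $\HH$-membership and the lower bound $> |\tilde{s}_j|/8$, uniformly in $j$, with no need to split at a threshold $j_*$. Without it, the small-$j$ and the $\HH$-membership parts of your argument remain unjustified.

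A minor secondary issue: for $|\tilde{s}_j\tilde{x}_j| > 1$ you conclude $\ge \pi\oc1\alpha/\beta > 1$ for $j \ge 1$, but the constant $\oc1$ from Lemma~\ref{Consistency1}(ii) is not asserted to be bounded below by $\beta/(\pi\alpha)$. The paper instead reduces this condition to the regime $\tilde{s}_j = o(1)$, where it follows from $J(\tilde{s}_j) = J(0)(1+o(1))$, $\tilde{x}_j = x_0(1+o(1))$, \eqref{eq:basicineq} and \eqref{eq:cKinv} together with $|s_0 x_0| > 1$ from \eqref{ASSS1}; tracking the leading-order form of $\mathcal{K}^{-1}$ is safer than relying on an unquantified constant.
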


\begin{proof}
By Note \ref{sym1}, it suffices to look at those $j$ for which
$|\stjm|>\eta_1$.
Lemma \ref{Consistency1} implies  $|\tilde{x}_j|\Ge\frac{\alpha}{2\beta}|x_0|$ and
$\tilde{s}_j\in\mathbb{D}_5^+$.
 With $u_0=-4$ we have $|u_0^3/3+u_0^2+\tilde{s}_j|>\tfrac{16}{3}-5$  by  Lemma \ref{Consistency1} (i).

 The property $|\tilde{s}_j \tilde{x}_j|>1$  only  needs to be checked when $\tilde{s}_j=o(1)$, by Lemma
 \ref{Consistency1} (ii) and (iii).   That is, by
 Proposition \ref{jjr}, we look at those $j$ for which  $j\epsilon_+=o(1)$. In this case, by
 \eqref{eq:basicineq} we have
 \begin{equation}
   \label{eq:xJJ}
   J(\tilde{s})=J(0)(1+o(1))\Rightarrow \tilde{x}_j=x_0(1+o(1))
 \end{equation}
 and the rest
 follows from the definition of $\tilde{s}_j$, \eqref{eq:basicineq} and
 \eqref{eq:cKinv}.

Using the definition of $J$ we have $J(u,s)-J(u,0)\to 0$ as $s\to 0$, and $J(u,0)$ is
given by an
elementary  integral.
 Let $J_T=-\frac{12}{5}+\frac{u^2}{2}+\frac{4 i \sqrt{3}}{5}$ be the  two term-Taylor
expansion of $J(u,0)$; the Taylor remainder $|J(u,0)-J_T(u)|$ is bounded by
$1/10$ for $u\in\ell$. Using this bound and calculating  $\Re
J_T,\Im J_T$  for $u\in \ell$
we get
\begin{equation}
  \label{eq:inq1}
  -16/5<\Re J(u,0)<-2\ \ \ \text{and} \ \ \ \ 2/3<\Im J(u,0)<11/5
\end{equation}
For small $s$ \eqref{eq:inq1} and \eqref{eq:xJJ} imply
$$\Re \tilde{s}_j+\tfrac1{|\tilde{x}_j|}<\Re \(\tilde{s}_j-\tfrac{2J(u,\tilde{s}_j)}{\tilde{x}_j}\)<\Re
\tilde{s}_j+\tfrac5{|\tilde{x}_j|};\ \ \ \Im \left( \tilde{s}_j-2\tfrac{J_I(u)}{\tilde{x}_j}\right)>\Im \tilde{s}_j+\tfrac3{|\tilde{x}_j|}$$
Since $\tilde{s}_j\in\HH$, using these inequalities, we see that $\tilde{s}_j-2J_I(u)/\tilde{x}_j\in \mathbb{D}_5^+$ and
$$|\tilde{s}_j-2J_I(u)/\tilde{x}_j| \Ge \max\left\{\Re \tilde{s}_j-5/|\tilde{x}_j|,\ |\Im \tilde{s}_j|+3/|\tilde{x}_j|\right\}>|\tilde{s}_j|/8$$
for $u\in\ell$. For $n$ close to $N_s$  see Note \ref{sym1}.
\end{proof}

\begin{Proposition}[The evolution ``preserves'' \eqref{eq:basicineq}] \label{indc} \

 Let $n \Le N_s$.
Assume \eqref{eq:basicineq},\,\eqref{estajbj} are true for $j=1,...,n-1$.

Consider the initial conditions $(s_I,x_I)=(\tilde{s}_{n-1},\tilde{x}_{n-1})$. By Propositions
\ref{iterworks} and \ref{prop1},  the solution exists for one more loop and $(s_f,x_f)$ are well defined, if $\mathfrak{m}$ is large enough.

Then with  $\tilde{\mathcal{Q}}_n=x_fJ_f$ and
$\tilde{\mathcal{K}}_n=\mathcal{K}(s_f)$, $\tilde{\mathcal{Q}}_j,\tilde{\mathcal{K}}_j$  satisfy
\eqref{eq:basicineq}, \eqref{estajbj} for  $j=1,...,n$ for some $c_2>0$.
\end{Proposition}
\begin{proof}[Proof of Proposition \ref{indc}]
 Using Lemma \ref{Consistency1} to estimate $1/x_f$, $1/s_f$, we see that for large $x_0$
there is a constant  $\ogc1$ (independent of $n$, $\oldC0,s_0,x_0$) so that
  \begin{equation}
    \label{eq:firstineq}
    |x_f^{-1}|(|s_f|^{-1}+|s_f^-|^{-1})\Le \ogc1 (1/
    n+1/N_s)
  \end{equation}
Now, \eqref{xjna} and \eqref{eq:firstineq} imply
\begin{equation}
  \label{eq:qjp1}
 | \tilde{\mathcal{Q}}_{n}-\tilde{\mathcal{Q}}_{n-1}|\leqslant \oldDa\ogc1\left(\frac{1}{n-1}+\frac{1}{({n-1})_-}\right)
\end{equation}
implying that \eqref{eq:basicineq},\,\eqref{estajbj}
hold for $\tilde{\mathcal{Q}}_j$ for all $j=1,...,n$ if $\oldC0>2\oldDa\ogc1$.  This fact, and \eqref{eq:ep0epp}, and Lemma \ref {Consistency1},
used to estimate $\tilde{x}_j$ and $\tilde{s}_j$ in \eqref{ksn1}, show that
\begin{multline}\label{136n}
|\mathcal{K}(\tilde{s}_{n})-\mathcal{K}(\tilde{s}_{n-1})-\epsilon_+|<\tfrac15\oldC0\epsilon_+^2\ln [(j+1)j_-] +2\epsilon_+^2|\ln \epsilon_+|\\
+\frac{4\oldDa \beta^2}{\alpha^2|x_0|^{2}}(|\ln[(j+1)j_-]+2|\ln (\oc1\epsilon_+)|)
\end{multline}
Adding the errors in \eqref{136n}, and using the fact that $\epsilon_+^{-1}<2(j+1)j_-$ it follows that
$\tilde{\mathcal{K}}_j$ satisfy \eqref{eq:basicineq}, \eqref{estajbj} for all $j=1,...,n$.
\end{proof}

We can now obtain estimates for $\mathcal{Q}_n$ and $\mathcal{K}_n$:

\begin{Corollary}[Inductive construction of the solution of \eqref{eq:eqds2n},
  \eqref{eq:eqdx2n}]\label{finc} The solution of the integral equations \eqref{eq:eqds2n},
  \eqref{eq:eqdx2n} exists along $\mathcal{C}$ for $N_s$
  loops. Furthermore,
\begin{equation}\label{sns}
|s_{N_s}\,^--4i|x_0|^{-1/2}|<|x_0|^{-1/2};\;  |\frac{x_{N_s}}{x_0}+i|<(\oldC0+1)\ln |x_0|/|x_0|
\end{equation}
In particular $(s_{N_s},x_{N_s})\in\mathcal{R}_2$.
  \end{Corollary}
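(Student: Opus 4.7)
The plan is a finite induction on $n$ from $0$ to $N_s$, combining Propositions \ref{iterworks}, \ref{prop1} and \ref{indc}. The induction hypothesis is that \eqref{eq:basicineq} and \eqref{estajbj} hold for $j=0,1,\ldots,n-1$ with the same constant $\oldC0$, and that the Poincar\'e iterates $(s_j,x_j)=(\tilde s_j,\tilde x_j)$ are defined. For the base case $n=0$ one takes $a_0=b_0=0$, so \eqref{eq:basicineq} is tautological and \eqref{ASSS1} places $(s_0,x_0)\in\mathcal{R}_1\subset\mathcal{R}$. In the step from $n-1$ to $n$, Proposition \ref{iterworks} ensures $(\tilde s_{n-1},\tilde x_{n-1})\in\mathcal{R}$, Proposition \ref{prop1} then yields the existence of one more loop of the Poincar\'e map with output $(s_n,x_n)=(\tilde s_n,\tilde x_n)$, and Proposition \ref{indc} extends \eqref{eq:basicineq}, \eqref{estajbj} to $j=n$ without enlarging $\oldC0$. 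Iterating to $n=N_s$ gives existence for $N_s$ loops along $\mathcal{C}$ together with explicit formulas for $\tilde{\mathcal{K}}_{N_s}$ and $\tilde{\mathcal{Q}}_{N_s}$.

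From those formulas I would then read off \eqref{sns}. Since $N_s=\lfloor\epsilon_+^{-1}-\epsilon_+^{-1/2}\rfloor$, one has $N_s\epsilon_+=1-\epsilon_+^{1/2}+O(\epsilon_+)$; combined with $\mathcal{K}(s_0)=O(\epsilon_+)$ from $|s_0|=O(\epsilon_+)$ and \eqref{eq:cK}, the $n=N_s$ instance of \eqref{eq:basicineq} gives $1-\tilde{\mathcal{K}}_{N_s}=\epsilon_+^{1/2}+O(\epsilon_+\ln\epsilon_+^{-1})$. Inverting $\mathcal{K}$ near $1$ via \eqref{eq:cKinv}, then substituting $\epsilon_+=2\pi/|x_0|$, produces $s_{N_s}^-=(24i\sqrt{2/\pi}/5)\,|x_0|^{-1/2}+O(|x_0|^{-3/4})$; since $|24\sqrt{2/\pi}/5-4|<1$, this matches $4i|x_0|^{-1/2}$ within the slack $|x_0|^{-1/2}$, giving the first part of \eqref{sns}. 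For the second part, write $x_{N_s}=\tilde{\mathcal{Q}}_{N_s}/J(s_{N_s})$ with $\tilde{\mathcal{Q}}_{N_s}=x_0J_0+O(\ln|x_0|)$, $J_0=-24/5+O(|x_0|^{-1})$, and use the expansion of $J$ near $-4/3$ from Lemma \ref{eps1} to expand $J(s_{N_s})$ around $-24i/5$; the ratio $J_0/J(s_{N_s})$ is then $-i$ plus controlled logarithmic corrections, and combining with the $O(\ln|x_0|/|x_0|)$ contribution from $\tilde{\mathcal{Q}}_{N_s}/x_0-J_0$ yields the stated bound on $|x_{N_s}/x_0+i|$.

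The containment $(s_{N_s},x_{N_s})\in\mathcal{R}_2$ is a direct verification from the defining inequalities of $\mathcal{R}_2$: $|x_{N_s}|\sim|x_0|>\mathfrak{m}$; $u_0^3/3+u_0^2+s_{N_s}^-\approx-16/3$ stays bounded below; $s_{N_s}^-\in\mathbb{D}_5^+$ and $|s_{N_s}|\approx 4/3>\eta_1/2$ follow from $s_{N_s}^-\approx 4i|x_0|^{-1/2}$; $|s_{N_s}^-x_{N_s}|\approx 4|x_0|^{1/2}>1$; and for $w\in\oldCb$ the shift $2J(w,s_{N_s})/x_{N_s}=O(|x_0|^{-1})$ is much smaller than $|s_{N_s}^-|\sim|x_0|^{-1/2}$, so both $s_{N_s}^--2J(w,s_{N_s})/x_{N_s}\in\mathbb{D}_5^+$ and the lower bound $|\,\cdot\,|>|s_{N_s}^-|/8$ persist. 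The main obstacle is the bookkeeping in the middle paragraph: the nominal coefficients $4i$ in the $s$-bound and $-i$ in the $x$-bound are only the leading-order values, so one must track the sub-leading logarithmic contributions from the expansions of $\mathcal{K}^{-1}$ near $1$ and of $J$ near $-4/3$ carefully enough to confirm that they are absorbed by the slack allowed in \eqref{sns}.
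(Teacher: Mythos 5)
Your proposal follows the same route as the paper: a finite induction that cycles through Proposition~\ref{iterworks}, Proposition~\ref{prop1}, and Proposition~\ref{indc} to keep \eqref{eq:basicineq}, \eqref{estajbj} alive up to $j=N_s$, then reads off the estimate on $s_{N_s}$ from \eqref{eq:cKinv} and the estimate on $x_{N_s}$ from Lemma~\ref{eps1} together with \eqref{eq:basicineq}. Your version is simply more explicit in the bookkeeping, which the paper compresses into two sentences.

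One point worth recording: the concern you raise in your last paragraph about the logarithmic sub-leading corrections for $x_{N_s}$ is well founded, and it points to what appears to be a misprint in \eqref{sns} rather than a flaw in the argument. From \eqref{eq:basicineq} one has $\mathcal{Q}_{N_s}=x_0J_0+O(\ln|x_0|)$, and from Lemma~\ref{eps1} with $s_{N_s}^-\sim 4i|x_0|^{-1/2}$ one has $J_{N_s}=-\tfrac{24i}{5}+O(|x_0|^{-1/2}\ln|x_0|)$; dividing gives $x_{N_s}/x_0=-i+O(|x_0|^{-1/2}\ln|x_0|)$, not $-i+O(|x_0|^{-1}\ln|x_0|)$. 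This matches how the bound is actually used in the next subsection and in the later Corollary, both of which carry the exponent $-1/2$. So the second inequality in \eqref{sns} should read $|x_{N_s}/x_0+i|<(c_2+1)\ln|x_0|/|x_0|^{1/2}$, and with that correction your derivation goes through unchanged; the first inequality and the $\mathcal{R}_2$ membership are as you verified.
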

  \begin{proof}
    With $s_I=s_0, x_I=x_0$, we get by Proposition \ref{prop1}
    $x_1=x_f,s_1=s_f$, and \eqref{eq:basicineq} follow from \eqref{xjna} and
    \eqref{ksn1}. Thus, by Proposition \ref{iterworks}, Proposition
    \ref{prop1} applies, to yield $\mathcal{Q}_2$ and $\mathcal{K}_2$ which by
    Proposition \ref{indc} satisfy \eqref{eq:basicineq} and, inductively
$x_j,s_j$ yield $\mathcal{Q}_j,\mathcal{K}_j$ %not tilde
for all $j\Le N_s$.

The estimate for $s_{N_s}$ follows from \eqref{eq:cKinv}, and the estimate for $x_{N_s}$ follows from Lemma \ref{eps1} and \eqref{eq:basicineq}.
  \end{proof}

\subsubsection{Proof of Proposition\,\ref{Wholeinterval} for $n>N_s$ up to $n=N_m$}\label{PfWholeInt}

We prove by complete induction that Proposition \ref{prop1} applies to $s_n,x_n$ with
$n\Ge N_s-1$ as long as $\Im s_n\Ge 11/|x_0|$.

First note that Proposition \ref{prop1} applies to $s_{N_s-1},x_{N_s-1}$ by Corollary \ref{finc}.
Suppose for some $N_s\Le n<N_s+|x_0|^{1/2}$
 we have that
$(s_k,x_k)\in\mathcal{R}_2$, $|s_{k}\,^-|<\eta_1$, and $\Im s_k\Ge 11/|x_0|$
for all $k$ with $N_s-1\Le k< n$. We only need to verify the following conditions defining $\mathcal{R}_2$:  $|x_n|>\mathfrak{m}$, $s_n\in  \mathbb{D}_5^+$, $|x_ns_{n}\,^-|>1$ and that for all $ w\in\ell^-$  we have $s_n-2J_n(w)/x_n\in\mathbb{D}_5^+$  and $ |s_{n}\,^- -2J_n(w)/x_n|>|s_{n}\,^-|/8$, since the other conditions are obvious.

By \eqref{xna}, \eqref{sna}, Lemma \ref{eps1} we have $|x_k-x_{k-1}|<\olgoc2|\ln x_0|$  for some $\olgoc2$, $|s_{k}\,^-|<8(n-k)/|x_0|$, and
\begin{equation}\label{sksk}
\left|s_k-s_{k-1}+\frac{48}{5}\frac{1}{x_0}\right|<\frac{1}{|x_0|}
\end{equation}
for $N_s-1\Le k \Le n$.
Thus by \eqref{sns} we have $|s_{n}\,^-|<|s_{N_s}\,^-|+11|x_0|^{-1/2}<16|x_0|^{-1/2}$ and
 $$ |{x_{n}}/{x_0}+i|<|{x_{N_s}}/{x_0}+i|+\olgoc2|x_0|^{-1/2}|\ln
x_0|<\(\olgoc2+1\)|x_0|^{-1/2}|\ln x_0|$$
which implies
\begin{equation}\label{eq139}
 |s_{n}\,^-|<\eta_1,\ \ \text{and}\ \ \ |x_n|=|x_0|(1+o(1))
\end{equation}
and by \eqref{sksk}
$$\Im s_n>\Im \(s_{n-1}+\frac{48}{5x_0}\)-\frac{1}{|x_0|}>0 $$
Thus $|x_n|>9|x_0|/10>\mathfrak{m}$, $s_n\in  \mathbb{D}_5^+$, and $|x_ns_{n}\h,^-|>9|x_0\Im s_n|/10>1$.

A calculation similar to that used in the proof of Proposition \ref{iterworks} shows that
$$|\Re J(w,s_n)|<1\ \ \text{and}\ \  -13/4<\Im J(w,s_n)<-8/5\ \ \ \ \ \ \ \text{for }w\in \oldCb$$
Thus $\Im (s_{n}\h,^- -2J_n(u)/x_n)>\Im s_n-7/|x_n|>0$ and
$$|s_{n}\h,^- -2J_n(w)/x_n|\Ge \max(|\Re s_{n}\h,^-|-3/|x_n|, \Im s_n-7/|x_n|)>|s_{n}\h,^-|/8$$
Thus $s_{n},x_{n}$ are in Region 2, and Proposition \ref{prop1} applies again.

Since $\Im
s_{N_s}<5|x_0|^{-1/2}$ by \eqref{sns}  and $\Im (s_k-s_{k-1})<-8/|x_0|$ by \eqref{sksk}, there must exist some $N_{m}<N_s+|x_0|^{1/2}$ such that $0<\Im s_{N_{m}}<11/|x_0|$. By \eqref{sksk} we have $|\Re (s_k-s_{k-1})|<|\Im (s_k-s_{k-1})|/8$. Thus by \eqref{sns} we have
$$|\Re s_{N_{m}}\h,^-|<|\Re s_{N_s}\h,^- |+|\Im s_{N_s}|/8<2|x_0|^{-1/2}\ \ \ \ \ \ \ \ \ \ \Box$$

\begin{Corollary}
The solution of the
integral equations (21), (22), with initial condition $(s_{N_s},x_{N_s})$,
exists along $\mathcal{C}$ for $N_m-N_s$ loops. Furthermore, we have
$$0<\Im
 s_{N_{m}}<11/|x_0|,\ \ |\Re s_{N_{m}}+\tfrac43|<2|x_0|^{-1/2},\ \ \ |\frac{x_{n}}{x_0}+i|<(c_7+1)|x_0|^{-1/2}\ln |x_0|$$
for all $N_s< n\le N_m$ for some constant $c_7$.

\end{Corollary}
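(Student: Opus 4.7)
The plan is to read off the Corollary as a repackaging of what the induction in \S\ref{PfWholeInt} has already established, making explicit the uniform estimate on $x_n$ that was obtained en route to proving Proposition \ref{Wholeinterval}. The existence statement and the two bounds on $s_{N_m}$ are literally the conclusion of Proposition \ref{Wholeinterval}, once combined with Corollary \ref{finc}, which supplies the starting point $(s_{N_s},x_{N_s}) \in \mathcal{R}_2$ with $|s_{N_s}\,^-| < \eta_1$ and $\Im s_{N_s} \ge 11/|x_0|$. So the only genuinely new task is to extract the uniform bound on $x_n/x_0$ valid for every $n$ with $N_s < n \le N_m$, rather than just at the two endpoints.

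First I would invoke Proposition \ref{Wholeinterval} to ensure that for each $n$ in this range, $(s_n,x_n) \in \mathcal{R}_2$ and the one-step estimates of Lemma \ref{ass} apply; in particular $|s_n\,^-| < \eta_1$ and \eqref{sksk} gives control of $s_n - s_{n-1}$ uniformly. Next I would bound $|x_n - x_{n-1}|$: by \eqref{xna} we have $|x_n - x_{n-1} - L_n| \le \oldDa|x_n|^{-1}(|s_n|^{-1} + |s_n\,^-|^{-1})$, while by Lemma \ref{eps1} the period $L_n = L(s_n)$ is at most $\mathfrak{c}|\ln s_n\,^-| \lesssim \ln |x_0|$ when $s_n\,^-$ is small. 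Since $|s_n| > \beta$-ish away from $0$ and $|s_n\,^-| \gtrsim 1/|x_0|$ until $n = N_m$, the second term in \eqref{xna} is $O(1)$, so uniformly $|x_n - x_{n-1}| \le \olgoc2 \ln |x_0|$ for some constant $\olgoc2$.

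Third, I would telescope: since $N_m - N_s < |x_0|^{1/2}$ by the argument already given in \S\ref{PfWholeInt} (which used that $\Im s$ decreases by roughly $\Im(48/(5x_0))$ per loop until reaching $11/|x_0|$), summing gives
\[
\Big| x_n - x_{N_s} \Big| \le \olgoc2 \, |x_0|^{1/2}\, \ln |x_0| \qquad (N_s < n \le N_m).
\]
Combining with the bound $|x_{N_s}/x_0 + i| < (\oldC0+1)\ln|x_0|/|x_0|$ from \eqref{sns} and setting $c_7 := \olgoc2 + \oldC0$ yields $|x_n/x_0 + i| < (c_7+1)|x_0|^{-1/2}\ln |x_0|$, as claimed.

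The only mildly delicate point is verifying that \eqref{xna} really applies at every step $n$ up to $N_m$, that is, that $(s_n,x_n) \in \mathcal{R}_2$ throughout. This is not a new obstacle, since it is exactly the content of the induction in \S\ref{PfWholeInt}: the conditions $s_n \in \mathbb{D}_5^+$, $|x_n s_n\,^-| > 1$, and the bound $|s_n\,^- - 2J_n(w)/x_n| > |s_n\,^-|/8$ for $w \in \oldCb$ were verified there using \eqref{sksk} and the estimates $|\Re J(w,s_n)| < 1$, $-13/4 < \Im J(w,s_n) < -8/5$ on $\oldCb$. Once that is in hand, no further iteration or fixed-point argument is needed for the Corollary; it is purely a bookkeeping statement that records the uniform-in-$n$ consequence of the step-by-step estimates.
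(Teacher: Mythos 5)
Your proposal is correct and takes essentially the same route as the paper: the paper gives no separate proof of this Corollary because the three bounds (and the $|x_n/x_0+i|$ estimate in particular) are derived in the body of the induction in \S\ref{PfWholeInt}, exactly by the telescoping of $|x_k-x_{k-1}|<\olgoc2\ln|x_0|$ over at most $|x_0|^{1/2}$ steps combined with the $|x_{N_s}/x_0+i|$ bound of \eqref{sns}, which is what you do. Your choice $c_7=\olgoc2+\oldC0$ is a harmless over-estimate (the paper implicitly takes $c_7=\olgoc2$, absorbing the $O(|x_0|^{-1}\ln|x_0|)$ contribution from $x_{N_s}$ into the ``$+1$''); otherwise the reasoning matches.
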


\section{Asymptotics of the discrete constants. Proof of Theorem\,\ref{recu1} (ii)}\label{PfTh2ii}

\subsection{Asymptotics of the discrete constants of motion}\label{disc}
We derive  two more orders of these formulas, needed in the calculation of $\mu$, cf. Proposition
\ref{sto}. For this we need more properties of functions $F$ and $G$ in Proposition
\ref{fg01}. Denote
\begin{align}
  &F_n=F(s_n,x_n,J_n), \,G_n=G(s_n,x_n), \tilde{F}_n=\tilde{F}(s_n,x_n,J_n),\,\,\tilde{G}_n=\tilde{G}(s_n,x_n)\\
&F_{n;a}=F(s_0+\tfrac{48n}{5x_0},x_0,-\tfrac{24}{5})\text{ and }
G_{n;a}=G(s_0+\tfrac{48n}{5x_0},x_0)\\
&\tilde{F}_{n;a}=\tilde{F}(s_{N_{m}}-\tfrac{48ni}{5x_{N_{m}}},x_{N_{m}},-\tfrac{24i}{5}),\,\,
\tilde{G}_{n;a}=\tilde{G}\(s_{N_{m}}-\tfrac{48ni}{5x_{N_{m}}},x_{N_{m}}\)
\end{align}

Let by convention $B_{0}=\tilde{B}_{0}=0$.

\begin{Lemma}\label{valgab}
 $B_n$ and $\tilde{B}_n$ defined in \eqref{eq:defbn} and  \eqref{eq:defgab} for $n\geq 1$ satisfy
 $$B_n=\frac{48}{5}\sum_{k=0}^{n-1}(F_{k;a}+G_{k;a}),\,\ \label{defbn} \tilde{B}_n=\frac{48i}{5}\sum_{k=0}^{n-1}(\tilde{F}_{k;a}+\tilde{G}_{k;a})$$

 \end{Lemma}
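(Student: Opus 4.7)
The strategy is to plug the explicit formulas for $F,G,\tilde F,\tilde G$ from Proposition~\ref{fg01} into the prescribed arguments and observe that the resulting sum telescopes in its $a\ln a$ part, while the remainder produces a log-Gamma via a Pochhammer product.

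Set $a_k = k + \zeta$. Since $\zeta = 5s_0x_0/48$, the first argument of $F_{k;a},G_{k;a}$ is $s_0+\tfrac{48k}{5x_0} = \tfrac{48}{5x_0}\,a_k$. With $J=-\tfrac{24}{5}$ this gives $\tfrac{sx}{4J}=-\tfrac{a_k}{2}$ and $s - \tfrac{2J}{x} = \tfrac{48}{5x_0}(a_k+1)$, so
\[
F_{k;a} \;=\; \tfrac{a_k+1}{2}\,\ln\tfrac{a_k+1}{a_k} \;-\; \tfrac12 .
\]
For $G_{k;a}$ the key algebraic identity is
\[
-\tfrac{2J_{00}}{x_0} \;=\; \tfrac{48\,c_0}{5x_0},
\]
which follows from $J_{00}=(-12+4\sqrt3\,i)/5$ in \eqref{eq:eqj00} together with $\tfrac{\sqrt 3-i}{2} = e^{-\pi i/6} = \sqrt 3\,c_0$. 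Hence $s - \tfrac{2J_{00}}{x} = \tfrac{48}{5x_0}(a_k+c_0)$ and $G_{k;a}=\tfrac12\ln\tfrac{a_k}{a_k+c_0}$.

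Adding and regrouping,
\[
F_{k;a}+G_{k;a} \;=\; \bigl[\tfrac{a_{k+1}}{2}\ln a_{k+1}-\tfrac{a_k}{2}\ln a_k\bigr] \;-\; \tfrac12 \;-\; \tfrac12\ln(a_k+c_0).
\]
Summation from $k=0$ to $n-1$ telescopes the bracketed part to $\tfrac{n'}{2}\ln n'-\tfrac{\zeta}{2}\ln\zeta$ (with $n'=n+\zeta$), and $\sum_{k=0}^{n-1}\ln(a_k+c_0) = \ln\tfrac{\Gamma(n'+c_0)}{\Gamma(\zeta+c_0)}$ by $\Gamma(w+1)=w\Gamma(w)$. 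Multiplying by $\tfrac{48}{5}$ and collecting $n'\ln n'-n' = n'\ln(n'/e)$ (and analogously for $\zeta$) reproduces the closed form \eqref{eq:defbn} for $B_{n-1}$; the convention $B_0=0$ matches the empty sum. The tilde identity is proved by the same calculation applied under the symmetry $s\mapsto \sm$, $u\mapsto\um$ of \eqref{sym_T}: one substitutes $J=-\tfrac{24i}{5}$ and $J(-2,-\tfrac43)=-\tfrac{12i}{5}$ in place of $-\tfrac{24}{5}$ and $J_{00}$, producing the analogue of the key identity with a shift by $\tfrac12$ instead of by $c_0$ and explaining the $\Gamma(\tilde\zeta+\tfrac12)$ in \eqref{eq:defbn}.

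The main obstacle is bookkeeping: choosing consistent branches of the logarithm so that $\ln s-\ln(s-2J/x)=\ln a_k-\ln(a_k+1)$ (and its $G$, $\tilde F$, $\tilde G$ analogues) hold simultaneously for all $k$, and reconciling the discrete indexing of \eqref{eq:defbn} with the empty-sum convention $B_0=\tilde B_0=0$.
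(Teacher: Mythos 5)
Your telescoping-plus-Pochhammer argument is exactly the ``elementary sums, telescopic'' computation the paper gestures at in its one-line proof, and your algebra (the identities $\tfrac{sx}{4J}=-\tfrac{a_k}{2}$, $J_{00}=-\tfrac{24}{5}c_0$, the regrouping into $\tfrac{a_{k+1}}{2}\ln a_{k+1}-\tfrac{a_k}{2}\ln a_k-\tfrac12-\tfrac12\ln(a_k+c_0)$, and the Gamma product) checks out. Note that after multiplying by $\tfrac{48}{5}$ you actually land on $\tfrac{24}{5}\bigl(n'\ln\tfrac{n'}{e}-\ln\Gamma(n'+c_0)+\ln\Gamma(\zeta+c_0)-\zeta\ln\tfrac{\zeta}{e}\bigr)$ with $n'=n+\zeta$, which matches \eqref{eq:defgab} and your identification with $B_{n-1}$ rather than the literal $\tfrac{5}{24}$ prefactor of \eqref{eq:defbn} or the $B_n$ index in the lemma statement; that mismatch is an inconsistency in the paper's own conventions, not an error in your derivation.
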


\begin{proof}
 The sums above are, up to elementary sums, telescopic;
 the calculations are straightforward.
\end{proof}

\begin{Note}\label{u04}
For generic $u_0$, $\tilde{B}_n$ would contain a term of order $\ln(n+1)$, but the term vanishes for the special choice $u_0=-4$, which makes the calculation simpler.
\end{Note}
In the following $O(\cdot)$ denotes  $n-$independent error terms.

With $N_0$ defined in \eqref{defs} we study the regions  $0\Le n \Le N_0$ and $N_m-N_0\Le n \Le N_m$. The following estimate is needed.
\begin{Lemma}\label{fngn}
For $n\Le 2N_0$ we have
\begin{equation}\label{ffgg}
|F_n-F_{n;a}|+|G_n-G_{n;a}|+|\tilde{F}_n-\tilde{F}_{{n};a}|+|\tilde{G}_{n}-\tilde{G}_{{n};a}|=O(x_0^{-1}\ln x_0)
\end{equation}
\end{Lemma}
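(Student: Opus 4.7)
The plan combines (a) refined deviation estimates for $(s_n,x_n,J_n)$ from their leading asymptotic values, and (b) careful derivative estimates for $F,G,\tilde F,\tilde G$ that exploit cancellations intrinsic to their functional forms. Throughout write $s_n^a:=s_0+\tfrac{48n}{5x_0}$ and $\omega_n^a:=-48/(5x_0)$.

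For (a), I iterate Lemma\,\ref{ass} over the range $n\leqslant 2N_0$. In this range Lemma\,\ref{Consistency1}, together with Proposition\,\ref{Wholeinterval} and Corollary\,\ref{finc}, forces $|s_k|\sim k/|x_0|$ and $|s_k^-|\sim 1$, so the per-step error bound in \eqref{sna} contributes only $O(|x_0|^{-2}\ln|x_0|)$. Splitting
\[
-\frac{2J_k}{x_k}=-\frac{48}{5x_0}-\frac{2(J_k+24/5)}{x_0}-2J_k\Bigl(\frac{1}{x_k}-\frac{1}{x_0}\Bigr),
\]
Lemma\,\ref{eps1} gives $|J_k+24/5|\lesssim|s_k\ln s_k|\lesssim k\ln|x_0|/|x_0|$, while the summed form of \eqref{xna} yields $|x_k-x_0|\lesssim k\ln|x_0|$. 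Thus each correction term at step $k$ is $O(k\ln|x_0|/|x_0|^2)$, and summing over $k=0,\dots,n-1$ produces
\[
|s_n-s_n^a|\lesssim \frac{n^2\ln|x_0|}{|x_0|^2},\qquad \bigl|J_n+\tfrac{24}{5}\bigr|\lesssim \frac{n\ln|x_0|}{|x_0|},\qquad |x_n-x_0|\lesssim n\ln|x_0|.
\]

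For (b), introduce $\omega:=2J/x$ so that $F(s,x,J)=H(s,\omega)$ with $H(s,\omega)=(\tfrac{s}{2\omega}-\tfrac12)\ln\tfrac{s}{s-\omega}-\tfrac12$. Along the segment from $(s_n^a,\omega_n^a)$ to $(s_n,\omega_n)$ the ratio $|\omega/s|\lesssim 1/(n+\zeta)$ remains uniformly small. Differentiating and substituting $\ln\tfrac{s}{s-\omega}=\omega/s+\tfrac12(\omega/s)^2+\cdots$ exposes the crucial cancellations
\[
\partial_s H=-\frac{\omega}{4s^2}+O\!\Bigl(\frac{\omega^2}{s^3}\Bigr),\qquad \partial_\omega H=-\frac{1}{4s}+O\!\Bigl(\frac{\omega}{s^2}\Bigr).
\]
Combined with $|\omega_n-\omega_n^a|\lesssim|J_n+24/5|/|x_n|+|x_n-x_0|/(|x_n||x_0|)\lesssim n\ln|x_0|/|x_0|^2$, the mean value theorem applied on this segment yields
\[
|F_n-F_{n;a}|\lesssim \frac{|\omega|}{|s|^2}|s_n-s_n^a|+\frac{|\omega_n-\omega_n^a|}{|s|}\lesssim \frac{\ln|x_0|}{|x_0|},
\]
using $|s|\sim n/|x_0|$ and $|\omega|\sim 1/|x_0|$; both contributions are independent of $n$.

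For $G(s,x)=\tfrac12[\ln s-\ln(s-2J_{00}/x)]$ the same argument applies, in a simpler form, since there is no $J$-dependence and no multiplicative prefactor; it gives $|G_n-G_{n;a}|\lesssim\ln|x_0|/|x_0|$. For $\tilde F$ and $\tilde G$, the symmetry \eqref{sym_T} identifies the analysis near $s=-\tfrac43$ with the one near $s=0$: running the same iteration with $s^-$ in place of $s$ (or, symmetrically, backward from $N_m$) and repeating the derivative computations verbatim produces the matching bounds. The main obstacle is the cancellation in step (b): the naive bound $|\partial_s H|\lesssim 1/|\omega|\sim |x_0|$ would give $|F_n-F_{n;a}|\lesssim n^2\ln|x_0|/|x_0|$, which fails for large $n\sim |x_0|^{3/4}$; only the explicit cancellation $\partial_s H=O(|\omega|/|s|^2)$ makes the uniform-in-$n$ bound $O(\ln|x_0|/|x_0|)$ possible.
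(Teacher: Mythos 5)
Your argument tracks the paper's proof closely: iterate Lemma~\ref{ass} to get the relative deviations $s_n = s_n^a\bigl(1+O((n+1)x_0^{-1}\ln x_0)\bigr)$, $J_n = -\tfrac{24}{5}\bigl(1+O(nx_0^{-1}\ln x_0)\bigr)$, $x_n = x_0\bigl(1+O(nx_0^{-1}\ln x_0)\bigr)$, and then Taylor-expand $F$ and $G$ about the approximate values; the paper's terse ``Taylor expansion'' is exactly the derivative computation you carry out, and you correctly isolate the cancellation that makes the estimate uniform in $n$. Two small remarks. The cancellation becomes transparent once one observes that $F(s,x,J)$ depends on its arguments only through $\tau := 2J/(xs)$, namely $F = -\tfrac{1-\tau}{2\tau}\ln(1-\tau) - \tfrac12$, an analytic function with bounded derivative for $\tau$ bounded away from $1$; the three relative-error bounds above then yield directly $|\tau_n - \tau_n^a| = O(x_0^{-1}\ln x_0)$, and $G$ similarly depends only on $2J_{00}/(xs)$. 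Also, for $n = O(1)$ the ratio $|\omega/s|$ is of order $1$ rather than small (since $|s_0x_0|\in(1,10)$ makes $|\zeta|=O(1)$), so the power-series expansions you invoke for $\partial_s H$, $\partial_\omega H$ do not literally apply there; the conclusion still holds because for such $n$ the deviations $|s_n - s_n^a|$, $|\omega_n - \omega_n^a|$ are already $O(x_0^{-2}\ln x_0)$ so the crude derivative bound $O(|x_0|)$ suffices, but this should be stated rather than subsumed under the claim that $|\omega/s|\lesssim 1/(n+\zeta)$ is ``uniformly small.'' (The sign of $\partial_s H$ should be $+\omega/(4s^2)$, which is immaterial to the estimate.)
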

\begin{proof}
Define $\delta_{J;n}$ and $\delta_{z;n}$ by $J_{n}=-48/5(1+\delta_{J;n})$ and
$s_{n}=(s_{0}+\frac{48n}{5x_0})(1+\delta_{z;n})$.  It follows from Lemma \ref{ass} that
\begin{equation}\label{jxs}
\delta_{J;n}=O\(\frac{n+1}{x_0}\);\ x_{n+1}-x_{n}=O(\ln x_0);\
s_{n+1}-s_{n}=
\frac{-2J_0}{x_n}+O\(\frac{(n+1)\ln{x_0}}{x_0^2}\)\\
\end{equation}
implying \vspace{-0.25cm}$$s_{n}=s_{0}+\frac{48n}{5x_0}+O(x_0^{-2}\ln{x_0})$$\vspace{-0.25cm}
 and
 \begin{equation}\label{snxn}
   s_nx_n-s_{n-1}x_{n-1}=\frac{48}{5}+O\(\frac{(n+1)\ln{x_0}}{x_0}\)\Rightarrow
 \delta_{z;n}=O((n+1)x_0^{-1}\ln{x_0})
 \end{equation}
The estimates for $F_n-F_{n;a}$ and $G_n-G_{n;a}$ follow by Taylor expansion,  using \eqref{jxs}
and \eqref{snxn} and the fact that $\frac{N_0}{x_0}=o(1)$.
The proof for $\tilde{F}$ and $\tilde{G}$ is analogous.
\end{proof}

\subsection{Proof of Theorem \ref{recu1} (ii)}\label{partii}
{\em{Case}} {\bf{I.}} Consider $ n \Le N_0$. By \eqref{jxs} we have
\begin{equation}\label{jdif}
(J_n+24/5)(G_n+F_n)=O(\ln x_0/x_0)
\end{equation}

We then have by \eqref{xje1}, Lemma \ref{fngn} and \eqref{jdif}
\begin{equation}\label{xjxj}
\mathcal{Q}_n-\mathcal{Q}_0=
B_n+O(x_0^{-1/4}\ln x_0)
\end{equation}
{\em{Case}} {\bf{II.}} Consider $n$ with $N_0< n \Le N_m-N_0$. We first show that
$|s_n|>\tfrac12|x_n|^{-\frac14}$ and $|s_{n-}|>\tfrac12|x_n|^{-\frac14}$ for
$N_0/2\Le n\Le N_{m}-N_0/2$. We need to distinguish two subcases.

{\bf{II.a}} For $n\Le \eta_1|x_0|/8 := N_2$ (note that $N_2>N_0$),  Lemma \ref{eps1}
 and Lemma \ref{ass} imply
 \begin{equation}\label{sns01}
\left|s_n-s_0-\frac{48ni}{5|x_0|}\right|<1/|x_0|;\ |x_n/x_0-1|<1/20
 \end{equation}
 and similarly
  \begin{equation}\label{sns02}
 \left|s_n -s_{N_{m}}-\frac{48(N_{m}-n)i}{5|x_0|}\right|<1/|x_0|;\ |x_n/x_0+i|<1/20
  \end{equation}
 for $N_{m}-N_2< n\Le N_{m}$.

 {\bf{II.b}} For $N_2< n\Le N_{m}-N_2$ we have $|s_n|\Ge c_1\eta_1/2$ and $|s_{n}\h,^-|\Ge c_1\eta_1/2$ by Lemma \ref{Consistency1}. A
 straightforward calculation using \eqref{sns01} and \eqref{sns02} shows that $|s_n|>\frac12|x_n|^{-\frac14}$ and $|s_{n}\h,^-|>\frac12|x_n|^{-\frac14}$ for
$N_0/2\Le n\Le N_{m}-N_0/2$.

It follows from Lemma \ref{m0}, \eqref{sn2} and Lemma \ref{Consistency1} that
\begin{multline}\frac{1}{R(v,s(v))}-\frac{1}{R_n(v)}=\frac{s_n-s(v)}{R_n(v)R(v,s(v))(R_n(v)+R(v,s(v)))}
\\=O\(x_0^{-1}s_n^{-3/2}\)+O\(x_0^{-1}(s_{n}\h,^-)^{-3/2}\)
\end{multline}
This equation together with \eqref{rsrn0}, \eqref{sn2}, and \eqref{xn2} implies
that
\begin{multline*}
x_{n+1}=x_n+L_n-\oint\frac{s(v)-s_n}{2R_n(v)^3}dv+O\((x_0s_n)^{-2}\)+O\((x_0s_{n}\h,^-)^{-2}\)
=x_n+L_n\\+\oint\frac{J_n(v)}{x_n R_n(v)^3}dv+O\(x_0^{-3/2}\)
=x_n+L_n+\frac{1}{x_n}\oint \tilde{J}_n(v)\frac{\partial Q(v,s_n)}{\partial v}dv+\frac{\rho(s_n)J_n^2}{2x_n}+O\(x_0^{-3/2}\)\\
\end{multline*}
In the equation above we used  \eqref{eq:iden1} to integrate by parts:

$$\oint\frac{\tilde{J}_n(v)}{R_n(v)^3}dv=\oint \tilde{J}_n(v)\frac{\partial Q(v,s_n)}{\partial v}dv+\frac{\rho(s_n)J_n^2}{2}=J_nQ(u_n,s_n)+\frac{\rho(s_n)J_n^2}{2}$$
since $R_n(v)Q(v,s_n)$ is analytic and its loop integral is 0.
Therefore
\begin{equation}\label{xmid}
x_{n+1}=x_n+L_n+\frac{J_nQ(u_n,s_n)}{x_n}+\frac{\rho(s_n)J_n^2}{2x_n}+O\(x_n^{-3/2}\)
\end{equation}
We rewrite \eqref{jn1} using \eqref{sn2}, \eqref{rsrn0} and \eqref{eq:iden1} as
\begin{multline}\label{dJn}
J_{n+1}-J_n=\oint\frac{s_{n+1}-s_n}{R_n(v)+R_{n+1}(v)}dv\\
=-\frac{2J_n}{x_n}\oint\frac{1}{(R_n(v)+R_{n+1}(v))}dv+
\frac{J_nL_n^2}{x_n^2}+O\(x_0^{-3}s_n^{-1}\)+O\(x_0^{-3}(s_{n}\h,^-)^{-1}\)\\
=-\frac{2J_n}{x_n}\(\frac{L_n}{2}-\oint\frac{-J_n(v)}{x_nR_n(v)(R_n(v)+R_{n+1}(v))^2}dv\)+
\frac{J_nL_n^2}{x_n^2}+O\(\frac{\ln s_n}{x_0^3s_n}\)+O\(\frac{\ln s_{n}\h,^-}{x_0^3s_{n}\h,^-}\)
\end{multline}
 Now
\begin{multline}\label{IntRn}
\oint\frac{1}{R_n(v)(R_n(v)+R_{n+1}(v))^2}dv=
\oint\frac{1}{4R_n^3(v)}dv+O\(x_n^{-1}(s_ns_{n}\h,^-)^{-2}\)\\
=\frac{\rho(s_n)J_n}{4}+O\(x_n^{-1}(s_ns_{n}\h,^-)^{-2}\)
\end{multline}
 Using  \eqref{dJn} and \eqref{IntRn} we get
\begin{equation}\label{jmid}
J_{n+1}-J_n=-\frac{J_nL_n}{x_n}+\frac{J_n^2L_n^2}{x_n^2}-\frac{\rho(s_n)J_n^3}{2x_n^2}+O(x_0^{-5/2})
\end{equation}
which, combined with  \eqref{xmid} implies
\begin{equation}\label{xjmid}
x_{n+1}J_{n+1}-x_nJ_n=x_n^{-1}Q(u_0,s_n)J_n^2+O(x_n^{-3/2})=-\tfrac12 Q(u_0,s_n)J_n(s_{n+1}-s_n)+O(x_0^{-3/2})
\end{equation}
On the other hand,
$$\frac{d Q(u_n,s)J(s)}{d
  s}=O\((s_ns_{n}\h,^-)^{-2}\)=O(x_0^{1/2})$$
implying
\begin{equation}
  \label{eq:dQ}
 Q(u_0,s_0)J(s_n)-Q(u_0,s)J(s)=O(x_0^{-1/2})
\end{equation}
for $s$ between $s_n$ and $s_{n+1}$, and thus integrating  \eqref{eq:dQ} we get
\begin{equation}\label{qjmid}
(s_{n+1}-s_n)Q(u_0,s_n)J_n=\int_{s_n}^{s_{n+1}}Q(u_0,s)J(s)ds+O(x_0^{-3/2})
\end{equation}
It follows from \eqref{xjmid},  \eqref{qjmid} and Lemma \ref{ass} that
$$x_{n+1}J_{n+1}-x_nJ_n= -\frac{1}{2}\int_{s_n}^{s_{n+1}}Q(u_0,s)J(s)ds+O(x_0^{-3/2})$$
Summing in $n$ we get
\begin{equation}\label{qn0q0}
\mathcal{Q}_{n}=\mathcal{Q}_{N_0}-\frac{1}{2}\int_{s_{N_0}}^{s_{n}}Q(u_0,s)J(s)ds+O(x_0^{-1/2})
\end{equation}
Now by {\bf{I.}} we have $\mathcal{Q}_{N_0}-\mathcal{Q}_{0}=\frac{4\sqrt{3}i}{5}\ln (N_0)+g_a+O(x_0^{-1/4}\ln x_0)$.
Since $Q(u_0,s)J(s)=-\frac{8\sqrt{3}i}{5s}+O(x_0\ln s)$ by definition and
$s_{N_0}=\frac{48N_0 }{5x_0}+O(x_0^{-1})$ by \eqref{sns01}, we have
$$\int_{s_{0}}^{s_{N_0}}Q(u_0,s)J(s)ds=-\frac{8\sqrt{3}i}{5}\ln \frac{48N_0}{5s_0x_0}+O(x_0^{-1/4}\ln x_0)
$$
Thus by \eqref{qn0q0} we have
$$\mathcal{Q}_{n}=\mathcal{Q}_{0}+g_a+\frac{4\sqrt{3}i}{5}\ln\frac{5s_0x_0}{48}
-\frac{1}{2}\int_{s_{0}}^{s_{n}}Q(u_0,s)J(s)ds+O(x_0^{-1/4}\ln x_0)$$

{\em{Case}} {\bf{III.}} The remaining case  $N_m-N_0< n \Le N_m$ is similar to {\bf{I.}} by symmetry and we omit the details. We get
\begin{equation}\label{xjxjn}
\mathcal{Q}_{N_m}-\mathcal{Q}_n=
\tilde{B}_{N_m-n}+O(x_0^{-1/4}\ln x_0)
\end{equation}
\subsection{Proof of Theorem \ref{recu1} (iii)}\label{partiii}
It follows from \eqref{sn2} that
\begin{multline}\label{sndd}
\frac{s_{n+1}-s_n}{J_n^2}=-\frac{2}{\mathcal{Q}_n}+ \frac{2L_n}{x_n^2J_n}+O\(x_0^{-3}\ln^2 x_0\)
\\
=-\frac{2}{\mathcal{Q}_0}+ \frac{2L_n}{x_n^2J_n}+
\frac{2(\mathcal{Q}_n-\mathcal{Q}_0)}{x_0^2J_0^2}+O\(x_0^{-3}\ln^2 x_0\)
\end{multline}
Now using the definitions of $J$ and $L$ we have for $s(u)$ with $u$ in the $n$th loop on $\mathcal{C}$  (between $u_n$ and $u_{n+1}$)
\begin{multline}\label{j2j2d}
\frac{1}{J_n^2}-\frac{1}{J^2(s)}=\frac{(J_n+J(s))L_n(s-s_n)}{2J_n^2J^2(s)}+O\(\frac{(s-s_n)^2}{s_n}\)\\
=\frac{L_n(s-s_n)}{J_n^3}+O\(\frac{(s-s_n)^2}{s_n}\)+O\(\frac{(s-s_n)^2}{s_{n-}}\)+O\(\frac{\ln x_0(s-s_n)}{x_0}\)
\end{multline}
Integrating both sides gives
\begin{multline}\label{jsquare}
\frac{s_{n+1}-s_n}{J_n^2}-\int_{s_n}^{s_{n+1}}\frac{1}{J^2(s)}ds=\frac{L_n(s_{n+1}-s_n)^2}{2J_n^3}+O\(\frac{1}{s_n x_0^3}\)+O\(\frac{1}{s_{n}\h,^- x_0^3}\)+O\(\frac{\ln x_0}{x_0^3}\)\\
=\frac{2L_n}{x_n^2J_n }+O\(\frac{1}{s_n x_0^3}\)+O\(\frac{1}{s_n\h,^- x_0^3}\)+O\(\frac{\ln x_0}{x_0^3}\)
\end{multline}
This together with \eqref{sndd} implies
\begin{equation}\label{j2q}
\int_{s_n}^{s_{n+1}}\frac{1}{J^2(s)}ds=-\frac{2}{x_0J_0}+\frac{2(\mathcal{Q}_n-\mathcal{Q}_0)}{x_0^2J_0^2}+O\(x_0^{-3}\ln^2x_0\)
\end{equation}
Using \eqref{eq:eqj21} to integrate $1/J^2$ we get
$$\int_{s_n}^{s_{n+1}}\frac{1}{J^2(s)}ds
=-\frac{5}{24\pi i}(\mathcal{K}(s_{n+1})-\mathcal{K}(s_n))$$
This together with \eqref{j2q} implies
$$\mathcal{K}(s_{n+1})-\mathcal{K}(s_n)=\frac{48\pi i}{5\mathcal{Q}_0}+\frac{2\pi i(\mathcal{Q}_n-\mathcal{Q}_0)}{x_0^2J_0}+O\(x_0^{-3}\ln x_0\)\ \ \ \ \text{for } 0\leq n<N_m$$
Summing in $n$ we get
\begin{equation}
  \label{eq:eq44d}
\mathcal{K}(s_n)= \mathcal{K}(s_0)+\frac{48\pi i n}{5\mathcal{Q}_0}+\frac{2\pi i \sum_{j=0}^{n-1}(\mathcal{Q}_j-\mathcal{Q}_0)}{x_0^2J_0}+O\(x_0^{-2}\ln^2 x_0\)
\end{equation}
Now by \eqref{eq:defgab} and Theorem \ref{recu1} (ii) we have $|B_k|\Le \mathfrak{c} \ln x_0$. Thus for $0\Le n\Le N_0$ we have
\begin{equation}\label{q0qn1}
\sum_{j=0}^{n}(\mathcal{Q}_j-\mathcal{Q}_0)=\sum_{j=0}^{n} B_j=O(x_0^{3/4} \ln x_0)
\end{equation}
while for $N_0< n \Le N_m-N_0$ we have
\begin{multline}\label{q0qn2a}
\sum_{j=0}^{n}(\mathcal{Q}_j-\mathcal{Q}_0)=\sum_{j=0}^{N_0}B_j+\sum_{j=N_0+1}^{n}(\mathcal{Q}_j-\mathcal{Q}_{N_0})
+(n-N_0)B_{N_0}\\
=nB_{N_0}-\frac{1}{2}\sum_{j=N_0}^{n}\int_{s_{N_0}}^{s_{j}}Q(u_0,s)J(s)ds+O(x_0^{3/4} \ln x_0)
\end{multline}

Now by Lemma \ref{ass} and Theorem \ref{recu1} (ii) we have
$\frac{(s_{j+1}-s_j)}{2J_j^2}=-\frac{1}{x_0J_0}+O(x_0^{-2}\ln x_0) $. Thus
\begin{multline}\label{q0qn2b}
-\frac{1}{2}\sum_{j=N_0}^{n}\int_{s_{N_0}}^{s_{j}}Q(u_0,s)J(s)ds
=x_0J_0\sum_{j=N_0}^{n}\frac{(s_{j+1}-s_j)}{-2J_j^2}\(-\frac{1}{2}\int_{s_{N_0}}^{s_{j}}Q(u_0,s)J(s)ds\)+O(\ln x_0)\\
=x_0J_0\int_{s_{N_0}}^{s_n}\frac{1}{4J^2(s)}\int_{s_{N_0}}^{s}Q(u_0,s)J(s)ds+O(\ln x_0)
\end{multline}
where we noted that the middle term is a Riemann sum, that we replaced by an
integral plus the usual error bound in terms of the derivative. Using \eqref{eq:eqj21} to write $1/J^2$ in terms of $(\mathcal{K}-1)'$ and
integrating by parts we get
\begin{multline}
  \label{eq:eqct2a}
\int_{s_{N_0}}^{s_n}\frac{1}{4J^2(s)}\int_{s_{N_0}}^{s}Q(u_0,t)J(t)dtds=
-\frac{5}{96\pi i}\int_{s_{N_0}}^{s_n}\(\int_{s_{N_0}}^{s}Q(u_0,t)J(t)dt\)(\mathcal{K}(s)-1)'ds\\
= -\frac{5}{96\pi i}(\mathcal{K}_n-1)\int_{s_{N_0}}^{s_n}Q(u_0,s)J(s)ds -\frac{5}{96\pi i}\int_{s_{N_0}}^{s_n}Q(u_0,s)(J(s)-\hat{J}(s))ds+O(x_0^{-1/4})\\
=-\frac{5}{96\pi i}\int_{s_{N_0}}^{s_n}Q(u_0,s)(\mathcal{K}_nJ(s)-\hat{J}(s))ds+O(x_0^{-1/4})
\end{multline}
Combining \eqref{q0qn2a}, \eqref{q0qn2b} and \eqref{eq:eqct2a} we have
\begin{equation}\label{q0qn2}
\sum_{j=0}^{n}(\mathcal{Q}_j-\mathcal{Q}_0)=nB_{N_0}-\frac{5\mathcal{Q}_0}{96\pi i}\int_{s_{N_0}}^{s_n}Q(u_0,s)(\mathcal{K}_nJ(s)-\hat{J}(s))ds+O(x_0^{3/4}\ln x_0)
\end{equation}

Since Theorem \ref{recu1} (ii) implies $\mathcal{Q}_n-\mathcal{Q}_{N_m}=O(1)$ for $N_m-N_0\Le  n\Le N_m$, we see that
\eqref{q0qn2} is also valid for $N_m-N_0\Le  n\Le N_m$.

Since $K_n$ is bounded, \eqref{eq:eq44d} and \eqref{q0qn2} imply that
\begin{equation}\label{knk0}
\mathcal{K}_n=-\frac{2\pi i n}{x_0}+ O(\ln x_0/x_0)
\end{equation}
Note also $\mathcal{Q}_0=-24x_0/5+O(\ln x_0)$ since $J_0=-24/5+O(x_0^{-1}\ln x_0)$. This together with \eqref{q0qn2} and \eqref{knk0} imply that
\begin{equation}\label{q0qn3}
\sum_{j=0}^{n}(\mathcal{Q}_j-\mathcal{Q}_0)=nB_{N_0}+\frac{1}{4\pi i}\int_{s_{N_0}}^{s_n}Q(u_0,s)(-2\pi i nJ(s)-x_0\hat{J}(s))ds+O(x_0^{3/4}\ln x_0)
\end{equation}
Now,  by definition,  $Q(u_0,s)(-2\pi i nJ(s)-x_0\hat{J}(s))=-\frac{16\sqrt{3}\pi}{5s}n+O(x_0\ln s)$ and by \eqref{sns01} we have
$s_{N_0}=\frac{48N_0 }{5x_0}+O(x_0^{-1})$.
Thus
\begin{multline}
\int_{s_{N_0}}^{s_n}Q(u_0,s)(-2\pi i nJ(s)-x_0\hat{J}(s))ds=\\
\int_{s_{0}}^{s_n}Q(u_0,s)(-2\pi i nJ(s)-x_0\hat{J}(s))ds+\frac{16\sqrt{3}\pi}{5}n\ln \frac{48N_0}{5s_0x_0}+O(x_0^{3/4}\ln x_0)\end{multline}
This together with \eqref{eq:defgab} and \eqref{q0qn3} implies
\begin{equation}\label{q0qnf}
\sum_{j=0}^{n}(\mathcal{Q}_j-\mathcal{Q}_0)= ng_a+\frac{1}{4\pi i}\int_{s_{0}}^{s_n}Q(u_0,s)(-2\pi i nJ(s)-x_0\hat{J}(s))ds+\frac{4\sqrt{3} i}{5}n\ln\frac{5s_0x_0}{48}+O(x_0^{3/4}\ln x_0)
\end{equation}
Comparing this with \eqref{q0qn1} we see that \eqref{q0qnf} is in fact valid for $0\le n\le N_m$. The conclusion then follows from \eqref{eq:eq44d} and \eqref{q0qnf}.

\subsection{Proof of Proposition\,\ref{c123}}\label{Pfc123}

(i) It follows from Lemma \ref{eps1} and \eqref{eq:eq44d} that
$$ 1+\frac{\pi(s_n+\tfrac43)}{J(-\tfrac43)}= \frac{\pi i s_0}{J(0)}-\frac{2\pi i J(0)n}{J_{0}x_{0}}+\frac{2\pi i \phi_n}{x_0^2J(0)}+O(x_0^{-5/4}\ln x_0)+O\((s_{n}\h,^-)^{3/2}\)$$
This implies \eqref{snn}. A calculation using \eqref{eq:defgab} and Theorem
\ref{recu1} (iii) shows that $\Re \frac{2\phi_n}{x_0^2}=O(x_0^{-1})$ and $\Im \frac{2\phi_n}{x_0^2}=O(x_0^{-1}\ln x_0)$. Since
$0<\Im s_{N_m}<11/|x_0|$, \eqref{snn} implies $N_{m}=\frac{|x_0|}{2\pi}+O(\ln x_0)$.

(ii) This follows directly from (i).

(iii) \eqref{xnm1} follows from Theorem \ref{recu1} (ii). By (i) and (ii) we have $s_{N_{m}\h,^-} =O(x_0^{-1}\ln x_0)$, and thus by Lemma \ref{eps1} we have $J_{N_{m}}=iJ_0+O(x_0^{-1}(\ln x_0)^2)$. The rest follows from \eqref{xnm1}.
$\Box$

\section{Application: finding the Stokes multiplier}\label{find}
As an application of the discrete constants of motion, in this section we  find the
Stokes multiplier $\mu$ by
analyzing the tritronqu\'ee solution $y_t(z)$ of P$_1$ specified by the sector of analyticity
\eqref{sectorinz}.

\subsection{Overview of the approach}\label{Overmu}

 The solution $y_t$ is meromorphic; this
was known since Painlev\'e, and proving meromorphicity does not require a
Riemann-Hilbert reformulation, see e.g. \cite{CostinOR,Hinkkanen} for direct
proofs and references.
  Starting with  a large $z\in\RR^+$ we analytically continue $y_t$
(i) anticlockwise on an arccircle until $\arg z=\pi$   and (ii) clockwise on
an arccircle until $\arg z=-\pi$. The continuation (ii) traverses the pole
sector, $\arg z\in (-\pi,-3\pi/5)$.
Because of the above-mentioned
meromorphicity, we must have
\begin{equation}
  \label{eq:mer1}
  y_t(|z|e^{i\pi})=y_t(|z|e^{-i\pi})
\end{equation}

After the normalization \eqref{chvarx} this tritonque\'e $y_t(z)$ becomes $h_t(x)$, solution of \eqref{eq:eqp} specified by  \eqref{sectorinx}. The analytic continuation corresponds in the new variables to the following: We start with large
$x$ with  $\arg x=\pi/4$ and (i') analytically continue $h_t(x)$ anticlockwise, until $\arg
x=3\pi/2$, and  (ii') analytically continue $h_t(x)$ clockwise, until $\arg x=-\pi$. The
single-valuedness of the solutions of equation \eqref{eq:mer1} implies
 \begin{equation}\label{hpm}
h_t(|x|e^{3\pi i/2}) = -h_t(|x|e^{-\pi i})-2+\frac{8}{25|x|^2}
\end{equation}

Recall that a {\em{Stokes line}} is a direction at which the constant  $C$ in the transseries of solutions changes: the {\em{Stokes phenomenon}}, and in fact $C=C(\arg x)$ is piecewise constant, see \cite{cch}; orthogonal to them are the {\em{antistokes lines}}, directions along which some exponential in the transseries solutions is purely oscillatory. See \cite{imrn,cch}.
By Theorem 2  (iii) of \cite{imrn}
the value of $C$ jumps by $\mu$, cf. also \cite{cch}.

By Theorem 2 of \cite{imrn}, $\RR^+$ and $\RR^-$ are the (only) Stokes lines of \eqref{eq:eqp} (the Stokes lines coincide with directions along which some exponenetial in the transseries has maximal decay)
and the antistokes lines are $i\RR^+$ and $i\RR^-$. The tritronque\'e $h_t$, with zero constant in its transseries in the first quadrant, $C(\arg x)=C_+=0$ for $\arg x\in(0,\frac{\pi}{2})$, is analytically  continued (i') traversing the antistokes line $\arg(x)=\frac{\pi}{2}$ ($C$ does not change) and reaches the Stokes line $\arg x=\pi$, where $C_-=\mu$; $h_t$ continues to have a transseries with the same $C$ until the next antistokes line $\arg x=\frac{3\pi}{2}$ beyond which it enters a pole region; upon analytic continuation  (ii')  $h_t$ traverses the Stokes line $\arg x=0$ gives $C(0-)=-\mu$, then crosses the antistokes line $\arg(x)=-\frac{\pi}{2}$ entering the pole sector.

For $y_t(z)$ continuation (i)  means that $z$ traverses the antistokes line $\arg(z)=\pi/5$ and reaches the Stokes line $\arg z=3\pi/5$, while (ii') traverses the Stokes line $\arg z=-\pi/5$, the antistokes line $\arg(z)=-3\pi/5$, entering the pole sector.

In variable $z$, and using the five-fold symmetry, we see that

\begin{Note}\label{StokesAz} Their position in the original $z$ plane are $\arg z\in\{-\pi/5,3\pi/5,
7\pi/5\}$ (Stokes)
and $\arg z\in\{-3\pi/5,\pi/5, \pi\}$ (antistokes).
The lines bordering the sectors of symmetry \eqref{eq:sectorsz} are antistokes lines for some tritronque\'e.
\end{Note}

Going back to the normalized form $h_t(x)$, along
$\RR^+$ the change is given by
\begin{equation}
  \label{eq:sjump}
C_-=:C(0^-)=C_++\mu=:C(0^+)+\mu
\end{equation}
See also \cite{cch}, where we also link \eqref{eq:sjump} to the singularities in Borel plane.   For
the tritronqu\'ee $C(0_+)=0$. Along $\RR^-$, we have
$C(\pi+0)=C(\pi-0)-\mu=C(0^+)-\mu =-\mu$ for the same
$\mu$ as in \eqref{eq:sjump} because of Lemma 17 in \cite{cch} and since the direction of
continuation in (i) is opposite to that in (ii). In (ii), the third quadrant,
a sector with poles in $x$ is traversed. In this region $h_t$ is described by constants of motion (cf. Theorem \ref{recu1} (ii) and \eqref{snn}), which are
valid until $x$ reaches $\mathbb{R}^-$ when it is {\em again described by a transseries}; the asymptotic expansions of the
constants of motion that we obtain depend on $C$. The transseries
representation of $h_t$ also depends on $C$ in a way visible in the first few
terms when $\arg x=-\pi$ or $3\pi/2$. Eq. \eqref{hpm} is a nontrivial
equation for $\mu$ which determines it uniquely. The fact that $\mu$ is uniquely
determined is not surprising given that there is only one solution, the
tritronqu\'ee, with
algebraic behavior in the region \eqref{sectorinz}, cf. \cite{invent}, Proposition
15.
\subsection{The transseries regions}
Our goal is to find the value of the Stokes multiplier $\mu$ using \eqref{hpm}. By \cite{invent} $h$ has the asymptotic expansion \eqref{eq:eq51} in the region $\Im x<0,~~\Re x \in[-\tfrac43\ln |x|,0]$. Similarly, since $y(z)$ is continuous in $z$, by \eqref{chvarx} we have when $h_t(x)\sim  -2$ when $x$ is in the region $\Re x<0,~\Im(x)\in[-\tfrac43\ln |x|,0]$. A calculation similar to \eqref{eq:eq51} (cf.\cite{invent}) gives the asymptotic expansion
\begin{equation}
  \label{eq:eq51n}
 h_t(x)\sim  -2-h_{0}(\tilde{\xi})-\frac{1}{x}h_{1}(\tilde{\xi})-\frac{1}{x^2}h_{2}(\tilde{\xi})+\cdots
\end{equation}
where $\tilde{\xi}=\tilde{\mu} e^{ix}$ and  $\tilde{\mu}=-\mu$ (see the
discussion below \eqref{eq:sjump}).

\begin{Remark}
{\rm The fact  that $\tilde{\mu}=-\mu$  is in fact not used for our purpose of calculating  $\mu$}.
\end{Remark}

\begin{Note} There are infinitely many points $x_0$ so that $h_t(x_0)=-4$, and among them there are sequences with modulus going to $\infty$.
\end{Note}

\begin{Proposition}\label{x0j0tt}
One can choose $x_0$ satisfying Assumption \eqref{ASSS1} with $|x_0|$ is sufficiently large, such that the tritronquee solution with $u_0=h_t(x_0)=-4$ satisfies
\begin{equation}\label{s0}
s_0=\frac{8(3+\sqrt{3}i)}{5x_0}+O(x_0^{-3/2})
\end{equation}
and
\begin{multline}\label{jx0}
\frac52 x_0J_0=24 i \pi  k_0-2i \sqrt{3} \ln k_0+
12 \ln \left[(1+i) \left(\sqrt{3}+i\right)\mu^{-1}\right]\\
-2 \left(3^{-\frac12}-i\right) (5 \pi +3 i)-\sqrt{3} i \Big(6\ln 2+3\ln
3+2\ln 5\Big)-3 \ln \tfrac{100}{3}-2 \sqrt{3} i \ln \pi
+O\(\frac{\ln k_0}{k_0}\)
\end{multline}
\end{Proposition}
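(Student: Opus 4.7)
The plan is to use the asymptotic expansion \eqref{eq:eq51} of $h_t$ valid in the strip $\Im x<0$, $\Re x\in[-\tfrac43\ln|x|,0]$, where by Proposition\,\ref{sto} the constant in $\xi=C_-x^{-1/2}e^{-x}$ is $C_-=\mu$. First I would find the leading-order values of $\xi$ at which $H_0(\xi)=-4$: a direct computation on $H_0(\xi)=\xi/(\xi/12-1)^{2}$ gives the quadratic $\xi_0^{2}+12\xi_0+144=0$, so $\xi_0=-6\pm 6\sqrt{3}\,i=12\,e^{\pm 2\pi i/3}$. Fix one such root, say $\xi_0=-6+6\sqrt{3}\,i$. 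The condition $\xi(x_0)=\xi_0+O(1/x_0)$ is equivalent to $-x_0=\ln(\xi_0/\mu)+\tfrac12\ln x_0-2\pi i k$, and for each large positive integer $k_0$ (with $k=-k_0$) a contraction argument produces a unique solution
\[
x_0=\ln(\mu/\xi_0)-\tfrac12\ln x_0-2\pi i k_0+O(1/k_0),
\]
lying close to $-2\pi i k_0$. From here $|x_0|\sim 2\pi k_0$, $\Im x_0<0$, and $|\Re x_0|\sim\tfrac12\ln|x_0|<\ln|x_0|$, giving the first three conditions in \eqref{ASSS1}.

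Next I would compute $s_0=h_t'(x_0)^{2}-h_t(x_0)^{2}-h_t(x_0)^{3}/3$. Since $h_t(x_0)=-4$ exactly, the algebraic part equals $-16+64/3=16/3$. Differentiating \eqref{eq:eq51} via $\xi'=-\xi(1+1/(2x))$ yields $h_t'(x)=-\xi H_0'(\xi)+O(1/x)$, so the leading contribution to $(h_t')^{2}$ is $\xi_0^{2}H_0'(\xi_0)^{2}$. Using the algebraic identities $\xi_0^{2}=-12\xi_0-144$, $(\xi_0-12)^{2}=-36\xi_0$, and $(\xi_0+12)^{2}=12\xi_0$ on $H_0'(\xi)=-144(\xi+12)/(\xi-12)^{3}$ gives $\xi_0 H_0'(\xi_0)=-4i/\sqrt{3}$, hence $\xi_0^{2}H_0'(\xi_0)^{2}=-16/3$, which cancels the $16/3$ and forces $s_0=O(1/x_0)$. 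To extract the $1/x_0$-coefficient I would write $\xi_*=\xi_0+\eta$ with $\eta=-H_1(\xi_0)/(x_0 H_0'(\xi_0))+O(x_0^{-2})$ chosen so that $H_0(\xi_*)+H_1(\xi_*)/x_0=-4$, and expand $h_t'(x_0)$ to first subleading order using \eqref{eq:f11} and the values of $H_0'', H_1, H_1'$ at $\xi_0$. Squaring, the $O(1/x_0)$ terms in $s_0$ assemble into $8(3+\sqrt{3}\,i)/(5x_0)$ after simplification, proving \eqref{s0}. One checks in addition that $|s_0 x_0|=16\sqrt{3}/5\in(1,10)$ and $\Im s_0=24/(10\pi k_0)+O(k_0^{-2})>0$, completing \eqref{ASSS1}.

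For \eqref{jx0} I would feed $s_0$ into the expansion of $J$ near $s=0$ from Lemma\,\ref{eps1}, namely $J(s)=-\tfrac{24}{5}-(\tfrac12+\ln 24-\tfrac{\pi i}{2})s+\tfrac12 s\ln s+O(|s|^{3/2})$, to obtain
\[
\tfrac52 x_0 J_0=-12 x_0-\tfrac{5A}{2}\bigl(\tfrac12+\ln 24-\tfrac{\pi i}{2}\bigr)+\tfrac{5A}{4}\ln(A/x_0)+O(x_0^{-1/2}),
\]
with $A=8(3+\sqrt{3}\,i)/5$, $\tfrac{5A}{2}=12+4\sqrt{3}\,i$, $\tfrac{5A}{4}=6+2\sqrt{3}\,i$. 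Substituting the implicit formula in the form $-12 x_0=12\ln(\xi_0/\mu)+6\ln x_0+24\pi i k_0$, and using the principal-branch expansion $\ln x_0=\ln k_0+\ln(2\pi)-i\pi/2+O(1/k_0)$, the $\ln k_0$-coefficient collapses to $6-(6+2\sqrt{3}\,i)=-2\sqrt{3}\,i$, matching \eqref{jx0}; the $-24\pi i k_0$ in $-12 x_0$ becomes the leading $24\pi i k_0$. Finally the pure constants, after elementary regrouping using $\xi_0=12\,e^{2\pi i/3}$, $(1+i)(\sqrt{3}+i)=2\sqrt{2}\,e^{5\pi i/12}$, $\ln A=\ln(16\sqrt{3}/5)+i\pi/6$, $\ln 24=3\ln 2+\ln 3$, and $100/3=2^{2}\cdot 5^{2}/3$, assemble into the constant block on the right-hand side of \eqref{jx0}.

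The main obstacle is bookkeeping rather than conceptual: the coefficient $-2\sqrt{3}\,i$ of $\ln k_0$ emerges only after a delicate cancellation between the $-\tfrac12\ln x_0$ term arising in the implicit formula for $x_0$ and the $\tfrac{5A}{4}\ln(A/x_0)$ contribution from the $J$-expansion, and the several real and imaginary constants on the right of \eqref{jx0} must be grouped correctly. The choice $u_0=-4$ (cf.\ Note\,\ref{u04}) together with the identities $(\xi_0\mp 12)^{2}=\mp 36\xi_0,\,12\xi_0$ is what makes the leading-order cancellation $s_0=O(1/x_0)$ automatic; the cleanest way to finish is to separate the constants by the powers of $\sqrt{3}\,i$ they carry (coming from $A$ and from $\ln\xi_0$) and isolate the unique $\ln\mu$ contribution, which appears only through the implicit equation defining $x_0$.
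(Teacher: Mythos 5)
Your proposal follows essentially the same route as the paper: solve $H_0(\xi)=-4$ to get $\xi_0=-6+6\sqrt{3}i=12e^{2\pi i/3}$, invert $\xi(x_0)=\xi_0+O(1/x_0)$ (with $C_-=\mu$) to obtain $x_0=-2\pi i k_0-\tfrac12\ln x_0+\ln(\mu/\xi_0)+O(1/k_0)$, compute $s_0$ from \eqref{def_s} using the expansion \eqref{eq:eq51}, and feed the result into the small-$s$ expansion of $J$ from Proposition~\ref{js0} (the paper compresses the $s_0$ step to ``a straightforward calculation''; you supply the useful identities $(\xi_0\mp 12)^2=\mp36\xi_0,\,12\xi_0$ and $\xi_0 H_0'(\xi_0)=-4i/\sqrt{3}$). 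The one minor slip is that $\ln x_0=\ln k_0+\ln(2\pi)-i\pi/2+O(\ln k_0/k_0)$, not $O(1/k_0)$, since $\tilde{x}_0=O(\ln k_0)$; this is harmless because it is absorbed by the $O(\ln k_0/k_0)$ error in \eqref{jx0}, though to reach that accuracy one should note (as the paper implicitly does, cf.\ Lemma~\ref{xjed}) that the direct calculation of $s_0$ actually gives a sharper $O(\ln x_0/x_0^2)$ error than the $O(x_0^{-3/2})$ stated in \eqref{s0}.
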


\begin{proof}
Since $u_0=-4$, \eqref{eq:eq51} implies that ${\xi}(\xi/12-1)^{-2}=-4$ for
$x$ near $-i\mathbb{R}$. This equation has solutions
$\xi=6(-1\pm\sqrt{3}i)$. For convenience we choose $\xi=6(-1+\sqrt{3}i)$. Let
$x_0$ be a value of $x$ corresponding to $\xi$. A straightforward calculation using \eqref{eq:eq51} shows \eqref{s0}.

We write $x_0=-2k_0\pi i+\tilde{x}_0$ where $k_0\in \mathbb{N}$ is large, and
$\tilde{x}_0=O(\ln k_0)$. By definition of $\xi$ we see that $\tilde{x}_0$ solves the equation $$\frac{\mu
  e^{-\tilde{x}_0+\pi i/4}}{\sqrt{2k_0\pi +i\tilde{x}_0}}=6(-1+\sqrt{3}i)$$ Expanding the square root at $\tilde{x}_0=0$ and inverting the
exponential we obtain
\begin{equation}\label{x0}
\tilde{x}_0=-\ln\(6(-1+\sqrt{3}i)\mu^{-1}\sqrt{-2k_0\pi i}\)+O(\frac{1}{k_0})
\end{equation}

Combining \eqref{x0} and Proposition \ref{js0} we obtain \eqref{jx0}.

\end{proof}

Now we have
\begin{Proposition}\label{reach} Let $x_0$ as in {Proposition}\,\ref{x0j0tt}, large enough so that
$(s_0,x_0)\in\mathcal{R}_1$, so that $(s_n,x_n)$ exist for
  $0\Le n\Le N_{m}$. Furthermore, $x_{N_{m}}$ is in the transseries region
  $\{x\in \mathbb{C}:\Re x<0,\Im x \in(-\tfrac43\log |x|,0)\}$ and
  \eqref{eq:eq51n} implies
\begin{equation}\label{snn2}
s_{N_m}=\frac{24}{5x_{0}}+O(x_{0}^{-2}\ln x_{0} )
\end{equation}
\end{Proposition}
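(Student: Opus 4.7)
}

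The plan is to chain four ingredients that are already in place: Proposition \ref{x0j0tt}, Assumption \eqref{ASSS1}, Proposition \ref{Wholeinterval}, and Proposition \ref{c123}, together with the transseries expansion \eqref{eq:eq51n}. The whole argument should be short because each step is almost a bookkeeping exercise on the earlier results.

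First I would verify that the pair $(s_0,x_0)$ produced by Proposition \ref{x0j0tt} satisfies Assumption \eqref{ASSS1} once $|x_0|$ (equivalently $k_0$) is large. Writing $x_0=-2k_0\pi i+\tilde x_0$ with $\tilde x_0=O(\ln k_0)$ by \eqref{x0}, one reads off $\Im x_0<0$ and $|\Re x_0|=O(\ln k_0)<\ln|x_0|$. From \eqref{s0} we get $s_0 x_0=\tfrac{8(3+\sqrt3\,i)}{5}+O(x_0^{-1/2})$, so $|s_0x_0|\to \tfrac{16\sqrt3}{5}\in(1,10)$, and a direct computation of $\Im\bigl(\tfrac{8(3+\sqrt3\,i)}{5 x_0}\bigr)$ for $x_0$ on the negative imaginary axis gives $\Im s_0>0$. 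Hence $(s_0,x_0)\in\mathcal{R}_1$ and Proposition \ref{Wholeinterval} yields existence of $(s_n,x_n)$ for all $0\le n\le N_m$.

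Next I would invoke Proposition \ref{c123}(iii): $x_{N_m}=-ix_0+O(\ln x_0)$ (using also $J_{N_m}=iJ_0+O(x_0^{-1}\ln^2 x_0)$). With $x_0\sim -2k_0\pi i$ this gives $x_{N_m}=-2k_0\pi+O(\ln k_0)$, so $\Re x_{N_m}<0$ and $\Im x_{N_m}=O(\ln k_0)=O(\ln|x_{N_m}|)$, placing $x_{N_m}$ in the strip around the negative real axis in which the expansion \eqref{eq:eq51n} is valid; the exact sign of $\Im x_{N_m}$ is pinned down by using the explicit form of $\tilde x_0$ from \eqref{x0}.

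For \eqref{snn2}, I would use the fact that at each checkpoint the angle variable $u$ has returned to $u_0=-4$, so that $h_t(x_{N_m})=u_0=-4$ and, by Note \ref{equiv} and \eqref{def_R}, $h_t'(x_{N_m})^2=u_0^3/3+u_0^2+s_{N_m}=-\tfrac{16}{3}+s_{N_m}$. Inserting $x=x_{N_m}$ into \eqref{eq:eq51n} and using $h_t(x_{N_m})=-4$ gives $h_0(\tilde\xi_{N_m})=2+O(x_{N_m}^{-1})$, which (together with a branch choice inherited from the path of continuation) pins $\tilde\xi_{N_m}=\tilde\mu e^{ix_{N_m}}$ down modulo $O(x_{N_m}^{-1})$. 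Differentiating the same expansion yields $h_t'(x_{N_m})=-i\tilde\xi_{N_m}h_0'(\tilde\xi_{N_m})+O(x_{N_m}^{-1})$, and squaring gives $s_{N_m}$ via $s_{N_m}=h_t'(x_{N_m})^2+\tfrac{16}{3}$. Expanding to the next order in $1/x_{N_m}$ and using $x_{N_m}=-ix_0+O(\ln x_0)$ produces \eqref{snn2} after a short algebraic simplification.

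The main obstacle is this last step: Proposition \ref{c123}(iii) only controls $\Im x_{N_m}$ to within $O(\ln x_0)$, which is of the same size as the quantity we want to locate, so one has to feed the explicit $\tilde x_0$ from \eqref{x0} (and thereby the trial value of $\mu$) into the argument to determine the correct sign of $\Im x_{N_m}$ and to obtain the constant $24/5$ in \eqref{snn2} at the right order. Once this is done, the match between the discrete-constants formula of Proposition \ref{c123}(i) and the transseries value produced here is precisely the equation for $\mu$ alluded to in \S\ref{Overmu}.
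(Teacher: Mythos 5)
Your overall plan — verify \eqref{ASSS1} for the data of Proposition \ref{x0j0tt}, invoke Proposition \ref{Wholeinterval} for existence, use Proposition \ref{c123}(iii) to place $x_{N_m}$ near $-i x_0$, and then read $s_{N_m}$ off \eqref{eq:eq51n} via the energy relation $s=h'^2-h^2-h^3/3$ evaluated at $h=u_0=-4$ — is the same decomposition the paper uses, and your last step (writing $s_{N_m}=h_t'(x_{N_m})^2+\tfrac{16}{3}$, solving $h_0(\tilde\xi)=2$ for $\tilde\xi_{N_m}$, differentiating the transseries) is a useful explicit spelling-out of what the paper compresses into the phrase ``which implies \eqref{snn2} by \eqref{eq:eq51n}.''

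The genuine gap is exactly the one you flag: pinning down the \emph{sign} of $\Im x_{N_m}$. Your proposed fix — feed the explicit $\tilde x_0$ from \eqref{x0}, hence a ``trial value of $\mu$'', into the location of $x_{N_m}$ — is not how the paper does it, and as a standalone argument it risks circularity, since the entire point of this proposition (together with \eqref{snn}) is to \emph{produce} the equation that determines $\mu$. The paper avoids this by a two-sided squeeze that never uses the numerical value of $\mu$: on the one hand, combining \eqref{xnm1} with Lemma \ref{eps1} (behavior of $J$ near $s=-\tfrac43$), \eqref{sn8}, and \eqref{jx0} it writes
$\Im x_{N_m}=\Im\bigl(\tfrac{5}{48}\,|x_0|\,s_{N_m}^{\,-}\ln s_{N_m}^{\,-}\bigr)+O(1)$,
and the bounds $0<\Im s_{N_m}<11/|x_0|$ and $\Re s_{N_m}^{\,-}=O(1/x_0)$ from Proposition \ref{Wholeinterval} then force the lower bound $\Im x_{N_m}>-\tfrac{55}{48}\ln|x_0|+O(1)$; on the other hand, if $\Im x_{N_m}\geq 0$ then $x_{N_m}$ would lie in the second quadrant, which is a pole-free transseries region for $h_t$ where $h_t=o(1)$ at large $|x|$ (cf. \eqref{sectorinx} and \S\ref{nf}), incompatible with $h_t(x_{N_m})=u_0=-4$. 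Hence $\Im x_{N_m}<0$, without importing the explicit $\tilde x_0$. You should replace your ``main obstacle'' paragraph with this argument; the bookkeeping you already did for the constant $24/5$ does not need $\Im x_{N_m}$ to higher accuracy than $x_{N_m}=-ix_0+O(\ln x_0)$ and so is fine once the sign issue is resolved this way.

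One small further remark: you correctly obtain $h_0(\tilde\xi_{N_m})=2+O(1/x_{N_m})$ and should note that the corresponding roots are $\tilde\xi=12(4\pm\sqrt{15})$, and that $\tilde\xi_{N_m}^2\,h_0'(\tilde\xi_{N_m})^2=\tfrac{20}{3}$ there, giving the leading value $s_{N_m}=-\tfrac43+O(1/x_0)$; the displayed right-hand side $\tfrac{24}{5x_0}$ in \eqref{snn2} should therefore be read as the asymptotics of $s_{N_m}^{\,-}=s_{N_m}+\tfrac43$, consistent with Proposition \ref{Wholeinterval} and with the way \eqref{snn2} is matched against \eqref{snn} in the proof of Proposition \ref{sto}.
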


\begin{proof}
 With $u_0=-4$ and $u_0,x_0,s_0$ given by Proposition \ref{x0j0tt}, the conditions of Proposition \ref{Wholeinterval} are satisfied. By Lemma \ref{eps1}, Proposition \ref{c123} (iii) and \eqref{jx0} we have
$$\Im x_{N_{m}}=\Im \frac{x_0J_0}{J_{N_{m}}}+O(1)=\Im
\frac{5\, |x_0|\,s_{N_{m}}\h,^-\, \ln s_{N_{m}}\h,^-}{48}+O(1)$$ Since $0<\Im s_{N_{m}}<11/|x_0|$ by Proposition \ref{Wholeinterval}, and $\Re s_{N_{m}}\h,^-=O(1/x_0)$ by \eqref{sn8} and \eqref{jx0}, we see that
$\Im x_{N_{m}}>-\frac{55}{48}\ln|x_0|+O(1)$. Since the second quadrant is a transseries region (cf. \eqref{sectorinx} and \cite{cch}) and $u_0=-4$, by \eqref{eq:eq51n} we must have $\Im x_{N_{m}}<0$.

It follows from \eqref{eq:eq51n} that and
$-{\tilde{\xi}}(\tilde{\xi}/12-1)^{-2}=-2$ for $x$ near $-\mathbb{R}$, with
solutions $\tilde{\xi}=12 \left(-4\pm\sqrt{15}\right)$, which implies \eqref{snn2} by \eqref{eq:eq51n}. Note that $x_{N_{m}}=-i x_0+O(\ln x_0)$ by Proposition \ref{c123} (iii).

\end{proof}

\subsection{Calculating the Stokes multiplier}
We now find the exact value of the Stokes multiplier using Proposition
\ref{c123} (i) and \eqref{snn2}.
\begin{Note}\label{method}{\rm
 Eq. \eqref{snn} gives a formula for $s_{N_m}$ based on the constants of motion given in Theorem \ref{recu1} (ii) and (iii), whereas \eqref{snn2} gives the value of $s_{N_m}$ according to the asymptotic expansion  \eqref{eq:eq51n} for the tritronqu\'ee. Thus by setting them equal to each other we establish an equation for $\mu$, see \eqref{stok2} below.}
\end{Note}
We need to prove some estimates first.
\begin{Lemma}\label{xjed}
Let $x_0,s_0$ as in {Proposition}\,\ref{x0j0tt}. For $N_0/2<N<2N_0$ we have
\begin{multline}\label{xjee1}
B_{N}=\frac{2}{5} \bigg(\left(- i+\frac{1}{\sqrt{3}}\right) \pi \\+6+2 i \sqrt{3}+i \sqrt{3} \ln 3 +\ln 27 -6\ln(2 \pi )\bigg)+\frac{4\sqrt{3}i}{5}\ln N+O(x_0^{-1/4}\ln x_0)
\end{multline}
where $B_n$ is as defined in \eqref{defbn}.
Similarly for $N_0/2<m<2N_0$ we have
\begin{equation}\label{xjee2}
\tilde{B}_m=\frac{12}{5} (1-\ln \pi )+O(\frac{1}{m+1})+O(x_0^{-1/4}\ln x_0)
\end{equation}
Equivalently, $g_a=\frac{2}{5} \bigg(\left(- i+\frac{1}{\sqrt{3}}\right) \pi +6+2 i \sqrt{3}+i \sqrt{3} \ln 3 +\ln 27 -6\ln(2 \pi )\bigg)$ and $g_b=\frac{12}{5} (1-\ln \pi )$ ($g_a,g_b$ are as defined in \eqref{eq:defgab}).
\end{Lemma}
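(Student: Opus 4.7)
The plan is to evaluate $\zeta$ and $\tilde\zeta$ explicitly from Propositions \ref{x0j0tt} and \ref{reach}, exploit an algebraic cancellation that trivialises the $\Gamma$-function contributions, and then read off $g_a,g_b$ by elementary algebra, finally extracting $B_N$ and $\tilde B_m$ from the asymptotic forms \eqref{eq:defgab}.

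First I compute $\zeta := 5s_0x_0/48$ using \eqref{s0}:
\[
\zeta = \tfrac{3+\sqrt{3}\,i}{6} + O(x_0^{-1/2}) = \tfrac{1}{\sqrt{3}}\,e^{i\pi/6} + O(x_0^{-1/2}).
\]
The key observation is that $c_0=e^{-i\pi/6}/\sqrt{3}$ is exactly the complex conjugate of the leading value of $\zeta$, so
\[
\zeta + c_0 = \tfrac{2\cos(\pi/6)}{\sqrt{3}} + O(x_0^{-1/2}) = 1 + O(x_0^{-1/2}).
\]
Since $\Gamma(1)=1$, this gives $\ln\Gamma(\zeta+c_0) = O(x_0^{-1/2})$, killing the only nontrivial special-function piece of $g_a$.

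Next I evaluate $-\zeta\ln(\zeta/e)$ directly. With $\ln\zeta = -\tfrac12\ln 3 + i\pi/6$ and $\zeta = \tfrac12 + i/(2\sqrt{3})$, expanding $-\zeta(\ln\zeta - 1)$ yields
\[
-\zeta\ln(\zeta/e) = \tfrac12 + \tfrac{\ln 3}{4} + \tfrac{\pi}{12\sqrt{3}} + i\Big({-\tfrac{\pi}{12}} + \tfrac{1}{2\sqrt{3}} + \tfrac{\ln 3}{4\sqrt{3}}\Big) + O(x_0^{-1/2}).
\]
Adding $-\tfrac12\ln(2\pi)$ from \eqref{eq:defgab} gives $g_a$; a short rearrangement using $\ln 27 = 3\ln 3$ and $1/(2\sqrt{3}) = \sqrt{3}/6$ matches, after multiplication by the prefactor $24/5$ in \eqref{eq:defgab}, the constant term $\tfrac{2}{5}\big[(-i+1/\sqrt{3})\pi+6+2i\sqrt{3}+i\sqrt{3}\ln 3+\ln 27-6\ln(2\pi)\big]$ of \eqref{xjee1}. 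The $\tfrac{4\sqrt{3}\,i}{5}\ln N$ summand is produced by the $\tfrac{i\sqrt{3}}{6}\ln n$ term in \eqref{eq:defgab} under the same multiplication.

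For $\tilde B_m$, Proposition \ref{reach} gives $s_{N_m}^{-}=24/(5x_0)+O(x_0^{-2}\ln x_0)$ and Proposition \ref{c123}(iii) gives $x_{N_m} = -ix_0 + O(\ln x_0)$, whence
\[
\tilde\zeta = \tfrac{5i\,s_{N_m}^{-}\,x_{N_m}}{48} = \tfrac12 + O(x_0^{-1}\ln x_0).
\]
The same cancellation occurs: $\tilde\zeta + \tfrac12 = 1 + O(x_0^{-1}\ln x_0)$, so $\ln\Gamma(\tilde\zeta+\tfrac12) = O(x_0^{-1}\ln x_0)$, and $\tilde\zeta\ln(\tilde\zeta/e) = -\tfrac{1+\ln 2}{2}+O(\ldots)$. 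Substituting into the formula for $g_b$ and using $\ln(2\pi)=\ln 2+\ln\pi$ collapses everything to $(1-\ln\pi)/2$, whence \eqref{xjee2}.

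The main obstacle, if one can call it that, is noticing the identity $\zeta+c_0=1$ (and its analogue $\tilde\zeta+\tfrac12=1$), which reduces the gamma-function contributions to a controllable error; once that is in hand, the rest is bookkeeping. The error $O(x_0^{-1/4}\ln x_0)$ is driven by three sources, all comfortably smaller than the stated bound: the $O(x_0^{-1/2})$ error in $\zeta$ propagating through $\zeta\ln(\zeta/e)$; the Stirling remainder $O(1/N)$ with $N\sim N_0\asymp x_0^{3/4}$, giving $O(x_0^{-3/4})$; and the error of replacing $B_{n-1}$ by its large-$n$ asymptotic form in \eqref{eq:defgab}.
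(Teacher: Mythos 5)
Your route is genuinely different from the paper's. The paper's proof of Lemma \ref{xjed} works entirely through the telescoping-sum representation $B_N=\tfrac{48}{5}\sum_{k=0}^{N-1}(F_{k;a}+G_{k;a})$ from Lemma \ref{valgab}: it finds explicit closed forms $l_n,g_n$ for the summands, observes the sum is telescopic plus $\ln(N!)$, and then applies Stirling. You instead substitute the explicit leading values of $\zeta$ and $\tilde\zeta$ directly into the asymptotic formulas \eqref{eq:defgab} for $g_a,g_b$, which requires only the Stirling step implicit in \eqref{eq:defgab} itself. Your observation that $c_0=\bar{\zeta}+O(x_0^{-1/2})$, so $\zeta+c_0=1+O(x_0^{-1/2})$, and likewise $\tilde\zeta+\tfrac12=1+O(x_0^{-1}\ln x_0)$, killing the $\ln\Gamma$ contributions, is the crux that makes this shorter than the paper's calculation; that cancellation is never made explicit in the paper's proof, which instead sees $\ln(N!)$ emerge from the telescoping. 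The price your route pays is that it leans on Lemma \ref{valgab} (whose proof the paper compresses to one sentence) to identify the sum with the $\Gamma$-function expression \eqref{eq:defbn}, and on the Stirling asymptotics \eqref{eq:defgab} being valid at $n\sim N_0\asymp|x_0|^{3/4}$; both are stated in the paper, so no circularity.

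There is, however, a sign issue you should not paper over. With $\tilde\zeta=\tfrac12+O(x_0^{-1}\ln x_0)$ one has $\tilde\zeta\ln(\tilde\zeta/e)=-\tfrac{1+\ln 2}{2}$ and $\ln\Gamma(\tilde\zeta+\tfrac12)=O(x_0^{-1}\ln x_0)$, so from \eqref{eq:defgab}
\[
g_b=\tilde\zeta\ln\tfrac{\tilde\zeta}{e}-\ln\Gamma\!\left(\tilde\zeta+\tfrac12\right)+\tfrac12\ln(2\pi)
=-\tfrac{1+\ln 2}{2}+\tfrac{\ln 2+\ln\pi}{2}=\tfrac{\ln\pi-1}{2},
\]
which is $-\tfrac{1-\ln\pi}{2}$, the \emph{negative} of what you wrote. (Your $g_a$ arithmetic, by contrast, checks out.) Note also that the paper's own definitions are not internally consistent on a prefactor and a sign here: \eqref{eq:defbn} carries $\tfrac{5}{24}$ while the $\ln N$ coefficient in \eqref{xjee1} forces $\tfrac{24}{5}$, and the telescoping-sum route $\tilde B_m=\tfrac{48i}{5}\sum(\hat l_n+\hat g_n)$ with the paper's $\hat l_n,\hat g_n$ also yields $-\tfrac{12}{5}(1-\ln\pi)$, opposite to \eqref{xjee2}. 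So your stated answer matches the lemma only by committing the same sign slip the lemma itself appears to contain. You should either carry the negative sign explicitly (and note the resulting discrepancy with \eqref{xjee2}), or at minimum acknowledge that the two signs do not reconcile rather than asserting the computation ``collapses'' to $(1-\ln\pi)/2$ — it collapses to $(\ln\pi-1)/2$.
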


\begin{proof}
Since $u_0=-4$ and $\xi=6(-1+\sqrt{3}i)$, we have
$s_0=\frac{8}{5x_0}(3+\sqrt{3}i)+O(\ln x_0/x_0^2)$ by direct calculation. Also recall that $J_{00}=-\frac{12}{5}+\frac{4\sqrt{3}i}{5}$ by \eqref{eq:eqj00}.

With this choice we have the following explicit formulas by direct calculation using the definitions of $F$ and $G$ (cf. Proposition \ref{fg01}):
$$F_{n;a}=l_n+O(\frac{\ln x_0}{x_0})$$
where
\begin{multline}\label{}
l_n=-\frac{3}{4}+\frac{-3-i \sqrt{3}-6 n}{12} \ln\left(3+i \sqrt{3}+6 n\right)
+\frac{9+i \sqrt{3}+6 n}{12}\ln\left(9+i \sqrt{3}+6 n\right)
\end{multline}
and
$$G_{n;a}=g_n+O(\frac{\ln x_0}{x_0})$$
where
$$g_n=\frac{1}{2} \ln \left(\frac{6 n+3+i \sqrt{3}}{6 n+6}\right)$$
Thus, by Theorem \ref{recu1} (ii) we have for $N<2N_0$
$$B_{N}=\frac{48}{5}\sum_{k=0}^{N-1}(l_n+g_n)+O(x_0^{-1/4}\ln x_0)$$
The sum is a telescopic sum plus an explicit sum, and we get
\begin{equation}
  \label{eq:eqsum2}
\sum_{k=0}^{N-1}(l_n+g_n)=\frac{1}{12}(\left(3+i \sqrt{3}+6 N\right)
\ln\left(3+i \sqrt{3}+6 N\right)-6N-6\ln6 N-6\ln(N!))
\end{equation}
Using Stirling's formula  $\ln(n!)=(-1+\ln n) n+\frac{1}{2}
\left(-\ln\left(\frac{1}{n}\right)+\ln(2 \pi )\right)+O(1/n)$ in
\eqref{eq:eqsum2} we get
\begin{multline}
\sum_{k=0}^{N-1}(l_n+g_n)=\frac{1}{24} \left(6+2 i \sqrt{3}+i \sqrt{3} \ln 3 +\ln 27 -6\ln(2 \pi )\right)\\+\frac{(-3 i+\sqrt{3}) \pi}{72} +\frac{i}{4\sqrt{3}}\ln N+O(1/N)
\end{multline}
This shows \eqref{xjee1}.

The proof for \eqref{xjee2} is similar. Straightforward calculations using \eqref{snn2}
show that
$$\tilde{B}_{m}=\frac{48i}{5}\sum_{n=1}^{m-1}(\olhatl_n+\olhatg_n)+O(x_0^{-1/4}\ln x_0)$$
where
$$\olhatl_n=-\frac{i}{4} \Big((2 n+1) \ln (2 n-1)-(2 n+1) \ln (2
n+1)+2\Big);\ \olhatg_n=\frac{i}{2} \left(\ln \left(\frac{i}{2}-i n\right)-\ln n+\frac{i \pi }{2}\right)$$
These can be summed in $n$ explicitly implying \eqref{xjee2}.
\end{proof}

\begin{Lemma}\label{qjqj}
For $t\ne 0$ we have
\begin{multline}\label{qj}
\int_{t}^{-\tfrac43}Q(-4,s)J(s)ds=-\frac{8}{5} \bigg(6 \sqrt{3} i \ln 2+\pi  \left(\sqrt{3}-4 i\right)\\
+6 \ln \left(4-\sqrt{15}\right)+2 \sqrt{3} i \ln 3\bigg)+\frac{8\sqrt{3}i}{5}\ln t+O(t\ln (|t|+1))
\end{multline}
and
\begin{equation}\label{qjh}
\int_{0}^{-\tfrac43}Q(-4,s)\hat{J}(s)ds=-\frac{16\sqrt{3} \pi}{5} +\frac{16}{15}\ln \left(4-\sqrt{15}\right)+\frac{32}{3} \ln \left(4+\sqrt{15}\right)
\end{equation}

\end{Lemma}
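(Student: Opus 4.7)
First, I would establish a closed form for $Q(-4,s)$. Substituting $u=-4$ into \eqref{eq:eqQ} and using \eqref{eq:eqP}, the polynomial $P(u,s)$ evaluates to $P(-4,s)=-\tfrac{4(3s+4)}{5}$, whose $(3s+4)$ factor cancels that in $\rho(s)$; with $R(-4,s)=\sqrt{s-16/3}$ this yields
$$Q(-4,s)=\frac{-4}{3s\sqrt{s-16/3}}.$$
For \eqref{qj}, I would split $J(s)=-\tfrac{24}{5}+[J(s)+\tfrac{24}{5}]$; by Lemma~\ref{eps1} the bracket vanishes like $s\ln s$ at $s=0$, so only the constant piece carries the $\ln t$ singularity. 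The elementary integral $-\tfrac{24}{5}\int_t^{-4/3}Q(-4,s)\,ds$ is evaluated via the substitution $y=\sqrt{16-3s}$, which reduces it to $\int dy/(16-y^2)$ with primitive $\tfrac{1}{8}\ln\tfrac{4+y}{4-y}$; the lower limit $y\to 4$ (as $t\to 0$) produces the asserted $\tfrac{8\sqrt{3}\,i}{5}\ln t$ coefficient, while the upper limit $y=2\sqrt{5}$ at $s=-4/3$ lies past $y=4$ and therefore picks up an $i\pi$ together with $2\ln(\sqrt{5}+2)$ from the analytic continuation, yielding the $\sqrt{3}\,\pi$ term in \eqref{qj} and part of the finite logarithmic contribution.

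\textbf{The residual piece and \eqref{qjh}.} The remaining definite quantity $\int_0^{-4/3}Q(-4,s)[J(s)+24/5]\,ds$ (and similarly \eqref{qjh}, which has no $s=0$ singularity because $\hat J(0)=0$ by \eqref{eq:defJh}) I would compute by two successive integrations by parts driven by the relations $J'=L/2$ and $L'=-\rho J/2$ from \eqref{eq:id3}. Writing $V(s)$ for the elementary primitive of $Q(-4,s)$ obtained in the first step,
$$\int Q(-4,s)J(s)\,ds = V(s)J(s)-\tfrac{1}{2}\int V(s)L(s)\,ds,$$
and a second integration by parts with a primitive $V_2$ of $V$ (still elementary in $y$, being a logarithm of a rational function) reduces the problem to boundary contributions at $s=0$ and $s=-4/3$. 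At those points the necessary expansions of $J,\hat J,L,\hat L$ are furnished by Lemma~\ref{eps1} and by \eqref{js43}, \eqref{ls43}, \eqref{eq:t21a}: $J(-4/3)=\hat J(-4/3)=-24i/5$, while $L$ has the logarithmic singularities $\tfrac{1}{2}\ln s-\ln 24+\tfrac{\pi i}{2}$ at $s=0$ and $-i\ln(s+4/3)+\cdots$ at $s=-4/3$. Combining these boundary values with the value of $V$ at $y=2\sqrt{5}$ produces the remaining constants $\ln 2$, $\ln 3$, $\ln(\sqrt{5}+2)$, and, via the identity $\mathrm{arccosh}\,4=\ln(4+\sqrt{15})$, the asserted $\ln(4\pm\sqrt{15})$ terms with the precise coefficients of \eqref{qj} and \eqref{qjh}.

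\textbf{Main obstacle.} The principal technical difficulty is tracking branches of $\sqrt{s-16/3}$ and of the various logarithms as $y$ crosses $4$ (i.e.\ as $s$ passes through $0$), and in particular confirming that the second integration by parts genuinely yields an elementary integral rather than another transcendental one; this requires checking that $V_2(s)\,\rho(s)$ remains rational in $y$, which follows because $\rho=5/[3s(3s+4)]$ is rational in $y^2$ and $V_2$ is log-polynomial in $y$. A cleaner alternative, which I would pursue if the direct by-parts route produced unmanageable boundary terms, is to swap the order of integration using the representation $J(s)=\oint_{\mathcal{C}}R(v,s)\,dv$ from Proposition~\ref{js0}(i); after the $y$-substitution the inner integral becomes $\int\sqrt{A(v)-y^2}/(16-y^2)\,dy$ with $A(v)=v^3+3v^2+16$, a classical elementary integral, and the outer $v$-integration over $\mathcal{C}$ is then completed by residues and the explicit values of the analogous integrals at the branch points of $R(v,s)$.
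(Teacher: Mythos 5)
Your preliminary steps are sound and match the paper's: $Q(-4,s)=\tfrac{-4}{3s\sqrt{s-16/3}}$ is correct, and isolating the constant $J(0)=-\tfrac{24}{5}$ to extract the $\tfrac{8\sqrt3\,i}{5}\ln t$ term is the same mechanism the paper uses (it subtracts $\tfrac{iJ(0)}{\sqrt3\,s}$ before taking $t\to0$, cf.\ \eqref{qj2}). However, your primary computational route — two integrations by parts driven by $J'=L/2$ and $L'=-\tfrac12\rho J$ — does not close. After the two by-parts one is left with
\begin{equation*}
\int_t^{-4/3}Q(-4,s)J\,ds \;=\; \Big[\,VJ-\tfrac12 V_2L\,\Big]_t^{-4/3}\;-\;\frac14\int_t^{-4/3}V_2\,\rho\,J\,ds,
\end{equation*}
and the residual integral $\int V_2\rho J\,ds$ still contains the non-elementary factor $J$; it is of the same transcendental type as the original. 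The process terminates only if $V_2\rho$ is a constant multiple of $Q(-4,s)$, which it is not: $V_2$ is genuinely log-polynomial in your variable $y$ (so $V_2\rho$ is not even rational — your ``main obstacle'' paragraph asserts the opposite, which is a slip, since $V_2$ carries the logarithms produced by the first primitive). And even if $V_2\rho$ were rational, $\int(\text{rational})\cdot J\,ds$ would not be elementary, because $J$ is a (degenerate) hypergeometric function.

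Your stated fallback is the correct route, and it is precisely what the paper does: insert the loop-integral representation $J(s)=\oint R(v,s)\,dv$ from Proposition~\ref{js0}(i), swap the order of integration, evaluate the now-elementary inner $s$-integral (the paper's \eqref{uu}), and finish with the elementary outer $v$-integral. Two technical refinements are needed to make the swap legitimate, both handled by the paper in \eqref{qj2}--\eqref{final}: (a) one must subtract the singular contribution $\tfrac{i\sqrt{U}}{\sqrt3\,s}$ inside the double integral, so the inner integral converges uniformly over the contour; and (b) the branch points of $R(\cdot,s)$ move across any fixed contour as $s$ runs through $(t,-4/3)$, so the paper regularizes with $U(\epsilon)=u^3/3+u^2+\epsilon i$, uses the fixed contours $\mathcal C_0$ (resp.\ $\hat{\mathcal C}_0$ for $\hat J$) adapted to the $\epsilon$-shifted branch points, and takes $\epsilon\to 0$ at the end. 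With these adjustments the fallback gives the Lemma; the integration-by-parts route should be abandoned.
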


\begin{proof} The proofs of \eqref{qj} and \eqref{qjh} are very similar.
We have by definition and Lemma \ref{eps1}
\begin{multline}\label{qj2}
\int_{t}^{-\tfrac43}Q(-4,s)J(s)ds=\int_{t}^{-\tfrac43}\frac{-4J(s)}{s \sqrt{-48+9 s}}ds\\
=\int_{0}^{-\tfrac43}\(\frac{-4J(s)}{s \sqrt{-48+9 s}}-\frac{i J(0)}{\sqrt{3}s}\)ds+\frac{i J(0)\ln s}{\sqrt{3}}\big|_{t}^{-\tfrac43}+O(t\ln (|t|+1))\\
=\lim_{\epsilon\to 0}\int_{\mathcal{C}_0}\int_{t}^{-\tfrac43}\(\frac{-4\sqrt{U(\epsilon)+s}}{s \sqrt{-48+9 s}}-\frac{i \sqrt{U(\epsilon)}}{\sqrt{3}s}\)ds\,du+\frac{i J(0)\ln s}{\sqrt{3}}\big|_{t}^{-\tfrac43}+O(t\ln (|t|+1))
\end{multline}
where $U(\epsilon)=u^3/3+u^2+\epsilon i$, and $\mathcal{C}_0$ is as in Corollary \ref{lem2}. In particular it surrounds $-3-\epsilon i/3$, $-2-\sqrt{\epsilon i}$, $\sqrt{-\epsilon i}$, and $1-\epsilon i/3$ but neither $-\sqrt{-\epsilon i}$ nor $-2+\sqrt{\epsilon i}$.

Similarly
\begin{equation}\label{qjh2}
\int_{0}^{-\tfrac43}Q(-4,s)\hat{J}(s)ds=\lim_{\epsilon\to 0}\int_{\hat{\mathcal{C}}_0}\int_{0}^{-\tfrac43}\(\frac{-4\sqrt{U(\epsilon)+s}}{s \sqrt{-48+9 s}}-\frac{i \sqrt{U(\epsilon)}}{\sqrt{3}s}\)ds\,du
\end{equation}
where $\hat{\mathcal{C}}_0$ is as in Corollary \ref{lem2}. In particular it surrounds $-2+\sqrt{\epsilon i}$, $\pm\sqrt{-\epsilon i}$, and $1$, but neither $-3$ nor $-2-\sqrt{\epsilon i}$.

Elementary integration gives
\begin{multline}\label{uu}
\int_{0}^{-\tfrac43}\(\frac{-4\sqrt{U+s}}{s \sqrt{-48+9 s}}-\frac{i \sqrt{U}}{\sqrt{3}s}\)ds=\frac{1}{3} \bigg(\sqrt{3} \pi  \sqrt{U}+4 \ln\left(16-8 i \sqrt{3} \sqrt{U}-3 U\right)\\-4 \ln\left(24-3 U-4 i \sqrt{5} \sqrt{-4+3 U}\right)
+i \sqrt{3} \sqrt{U} \bigg(\ln 48 +\ln U \\-2 \ln(16+3 U)+\ln\left(16-27 U+4 \sqrt{15} \sqrt{U (-4+3 U)}\right)\bigg)\bigg)
\end{multline}
This function  can be integrated in $u$ explicitly as well; the calculation
is tedious but straightforward and we omit the details. The branches of
$\ln$ and square roots are chosen according to analytic continuations along the contour $\mathcal{C}_0$ or
$\hat{\mathcal{C}}_0$ where the initial branch is consistent with $J$ or $\hat{J}$. Integrating \eqref{uu} along $\mathcal{C}_0$ we obtain
\begin{multline}\label{final}
\int_{0}^{-\tfrac43}\(\frac{-4J(s)}{s \sqrt{-48+9 s}}-\frac{i J(0)}{\sqrt{3}s}\)ds\\
=-\frac{8}{5} \left(4 i \sqrt{3} \ln 2+2 \left(-2 i+\sqrt{3}\right) \pi + 6 \ln\left(4-\sqrt{15}\right)+3 i \sqrt{3} \ln 3\right)
\end{multline}
which together with \eqref{qj2} implies \eqref{qj}. Similarly integrating \eqref{uu} along $\hat{\mathcal{C}}_0$ and using \eqref{qjh2} we obtain \eqref{qjh}.
\end{proof}

Now we calculate the Stokes multiplier $\mu$ rigorously using Proposition \ref{c123} (i).

\begin{proof}[Proof of Proposition \ref{sto}]
We apply Proposition \ref{c123} (i) by first noting that
\begin{equation}\label{com22}
\phi_{N_m}=\frac{g_a}{2\pi}+\frac{1}{4\pi i}\int_{s_{0}}^{-\tfrac43}Q(u_0,s)(J(s)-\hat{J}(s))ds+O(x_0^{3/4} \ln x_0)
-\frac{2\sqrt{3}}{5\pi}\ln\frac{3+\sqrt{3}i}{6}
\end{equation}
since $N_m=\frac{|x_0|}{2\pi}+O(\ln x_0)$ by Proposition \ref{c123} and $s_0x_0=\frac{8(3+\sqrt{3}i)}{5}+O(x_0^{-1/2})$ by \eqref{s0}.

Now it follows from Lemma \ref{qjqj}
\begin{multline}\label{eq:eqct2}
 \frac{1}{4\pi i}\int_{s_{0}}^{-\tfrac43}Q(u_0,s)(J(s)-\hat{J}(s))ds=
  \frac{1}{4\pi i}\int_{s_{0}}^{-\tfrac43}Q(u_0,s)J(s)ds\\-
  \frac{1}{4\pi i}\int_{0}^{-\tfrac43}Q(u_0,s)\hat{J}(s)ds+O(x_0^{-1}\ln x_0)
 =\frac{2}{5} \left(\frac{\sqrt{3} \log \left(s_0/576\right)}{\pi }-i \sqrt{3}+4\right) +O(x_0^{-1}\ln x_0)\\
 =-\frac{2 \sqrt{3} \log |x_0|}{5 \pi }+\frac{8}{5}-\frac{2 i}{5 \sqrt{3}}-\frac{\sqrt{3} \log (10800)}{5 \pi }+O(x_0^{-1})
\end{multline}
Applying \eqref{xjee1} and \eqref{eq:eqct2} to \eqref{com22} we obtain
\begin{multline}\label{dsum}
\phi_{N_m}=-\frac{i}{5 \pi } \bigg(\pi  \left(\sqrt{3}+8 i\right)-2 i \sqrt{3}-6-6 i \sqrt{3} \ln 2-4 i \sqrt{3} \ln 3-2 i \sqrt{3} \ln 5\\
-2 \ln 27+2 \sqrt{3} i \ln \left(3+i \sqrt{3}\right)+6 \ln \left(3+i \sqrt{3}\right)+6 \ln (\pi )-2 i \sqrt{3} \ln|x_0|\bigg)+O(x_0^{3/4}\ln x_0)
\end{multline}
By \eqref{snn}, \eqref{jx0}, \eqref{dsum} and   Proposition \ref{reach},
the matching equation \eqref{hpm} implies
\begin{multline}\label{stok2}
-\frac{4 \left(\sqrt{3}-6 i\right)}{5 \pi}=\frac{24 i }{5 \pi}(k_1-N_m)+\frac{1}{5 \pi ^2}\bigg(12 \ln \left(\frac{(6+6 i) \left(\sqrt{3}+i\right)}{\mu}\right)+i \pi -4 \sqrt{3} \pi -24 \ln 2\\
-\ln 729-6 \ln (5 \pi )\bigg)+O(x_0^{-1/4}\ln
x_0) \Rightarrow \mu=e^{2(k_1-N_m)\pi i}\sqrt{\frac{6}{5 \pi }}i=\sqrt{\frac{6}{5 \pi }}i
\end{multline}
since $k_1-N_m\in \mathbb{Z}$.
\end{proof}
\subsection{The Painlev\'e equation P$_2$}\label{secp2} The normal form of
$P_2$ is (\cite{invent})
\begin{align}
  \label{eq:p2n0}
  h''+\frac{h'}{t}-\left(1+\frac{24\alpha^2+1}{9t^2}\right)h-\frac{8}{9}h^3+
\frac{8\alpha}{3t}h^2+\frac{8(\alpha^3-\alpha)}{9t^3}=0
\end{align}
The associated asymptotic Hamiltonian equation, with Hamiltonian $s$  is
$s''-s-\frac89s^3=0$. With $R=\sqrt{9u^2+4u^4+18s(u)}$,  we have (cf. \cite{cch} (37) and (38))
\begin{align}\label{syst22}
  &\frac{ds}{du}=-\frac{8\alpha u^2+R}{3x}+\frac{u(1+24\alpha^2)}{9x^2}+\frac{8(\alpha-\alpha^3)}{9x^3}\\
&\frac{dx}{du}=\frac{3}{R}\label{syst32}
\end{align}
We integrating in $u$  along cycle $\mathcal{C}$ surrounding two or three
singularities, and use the notation \eqref{eq:eqJLu}. Since  $u^2$ is
single-valued  we get
\begin{equation}
  \label{eq:p2ev}
  s_{n+1}-s_n=-\frac{J_n}{3x_n}+O(x_n^{-2});\ \ x_{n+1}=x_n+3L_n
\end{equation}
the same as the case for P$_1$ except for the fact that $R^2$ is now
quartic. The leading order constants of motion are of the same form as those
for P$_1$. We leave this analysis for a different paper.

\section{Appendix}\label{appendix}
\begin{proof}[Proof of Lemma\,\ref{lem1}]
  (i) Analyticity of the roots of a polynomial in $\CC\setminus S_1$ where
  $S_1$ is the finite set of points where the roots coallesce  is standard
  \cite{Ahlfors}; here $S_1=
  \{0,-4/3\}$.  As for the behavior near $S_1$, because of the symmetry \eqref{sym_T}, it suffices to  analyze the roots near $0$.

  We write $\epsilon^2=-s$ and rewrite the equation as $v\sqrt{1+v/3}=\sigma
  \epsilon$, $\sigma=\pm 1$. By symmetry, it is enough to analyze the
  case $\sigma=1$, $v\sqrt{1+v/3}=\epsilon$. We choose the branch of the
  square root with the cut $(-\infty,-3]$. The implicit function theorem (IFT)
  applies at $(v,\epsilon)=(0,0)$ and gives a root, ${\ro}_2(\epsilon)$ which is analytic on the universal covering of $\CC\setminus
  S_1$. Consider the domain $S_2:=\{\epsilon:|\epsilon|<2/3\}$. If $|{\ro}_2|=1$
  have $|{\ro}_2^2||1+{\ro}_2/3|> 1-1/3=2/3$ and, by analyticity and the fact that
  ${\ro}_2(0)=0$ we see that $|{\ro}_2|<1$ for $\epsilon\in S_2$. By our choice of branch, we
  thus have throughout $S_2$,
  \begin{equation}
    \label{eq:bd1}
    |{\ro}_2|<1;\ \ \text{and} \ \ \Re\sqrt{1+r/3}>0
  \end{equation}
Using \eqref{eq:bd1}, we see that
\begin{equation}
  \label{eq:estr2}
  |{\ro}_2-\epsilon|=\left|\frac{\epsilon
      {\ro}_2}{1+\frac{{\ro}_2}{3}+\sqrt{1+\frac{{\ro}_2}{3}}}\right|\Le \frac{|\epsilon|}{2/3+\sqrt{2/3}}\Le\frac{\epsilon}{4}
\end{equation}
Estimating the right side of the equality in \eqref{eq:estr2}, now relying on
$|{\ro}_2-\epsilon|\Le\frac{1}{4}\epsilon$ we get that $|{\ro}_2-\epsilon|\Le |\epsilon^2|/3$.
The result about ${\ro}_1$ follows similarly.

Using the symmetry \eqref{sym_T}, for $|\sm|<\sqrt{2/3}$, {\em in some
  labeling}, the three roots $\tilde{r}_i$ of $P$ satisfy
\begin{equation}
  \label{eq:eqrt}
 |\rho_{1}-\sqrt{\sm}|<|\sm|;\
 |\rho_3-3+\sm/3|<|\sm^{\!\!\!\!2}|;  |\rho_{2}-\sqrt{\sm}|<|\sm|;\ \end{equation}
where $\rho_i=2+\tilde{r}_i$, $\sm=4/3+s$.

(ii) We let $s$ traverse a region  in $\mathbb{H}$.

We note that the roots  do not cross
$\RR$, otherwise we would have $s=-r_j^2-r_j^3/3\in\RR$. As a consequence  of
this and by analyticity $\Im r_j, j=1,2,3$ do not change sign.
  For small $s$,
${\ro}_2\in \HH$, thus ${\ro}_2\in\HH$ for all $s\in\HH$.

In \eqref{eq:eqrt},
$\tilde{\ro}_2$ is the only root in $\HH$, thus $\tilde{\ro}_2={\ro}_2$. Similarly,
since $r_{{\Tco}}\in -\HH$ for small $s$, $r_{{\Tco}}\in -\HH$ for all $s$.

Letting $u=-ti$ where $t\in \mathbb{R}^+$ we see that $u^3/3+u^2+s=s+\frac{i
  t^3}{3}-t^2\ne 0$ since its imaginary part is positive. Similarly letting
$u=-2-ti$ we have $u^3/3+u^2+s=s+\frac{i t^3}{3}+t^2+\frac{4}{3}\ne 0$. Thus
neither ${\ro}_3(s)$ nor ${\ro}_1(s)$ crosses the line $\Re z=0$ or $\Re z=-2$.  This together with
\eqref{eq:eqr} shows that $\Re {\ro}_3(s)>0$ and $\Re {\ro}_1(s)<-2$ for all
$s\in\HH$. Comparing \eqref{eq:eqr} we see that $\tilde{\ro}_3={\ro}_3$ and
$\tilde{\ro}_1={\ro}_1$. Finally, for  small $s\in\HH$, by \eqref{eq:eqr},
 ${\ro}_2$ is between
 $l_1=\{t(1+i): t\Ge 0\}$ and $l_2=\{-2+t(-1+i): t\Ge
0\}$. On the other hand,  for $s\in\HH$, $\Im
P_s(t(2+3i))=\Im[s-\left(\frac{46}{3}-3 i\right) t^3-(5-12 i) t^2]>0$ and
similarly, $\Im P_s(-2+t(-2+3i))>0$. Thus ${\ro}_2$ stays in  between the two rays $l_{1,2}$ for all $s\in\HH$.

(iii) Real analyticity in $t$ follows again from the IFT. Near $t=0$, the IFT
applied to the equations  $v\pm (-tv^3/3-s)^{1/2}=0$ at $r=\mp\sqrt{-s},t=0$
implies the existence of two roots of $tv^3/3+v^2+s$ analytic in $t$. When
$s$ is close to $-4/3$ the result follows from the symmetry $T$.

(iv) This simply follows from the fact that  for $|u|\Ge 399/100$ we have $|u^3/3+u^2|\Ge|(399/100)^3/3-(399/100)^2>21/4$. \end{proof}

\subsubsection{Estimates of sums of square roots}\label{EsSumsro}
Let $\eta_1$ be given by Lemma\,\ref{eps1} and $r_j$ as in \S\ref{roots_prop}.
By Lemma \ref{lem1} and Corollary \ref{lem2} (iii) we
have
$$\inf_{u\in\mathcal{C},\,s\in\newD6p\,{\rm{with}}\, |s|>\frac12\eta_1,|\sm|>\frac12\eta_1} \ |u-r_j(s)| \, >\, 0$$
and thus
\begin{equation}
  \label{eq:eta3}
  \eta_3:=\ \ \ \inf_{u\in\mathcal{C},\, s\in\newD6p \,{\rm{with}}\, |s|>\frac12\eta_1,|\sm|>\frac12\eta_1}\ |u^3/3+u^2+s|\, >\, 0.
\end{equation}

In the sequel, we will need estimates for  functions of the form
$\psi(u):=\psi(u;\sigma_{1,2,3})$ where
\begin{equation}\label{eq97}
\psi(u)= \sqrt{\sigma_1 u^3/3+u^2+s+\tilde{f}_{1}(u)}+\sigma_3\sqrt{\sigma_2 u^3/3+u^2+s+\tilde{f}_{2}(u)}=:\Phi_1+\Phi_2
\end{equation}
with $\sigma_{k}\in\{0,1\}$, $s+\tilde{f}_{1,2}\in \newD6p$. Here, $\tilde{f}_{1,2}$ are small
perturbations in the sense
\begin{equation}
  \label{eq:eqf12}
\sup_{u\in \mathcal{C}}|\tilde{f}_{1,2}(u)|\leqslant\eta_3/2\ \ \ \ { and } \ \ \  \ \sup_{u\in \oldCa}|s+\tilde{f}_{1,2}(u)|>|s|/10
\end{equation}

\begin{Note}\label{Def10} Upon analytic continuation in $s$ from $s\in(-4/3,0)$ to $s>0$ through $\HH$ the branches specified in Proposition\,\ref{js0}(i) give that $P_s(0)=s$ which for $s>0$ has zero argument, hence in $R(0,s)$ we choose the usual branch of
    the square root (which is positive when the argument is in $\RR^+$).
    \end{Note}

 In the following, $f$ is either $\tilde{f}_1$ or $\tilde{f}_2$.
   Since
    $f+s\in\HH$, by Lemma \ref{lem1}, none of the square roots vanishes
    on $\mathcal{C}$.  We analytically continue $\psi$ on $\mathcal{C}$ from $0$ to $u_0$\ \footnote{Recall that we use here $u_0=-4$, but the results are more general.}
clockwise and anticlockwise. This, of course, may result in a discontinuity
at $u_0$.
\begin{Lemma}\label{m0}
(i) If $\sigma_1=\sigma_2=\sigma_3=1$, then  $$\inf_{|s|\Ge\eta_1, |\sm|\Ge\eta_1,u\in\mathcal{C}}|\psi(u)|>0$$
(ii) If $|s|<\eta_1$ then, for all choices of $\sigma_i$ we
have
\begin{equation}\label{equmin}
\inf_{u\in\mathcal{C}}\left|\psi(u)(|u|^2+|s|)^{-1/2}\right|>0
\end{equation}
(iii) Let $\Phi_{10}$ and $\Phi_{20}$ the expressions defined in \eqref{eq97} with
$\sigma_1=\sigma_2=0$ and $\tilde{f}_i(u)$ replaced by $\tilde{f}_i(0)$. If in addition $\tilde{f}_k$ satisfy
 $|\tilde{f}_k(v)-\tilde{f}_k(0)|\lesssim |vs|+|s|^{3/2}$, then
$$\sup_{u\in\mathcal{C},|s|<\eta_1}\left|\frac{1}{\psi(u)}-
\frac{1}{\Phi_{10}+\Phi_{20}}\right|
\lesssim 1 $$

 (iv)  Let $\um,\sm$ as in \eqref{sym_T}. Similar statements hold for
$${\psi}^-(u):=\sqrt{\sigma_1
(\um)^3/3-(\um)^2+\sm+\tilde{f}_1(u)}+\sigma_3\sqrt{\sigma_2
(\um)^3/3-(\um)^2+\sm+\tilde{f}_2(u)}=:{\Phi}_{1}^-+{\Phi}_{2}^-$$
where $\tilde{f}_{1,2}$ satisfies \eqref{eq:eqf12} with $\oldCa$ replaced by $\oldCb$, and $s$ by $\sm$. To be precise we have

(ii') If $|\sm|<\eta_1$ then, for all choices of $\sigma_i$ we
have $$\inf_{u\in\mathcal{C}}\left|\psi^-(u)(|\um |^2+|\sm|)^{-1/2}\right|>0$$

(iii') Let ${\Phi}_{10}^-$ and ${\Phi}_{20}^-$ the expressions defined in \eqref{eq97} with
$\sigma_1=\sigma_2=0$ and $\tilde{f}_i(u)$ replaced by $\tilde{f}_i(-2)$. If in addition $\tilde{f}_k$ satisfy
 $|\tilde{f}_k(u)-\tilde{f}_k(-2)|\lesssim |\um\sm|+|\sm|^{3/2}$, then
$$\sup_{u\in\mathcal{C},|\sm|<\eta_1}\left|\frac{1}{\psi^-(u)}-
\frac{1}{{\Phi}_{10}^-+{\Phi}_{20}^-}\right|
\lesssim 1 $$

\end{Lemma}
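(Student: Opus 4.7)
My plan is to handle the four parts separately, reducing each to the geometric estimates for $R(u,s)=\sqrt{u^3/3+u^2+s}$ from Lemma \ref{lem1} and Corollary \ref{lem2}, together with a careful tracking of the branches of the two square roots in $\psi(u)$ as they are analytically continued along $\mathcal{C}$ from the initial branch fixed by Note \ref{Def10}. For \emph{(i)}, with $\sigma_1=\sigma_2=\sigma_3=1$ and $|s|,|\sm|\geq\eta_1$, the definition \eqref{eq:eta3} yields $|u^3/3+u^2+s|\geq\eta_3$ on $\mathcal{C}$, and since $|\tilde{f}_k|\leq\eta_3/2$ each perturbed radicand $A_k=u^3/3+u^2+s+\tilde{f}_k(u)$ satisfies $|A_k|\geq\eta_3/2$. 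Each $\sqrt{A_k}$ is then a well-defined continuous branch, close to the unperturbed $R(u,s)$. A cancellation $\psi(u)=0$ would force $\sqrt{A_1}=-\sqrt{A_2}$, hence $A_1=A_2$ and then $A_k=0$, contradicting $|A_k|\geq\eta_3/2$; the uniform positive lower bound follows by continuity over the compact parameter set.

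For \emph{(ii)}, the key geometric input is that on the sub-segment of $\mathcal{C}$ through the origin (direction $1+i$), $u=t(1+i)$ gives $u^2=2it^2\in i\overline{\RR_+}$, while $s+\tilde{f}_k(u)\in\newD6p\subset\HH$ has strictly positive imaginary part; combined with \eqref{eq:eqf12}, which prevents $\tilde{f}_k$ from cancelling $s$, one deduces $|u^2+s+\tilde{f}_k(u)|\gtrsim |u|^2+|s|$. The cubic correction $\sigma_k u^3/3$ is absorbable since $|u|$ is bounded on $\mathcal{C}$. Away from the origin sub-segment $|u|$ is bounded below by a positive constant and each square root is of order $|u|$; the branch analysis of (i) ensures the two terms add rather than cancel, yielding $|\psi(u)|\gtrsim\sqrt{|u|^2+|s|}$. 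For \emph{(iii)}, I would apply the telescoping identities
\[
\frac{1}{\psi}-\frac{1}{\Phi_{10}+\Phi_{20}}=\frac{(\Phi_{10}+\Phi_{20})-\psi}{\psi(\Phi_{10}+\Phi_{20})},\qquad \sqrt{A}-\sqrt{B}=\frac{A-B}{\sqrt{A}+\sqrt{B}}.
\]
For each $k$ the difference of radicands is $A-B=-\sigma_k u^3/3+\tilde{f}_k(0)-\tilde{f}_k(u)=O(|u|^3+|us|+|s|^{3/2})$ by the newly imposed bound, while $\sqrt{A}+\sqrt{B}\gtrsim\sqrt{|u|^2+|s|}$ and $|\psi(\Phi_{10}+\Phi_{20})|\gtrsim |u|^2+|s|$ by (ii). Combining, the difference of reciprocals is bounded by $(|u|^3+|us|+|s|^{3/2})/(|u|^2+|s|)^{3/2}$, which is $O(1)$ by a two-case analysis on $|u|\lessgtr\sqrt{|s|}$.

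For \emph{(iv)}, the symmetry \eqref{sym_T}, $(u,s)\mapsto(-\um,-\sm)$, carries the segment $\oldCa$ to $\oldCb$ and the neighborhood of $(0,0)$ in $(u,s)$-space to that of $(-2,-4/3)$; under this substitution $\psi$ becomes $\psi^-$ and the perturbation hypothesis transfers, so (ii) and (iii) directly produce (ii') and (iii'). The main obstacle I anticipate is the branch-tracking in (i) and (ii): the two square roots are analytically continued along $\mathcal{C}$ from (slightly) different initial values, and I must verify that neither continuation crosses a branch cut of $R(u,s)$, so the two terms add rather than cancel. This is precisely what the uniform bound $|\tilde{f}_k|\leq\eta_3/2$ is designed to prevent --- the perturbed radicand stays in a strict $\eta_3/2$-neighborhood of the unperturbed one throughout the continuation --- but turning this picture into rigorous uniform estimates will require some care.
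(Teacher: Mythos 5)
Your overall plan tracks the paper's proof closely—the first-quadrant observation on $\oldCa$ for (ii), the telescoping identity and the two-case bound $(|u|^3+|us|+|s|^{3/2})/(|u|^2+|s|)^{3/2}=O(1)$ for (iii), and the symmetry \eqref{sym_T} for (iv) are all exactly the paper's moves. But there is a genuine gap in your argument for (i). You claim that $\psi(u)=0$ forces $\sqrt{A_1}=-\sqrt{A_2}$, hence $A_1=A_2$, and \emph{then} $A_k=0$. That last implication is false as stated: two equal, nonzero complex numbers can perfectly well have square roots of opposite sign when the branches are determined by two separate analytic continuations along $\mathcal{C}$. The contradiction you want is available only once you have established that the two branches stay mutually consistent along the whole contour—and you explicitly defer exactly that ("the main obstacle I anticipate is the branch-tracking\dots{} will require some care"). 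In other words, the one step your argument actually needs is the one you postpone.

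The paper resolves it directly by controlling the ratio rather than the difference: with $\lambda(u)=(\tilde f_1-\tilde f_2)/\Phi_2^2$ one has $\Phi_1^2/\Phi_2^2=1+\lambda$ with $\sup_{\mathcal C}|\lambda|<1$ (by \eqref{eq:eta3} and \eqref{eq:eqf12}), and the branch fixed by Note \ref{Def10} makes $\Phi_1/\Phi_2=(1+\lambda)^{1/2}$ lie in the open right half plane throughout $\mathcal C$; consequently $1+\Phi_1/\Phi_2$ never vanishes, so neither does $\Phi_1+\Phi_2=\Phi_2\bigl(1+(1+\lambda)^{1/2}\bigr)$. This ratio argument is also what you should invoke (in place of the words "branch analysis of (i)") for the part of (ii) away from the origin sub-segment of $\mathcal C$, where the radicands are bounded below by $\eta_3/2$ even though $|s|<\eta_1$: again the ratio stays near $1$, so the two square roots cannot cancel. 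With that single repair your proposal matches the paper's proof.
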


\begin{proof}
(i) Note that  $\Phi_1^2/\Phi_2^2=1+\lambda(u)$ where
$\lambda(u)=(\tilde{f}_1-\tilde{f}_2)/\Phi_2^2$. By \eqref{eq:eta3} and \eqref{eq:eqf12} we have
$\sup_{u\in\mathcal{C}}|\lambda|=a_1<1$. By the choice of branches, see
Note \ref{Def10}, we have
$\Phi_1+\Phi_2=\Phi_2(1+\lambda)^{\frac12}$ and thus $\Phi_1+\Phi_2$ can only vanish if $\Phi_2$
does, and this is ruled out by \eqref{eq:eta3}.

(ii) For $u\in \oldCa$, by the choice of branch, $\Phi_{1,2}$ are in the
first quadrant. Then, $|\Phi_1+\Phi_2|\Ge \min\{|\Phi_1|,|\Phi_2|\}$, so we can reduce
the analysis to the case $\sigma_3=0$. If $\sigma_1=\sigma_3=0$, then the
estimate follows from the fact that $|u^2|+|s+f|\Le 2|u^2|+2|s|$. If
$\sigma_1=1$ and $\sigma_3=0$  the proof
is similar on $\oldCa$, where  $|u^3/3+u^2|=|u|^2|1+u/3|$ and $|u/3|<\tfrac4{25}$. On the rest of $\mathcal{C}$ we have, using Lemma \ref{lem1} and
Note\,\ref{Def10},  $\sqrt{\tfrac13u^3+u^2+s+f}$ and $\sqrt{u^2+s+f}$ are
the analytic continuations of $u\sqrt{1+\tfrac{u}3}\sqrt{1+g_1(u,s)}$ and
$u\sqrt{1+g_2(u,s)}$ respectively (these are
the branches when  $u\in\oldCa$ and large relative to $s$, and here
$g_1=-1+(1-\tfrac{3+{\ro}_1}{u+3})(1-\tfrac{{\ro}_3}{u})(1-\tfrac{{\ro}_2}{u})$), where $r_i$
are the roots of  $\tfrac13u^3+u^2+s+f$,
 $g_2=(1+\tfrac{s+f}{u^2})-1$,  $|g_{1,2}|<10\eta_1$
 and $|1+\tfrac{u}{3}|>\tfrac13$  and the estimate is
immediate.

(iii) This follows by straightforward estimates using (ii):
\begin{multline}\label{rv2}
\left|\frac{1}{\psi}-
\frac{1}{\Phi_{10}+\Phi_{20}}\right|\\
=\left|\frac{1}{\psi(\Phi_{10}+\Phi_{20})}\left(\frac{-\sigma_1v^3/3-(\tilde{f}_1(v)-\tilde{f}_1(0))}{
\Phi_{1}+\Phi_{10}}+\sigma_3\frac{-\sigma_2v^3/3-(\tilde{f}_2(v)-\tilde{f}_2(0))}{
\Phi_{2}+\Phi_{20}}\right) \right|\\
\lesssim \frac{|v|^3+|s_n v|}{\sqrt{(|v|^2+|s_n|)^3}}+\frac{|v|^3+|s_n|^{3/2}}{\sqrt{(|v|^2+|s_n|)^3}}\lesssim 1
\end{multline}
(iv) The proof is very similar: in fact, it follows by replacing $(u,s)$ by $(\um,\sm)$ and using the symmetry \eqref{sym_T} which is essentially inherited by the whole problem.
\end{proof}

\end{document}